\newtheorem{thm}{Theorem}[section]
\newtheorem{cor}[thm]{Corollary}
\newtheorem{lm}[thm]{Lemma}
\newtheorem{clm}[thm]{Claim}
\newtheorem{subclm}[thm]{Subclaim}
\theoremstyle{definition}
\newtheorem{df}[thm]{Definition}
\newtheorem{exmp}[thm]{Example}
\newtheorem{prb}[thm]{Problem}
\newtheorem{rem}[thm]{Remark}
\numberwithin{figure}{section}
\numberwithin{equation}{section}
\newcommand{\lb}{\langle} 
\newcommand{\rb}{\rangle}
\newcommand{\wec}[1]{{\mathbf{#1}}}  
\newcommand{\m}[1]{{\mathbf{\uppercase{#1}}}}
\newcommand{\vr}[1]{{\mathcal{\uppercase {#1}}}}
\newcommand{\BZ}{\mathbb{Z}} 
\newcommand{\Ho}{\mathsf{H}}
\newcommand{\Pol}{\mathop{}\mathopen{}\mathrm{Pol}}
\newcommand{\Con}{\mathop{}\mathopen{}\mathrm{Con}}
\newcommand{\Conb}{\mathop{}\mathopen{}\mathbf{Con}} 
\newcommand{\utyp}{\mathbf{1}}
\newcommand{\atyp}{\mathbf{2}}
\newcommand{\btyp}{\mathbf{3}}
\newcommand{\ltyp}{\mathbf{4}}
\newcommand{\styp}{\mathbf{5}}
\newcommand{\NC}{\mathbf{C}} 
\newcommand{\Tr}{\mathbf{Tr}} 
\newcommand{\cproof}{\noindent{\it Proof of claim.}\ } 
\newcommand{\cqed}{\hfill\rule{1.3mm}{3mm}}
\newcommand{\subcproof}{\noindent{\it Proof of subclaim.}\ } 
\begin{document}
\title[Growth Rates of Solvable Algebras]{Growth Rates of Algebras, III:\\
\vspace{2mm} {\normalsize\rm Finite Solvable Algebras}}

\author{Keith A. Kearnes}
\address[Keith Kearnes]{Department of Mathematics\\
University of Colorado\\
Boulder, CO 80309-0395\\
USA}
\email{Keith.Kearnes@Colorado.EDU}
\author{Emil W. Kiss}
\address[Emil W. Kiss]{
Lor\'{a}nd E{\"o}tv{\"o}s University\\
Department of Algebra and Number Theory\\
1117 Budapest, P\'{a}zm\'{a}ny P\'{e}ter s\'{e}t\'{a}ny 1/c\\
Hungary}
\email{ewkiss@cs.elte.hu}
\author{\'Agnes Szendrei}
\address[\'Agnes Szendrei]{Department of Mathematics\\
University of Colorado\\
Boulder, CO 80309-0395\\
USA}
\email{Agnes.Szendrei@Colorado.EDU}
\thanks{This material is based upon work supported by
the Hungarian National Foundation for Scientific Research (OTKA)
grant no.\ K83219, and K104251.
}
\subjclass{08A40 (08A55, 08B05)}
\keywords{Growth rate, solvable algebra, nilpotent algebra,
abelian algebra, 
Maltsev term, pointed cube term, tame congruence theory}
\dedicatory{Dedicated to the memory of Ervin Fried}

\begin{abstract}
  We investigate how the behavior of the function 
  $d_{\m a}(n)$, which gives the size of a least size generating
  set for~$\m a^n$, influences the structure of a finite
  solvable algebra~$\m A$.
\end{abstract}

\maketitle
\section{Introduction}\label{introduction}
The \emph{growth rate} (or the $d$-function) of a finite
algebra~$\m A$ is $d_{\m A}(n)$ = the least size of a
generating set for $\m A^n$. For a solvable group, this rate
is always linear in~$n$. On the other hand, unary algebras
(which are also solvable) have exponential growth rate. In
this paper we investigate the relationship between the
growth rate of $\m A$ and its structural properties in the
case, when $\m A$ is finite and solvable.

It turns out that stronger abelianness properties yield a
closer relationship between various growth-restricting
conditions. For example, if the variety generated by $\m A$
is abelian, or if $\m A$ is a subdirect product of simple
abelian algebras, then the growth rate is non-exponential if
and only if $\m A$ has a Maltsev term, in which case the
growth rate is linear. This result does not hold for
nilpotent (hence for solvable) algebras.

To define the abelianness properties investigated in this
paper the commutator from Chapter~3 of \cite{hobby-mckenzie}
is used. Some fluency in tame congruence theory is required
to understand the proofs. The properties are the
following:
\begin{enumerate}
\item[(1)] $\m a$ is solvable, 
\item[(2)] $\m a$ is (left) nilpotent (see \cite{sym}),
\item[(3)] $\m a$ is abelian, 
\item[(4)] $\m a$ is a subdirect product of simple abelian algebras, or
\item[(5)] $\m a$ generates an abelian variety.
\end{enumerate}

It is not hard to show from the definitions that
these properties are related by 
the implications
(4)$\Rightarrow$(3)$\Rightarrow$(2)$\Rightarrow$(1)
and
(5)$\Rightarrow$(3). 
It is also not too hard to show that 
no other implications hold except those that are 
formal consequences of these.
(To verify this latter statement, one must find examples showing that 
(1)~$\not\Rightarrow$~(2), 
(2)~$\not\Rightarrow$~(3), 
(3)~$\not\Rightarrow$~(4), 
(3)~$\not\Rightarrow$~(5), 
(4)~$\not\Rightarrow$~(5), and
(5)~$\not\Rightarrow$~(4), and for many
of these there exists an example that is a group.
In the two situations where there is no group example,
(3)~$\not\Rightarrow$~(5) and 
(4)~$\not\Rightarrow$~(5), 
the algebra
from Example~8.7 of \cite{strnil} shows that a simple abelian
algebra can generate a nonabelian variety.)

\goodbreak

Now consider the following six growth-restricting 
conditions:
\begin{enumerate}
\item[(i)]
$\m a$ has a Maltsev polynomial.
\item[(ii)]
$\m a$ has a pointed cube polynomial
(see Definition~\ref{pointed_cube}).
\item[(iii)] $\m A$ is a spread of its type~$\atyp$ minimal sets
  (see Definition~\ref{spread}).
\item[(iv)]
$d_{\m a}(n)\in O(n)$.
\item[(v)]
$d_{\m a}(n)\notin 2^{\Omega(n)}$.
\item[(vi)]
No finite power $\m a^n$ has a nontrivial
strongly abelian homomorphic image.
\end{enumerate}
For arbitrary finite algebras, 
these conditions are related by the implications
(i)$\Rightarrow$(iv),
(i)$\Rightarrow$(ii)$\Rightarrow$(v), 
and
(iii)$\Rightarrow$(iv)$\Rightarrow$(v)$\Rightarrow$(vi)
and no
implications hold that are not consequences of these,
as we show in Theorem~\ref{basic}.

In this paper we show that as we
assume stronger and stronger hypotheses from
the list (1)--(5),
the conditions (i)--(vi) gradually
become equivalent to one another.
The following theorem summarizes our main results.

\begin{thm}\label{main}
  Let~$\m A$ be a finite algebra.
  \begin{enumerate}
  \item If~$\m A$ is solvable, then (i)$\Rightarrow$(iii) by
   Corollary~\ref{kkv_cor} and (ii)$\Rightarrow$(iv) by
    Theorem~\ref{1-solv}.
  \item If~$\m A$ is left nilpotent, then
    (ii)$\Rightarrow$(i) by Theorem~\ref{nilpotent} and
    (vi)$\Rightarrow$(iii) by
    Theorem~\ref{spreadnilp}. Therefore 
    (i) and (ii) are equivalent, (iii), (iv), (v) and
    (vi) are equivalent, and the first group of equivalent
    conditions implies the second group. 
    The same implications (and no others, 
    cf.\ Example~\ref{abelian_spread_not_maltsev}) hold
    under the stronger assumption that $\m a$ is abelian.
  \item If $\m A$ is a subdirect product of simple abelian
    algebras, then (iv)$\Rightarrow$(i) by
    Theorem~\ref{simplefact}. Therefore all six conditions
    are equivalent for $\m a$.
  \item If $\m A$ generates an abelian variety, then
    (iii)$\Rightarrow$(i) by
    Theorem~\ref{spreadaff}. Therefore all six conditions
    are equivalent for $\m a$.
  \end{enumerate}
\end{thm}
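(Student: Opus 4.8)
The plan is to treat Theorem~\ref{main} as a bookkeeping consequence of the general implications recorded in Theorem~\ref{basic} together with the six ``converse-type'' results cited in its statement. First I would fix, once and for all, the diagram of implications valid for \emph{every} finite algebra, namely (i)$\Rightarrow$(iv), (i)$\Rightarrow$(ii)$\Rightarrow$(v), and (iii)$\Rightarrow$(iv)$\Rightarrow$(v)$\Rightarrow$(vi), all furnished by Theorem~\ref{basic}. Everything else in the theorem is then obtained by adjoining new arrows to this diagram under successively stronger hypotheses from the list (1)--(5), and reading off which conditions have become equivalent. I would also record at the outset that the relevant hypotheses nest: a subdirect product of simple abelian algebras, or an algebra generating an abelian variety, is abelian; an abelian algebra is left nilpotent; and a left nilpotent algebra is solvable (these are the implications (4)$\Rightarrow$(3), (5)$\Rightarrow$(3), (3)$\Rightarrow$(2), (2)$\Rightarrow$(1) noted in the introduction). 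This nesting is what makes arrows earned at a weaker level remain available at a stronger one.

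For part~(1), nothing beyond a citation is needed: Corollary~\ref{kkv_cor} supplies (i)$\Rightarrow$(iii) and Theorem~\ref{1-solv} supplies (ii)$\Rightarrow$(iv). For part~(2), since a left nilpotent algebra is solvable, part~(1) already gives the arrows (i)$\Rightarrow$(iii) and (ii)$\Rightarrow$(iv). Adjoining (ii)$\Rightarrow$(i) from Theorem~\ref{nilpotent} closes the cycle (i)$\Rightarrow$(ii)$\Rightarrow$(i), so (i) and (ii) are equivalent. Adjoining (vi)$\Rightarrow$(iii) from Theorem~\ref{spreadnilp} closes the cycle (iii)$\Rightarrow$(iv)$\Rightarrow$(v)$\Rightarrow$(vi)$\Rightarrow$(iii), so (iii), (iv), (v), (vi) are all equivalent, and (i)$\Rightarrow$(iv) shows the first block implies the second. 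Because ``abelian'' implies ``left nilpotent'', the same conclusions hold in the abelian case, and Example~\ref{abelian_spread_not_maltsev} exhibits an abelian algebra satisfying (iii)--(vi) but not (i), so the two blocks genuinely stay separate and no further collapse occurs.

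For parts~(3) and~(4) the strategy is the same collapse argument. If $\m A$ is a subdirect product of simple abelian algebras then $\m A$ is abelian, so part~(2) already yields (i)$\Leftrightarrow$(ii) and (iii)$\Leftrightarrow$(iv)$\Leftrightarrow$(v)$\Leftrightarrow$(vi), with the first block implying the second via (i)$\Rightarrow$(iv); adjoining (iv)$\Rightarrow$(i) from Theorem~\ref{simplefact} merges the two blocks, so all six conditions are equivalent. Likewise, if $\m A$ generates an abelian variety then $\m A$ is abelian, so part~(2) applies verbatim, and adjoining (iii)$\Rightarrow$(i) from Theorem~\ref{spreadaff} again merges the blocks, giving equivalence of all six conditions.

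The only real content of the present theorem is organizational, so I expect no genuine obstacle: the care required is to confirm that the cited results carry exactly the quoted hypotheses (and not something stronger), to keep the nesting of the five classes straight so that arrows are not applied outside their range of validity, and — when writing the paper as a whole — to verify that no later result among Theorems~\ref{nilpotent}, \ref{spreadnilp}, \ref{spreadaff}, \ref{simplefact}, \ref{1-solv}, Corollary~\ref{kkv_cor}, and Theorem~\ref{basic} secretly depends on Theorem~\ref{main}, so that the assembly is not circular. All of the substantive mathematics — the Maltsev and pointed-cube term constructions behind (ii)$\Rightarrow$(i), the spread analysis behind (vi)$\Rightarrow$(iii) and (iii)$\Rightarrow$(i), and the counting argument behind (iv)$\Rightarrow$(i) for simple abelian factors — is deferred to the sections where those results are proved.
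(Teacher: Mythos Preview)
Your proposal is correct and matches the paper's approach exactly: Theorem~\ref{main} is a summary statement whose content is entirely deferred to the cited results, and the paper ``proves'' it in precisely the organizational way you describe, by layering the extra arrows from Corollary~\ref{kkv_cor}, Theorems~\ref{1-solv}, \ref{nilpotent}, \ref{spreadnilp}, \ref{simplefact}, \ref{spreadaff}, and Example~\ref{abelian_spread_not_maltsev} onto the base diagram of Theorem~\ref{basic}, using the nesting (4),(5)$\Rightarrow$(3)$\Rightarrow$(2)$\Rightarrow$(1) to propagate earlier arrows to stronger hypotheses. Your care about non-circularity is well placed but unnecessary here, since none of the cited results invokes Theorem~\ref{main}.
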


These results are expressed diagrammatically in Section~\ref{summary_sec}.

Our theorem completely determines the relationships between the 
growth-restrict\-ing conditions (i)--(vi) for 
nilpotent and abelian
algebras, but some questions remain open for solvable algebras.
We do know that 
(i)$\Rightarrow$(ii)$\Rightarrow$(iv)$\Rightarrow$(v)$\Rightarrow$(vi)
and (i)$\Rightarrow$(iii)$\Rightarrow$(iv) 
for solvable algebras, and we also know that 
(iii)$\not\Rightarrow$(ii)$\not\Rightarrow$(i).
We know nothing else about the implications between
these properties for finite solvable
algebras that are not nilpotent.

The original purpose of our investigation was to show that
the $d$-function of a finite solvable algebra grows at a
linear or exponential rate. We were not able to prove or
refute this statement. We considered the other conditions in
order to better understand what might force the $d$-function
of a solvable algebra into $O(n)$ or $2^{\Omega(n)}$.  If,
for example, one could now show that (vi)$\Rightarrow$(iii)
for finite solvable algebras, then one would have that the
$d$-functions of such algebras grow linearly or
exponentially.

\section{Preliminaries}\label{quoted}

\subsection{Notation}
We use Big Oh notation.  If $f$ and $g$ are real-valued
functions defined on some subset of the real numbers, then
$f\in O(g)$ means that there are positive constants $M$ and
$N$ such that $|f(x)|\leq M|g(x)|$ for all $x>N$.  We write
$f\in \Omega(g)$ to mean that there are positive constants
$M$ and $N$ such that $|f(x)|\geq M|g(x)|$ for all $x>N$.
Finally, $f\in \Theta(g)$ means that both $f\in O(g)$ and
$f\in \Omega(g)$ hold.

\subsection{Easy Estimates}

\begin{thm}[Theorem 2.2.1 of \cite{kksz-A}]\label{basic_estimates} 
Let $\m a$ be an algebra.
\begin{enumerate}
\item[(1)] $d_{\m a^k}(n) = d_{\m a}(kn)$.
\item[(2)] If\/ $\m b$ is a homomorphic image of 
$\m a$, then $d_{\m b}(n) \leq d_{\m a}(n)$.
\item[(3)] If\/ $\m b$ is an expansion of $\m a$ (with
  operations; equivalently, if $\m a$ is a reduct of\/~$\m
  b$), then $d_{\m b}(n) \leq d_{\m a}(n)$.
\item[(4)] {\rm(From \cite{quick-ruskuc})} If\/ $\m b$ is
  the expansion of $\m a$ obtained by adjoining all
  constants, then
\[
d_{\m a}(n) - d_{\m a}(1)\leq d_{\m b}(n) \leq d_{\m a}(n). 
\]
\end{enumerate}

\kern-10pt

\qed
\end{thm}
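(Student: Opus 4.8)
The plan is to verify each of the four items directly from the definition $d_{\m a}(n)=$ the least size of a generating set for $\m a^n$, keeping in mind that a subset $X\subseteq A^n$ generates $\m a^n$ exactly when the subalgebra it generates is all of $A^n$, and that applying a term operation commutes with projections.

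For item~(1), I would observe that there is a natural bijection between $(A^k)^n$ and $A^{kn}$ that respects the term operations of $\m a^k$ (acting coordinatewise) and of $\m a$ — concretely, a $k$-tuple of elements of $A^n$ is the same thing as a $kn$-tuple of elements of $A$, grouped into $k$ blocks of length $n$, and a block of $k$ term operations of $\m a$ is exactly a single term operation of $\m a^k$. Hence a subset of $(A^k)^n$ generates $(\m a^k)^n$ iff the corresponding subset of $A^{kn}$ generates $\m a^{kn}$, and taking least cardinalities gives $d_{\m a^k}(n)=d_{\m a}(kn)$. For item~(2), if $\varphi\colon\m a\twoheadrightarrow\m b$ is onto, then $\varphi^n\colon\m a^n\twoheadrightarrow\m b^n$ is onto and is a homomorphism; the image under $\varphi^n$ of a generating set of $\m a^n$ is a generating set of $\m b^n$ of no larger cardinality, so $d_{\m b}(n)\le d_{\m a}(n)$. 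For item~(3), an expansion $\m b$ of $\m a$ has at least the term operations of $\m a$, so every generating set of $\m a^n$ is a generating set of $\m b^n$; taking least cardinalities gives $d_{\m b}(n)\le d_{\m a}(n)$.

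For item~(4), the upper bound $d_{\m b}(n)\le d_{\m a}(n)$ is the special case of~(3) in which $\m b$ is the constant expansion. For the lower bound, suppose $Y\subseteq B^n$ generates $\m b^n$ with $|Y|=d_{\m b}(n)$. Each constant operation of $\m b$ is interpreted in $\m b^n$ as one of the $|A|$ diagonal tuples $(a,a,\dots,a)$, and in the reduct $\m a^n$ these diagonal tuples are exactly the subalgebra generated by a single constant $a\in A$ — more precisely, the diagonal copy of $\m a$ sits inside $\m a^n$, and it is generated over $\m a$ by at most $d_{\m a}(1)$ elements if we instead think of generating $A$ itself. The precise bookkeeping (this is the argument of Quick–Ruskuc that the excerpt cites) is: adjoin to $Y$ a set of $d_{\m a}(1)$ tuples whose coordinates, together, generate $\m a$; then in $\m a^n$ the subalgebra generated by $Y$ together with these extra tuples contains all the diagonal tuples, hence contains all the ``constants'' of $\m b$, hence is all of $A^n$. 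This shows $d_{\m a}(n)\le d_{\m b}(n)+d_{\m a}(1)$, i.e. $d_{\m a}(n)-d_{\m a}(1)\le d_{\m b}(n)$.

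The only step requiring genuine care is the lower bound in~(4): one must be sure that the finitely many extra generators really do produce every diagonal element, and this is where the cited result of Quick and Ruskuc is invoked rather than reproved. Everything else is an immediate unwinding of the definition of $d_{\m a}$ together with the fact that homomorphisms (including projections and diagonal embeddings) carry generating sets to generating sets and that adding operations can only shrink the class of generating sets.
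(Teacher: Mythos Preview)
The paper does not prove this theorem; it is quoted from \cite{kksz-A} (with item~(4) attributed to \cite{quick-ruskuc}) and closed immediately with a \qed. So there is no proof in the paper to compare against, and your proposal should be assessed on its own merits.

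Your arguments for (1)--(3) are correct and are the standard ones: $(\m a^k)^n\cong\m a^{kn}$ as algebras, surjective homomorphisms send generating sets to generating sets, and adding operations can only enlarge generated subalgebras. Your argument for (4) is also correct, but you undersell it. You have in fact given the full proof of the lower bound: if $G\subseteq A$ generates $\m a$ with $|G|=d_{\m a}(1)$, then the diagonal tuples $\{(g,\dots,g):g\in G\}$ generate the diagonal subalgebra of $\m a^n$, which contains every constant of $\m b^n$; hence $Y$ together with these $d_{\m a}(1)$ diagonal tuples generates $\m a^n$. There is nothing further to invoke from Quick--Ru\v{s}kuc here---the step you flag as ``requiring genuine care'' (that the extra generators produce every diagonal element) is immediate from the fact that the diagonal is an isomorphic copy of $\m a$. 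So drop the hedge: your argument is complete as written.
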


\begin{thm}[From Theorem 2.2.2 of \cite{kksz-A}]\label{first_bounds} 
If $\m a$ is a finite algebra of
more than one element and $n>0$, then 
\[
\lceil \log_{|A|}(n) \rceil\leq d_{\m a}(n) \leq |A|^n. 
\]

\kern-10pt

\qed
\end{thm}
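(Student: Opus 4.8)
The plan is to establish the two inequalities separately; both are elementary counting arguments, which is why this result sits among the ``easy estimates.''

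For the upper bound, observe that the universe of $\m a^n$ is $A^n$, which has exactly $|A|^n$ elements, and any algebra is generated by the whole of its universe. Hence $\m a^n$ has a generating set of size $|A|^n$, so $d_{\m a}(n)\le |A|^n$.

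For the lower bound, suppose $X=\{g_1,\dots,g_k\}\subseteq A^n$ generates $\m a^n$; it suffices to show $n\le |A|^k$, since then $k\ge\log_{|A|}(n)$, and because $k$ is an integer this upgrades to $k\ge\lceil\log_{|A|}(n)\rceil$. To each coordinate $i\in\{1,\dots,n\}$ associate its \emph{profile} $p(i)=(g_1(i),\dots,g_k(i))\in A^k$, recording the $i$-th entries of all the generators. The key point is that term operations in $\m a^n$ act coordinatewise: if $p(i)=p(j)$ for some $i\neq j$, then for every term $t$ the element $t^{\m a^n}(g_1,\dots,g_k)$ has equal $i$-th and $j$-th entries; since $X$ generates $\m a^n$, this would force \emph{every} element of $A^n$ to agree in coordinates $i$ and $j$. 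But $|A|>1$, so $A^n$ certainly contains tuples with distinct values in any two prescribed coordinates. Therefore the map $i\mapsto p(i)$ from $\{1,\dots,n\}$ to $A^k$ is injective, giving $n\le |A|^k$ as desired. (When $n=1$ the injectivity claim is vacuous and the bound $0\le d_{\m a}(1)$ is trivial.)

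There is no genuine obstacle here; the only points needing a touch of care are the coordinatewise-action observation that makes the profile map injective, and the routine passage from $k\ge\log_{|A|}(n)$ to $k\ge\lceil\log_{|A|}(n)\rceil$ via integrality of $k$.
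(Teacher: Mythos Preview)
Your argument is correct and is the standard elementary proof of this estimate. The paper itself does not supply a proof here; it simply quotes the result from \cite{kksz-A} (Theorem~2.2.2) and marks it with a \qed, so there is nothing to compare against beyond noting that your profile-injectivity argument is exactly the expected one.
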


Recall that the \emph{free spectrum} of a variety
$\mathcal V$ is the function $f_{\mathcal V}(n):=|F_{\mathcal V}(n)|$
whose value at $n$ is the cardinality of the
$n$-generated free algebra in $\mathcal V$.

\begin{thm}[From Theorem 2.2.4 of \cite{kksz-A}]\label{spec1}
If $\m a$ is a nontrivial finite algebra and $f_{\mathcal V}$
is the free spectrum of the variety $\mathcal V = {\mathcal V}(\m a)$,
then 
\begin{enumerate}
\item[(1)] 
if $f_{\mathcal V}(n)\in O(n^k)$ for some fixed $k\in\mathbb Z^+$, 
then $d_{\m a}(n)\in 2^{\Theta(n)}$;
\item[(2)] 
if $f_{\mathcal V}(n)\in 2^{O(n)}$,
then $d_{\m a}(n)\in \Omega(n)$. \qed
\end{enumerate}
\end{thm}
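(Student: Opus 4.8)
The plan is to derive everything from one inequality relating $d_{\m a}$ to the free spectrum, and then feed in the two growth hypotheses separately.

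First I would establish the key estimate
\[
|A|^n\ \le\ f_{\mathcal V}\bigl(d_{\m a}(n)\bigr)\qquad\text{for all }n\ge 1.
\]
The argument is the standard one: writing $d=d_{\m a}(n)$, the algebra $\m a^n$ lies in $\mathcal V=\mathcal V(\m a)$ and is generated by $d$ elements, so the universal property of the free algebra $F_{\mathcal V}(d)$ provides a surjective homomorphism $F_{\mathcal V}(d)\twoheadrightarrow\m a^n$; comparing cardinalities yields $|A|^n=|A^n|\le|F_{\mathcal V}(d)|=f_{\mathcal V}(d)$. I would also record that $d_{\m a}(n)\to\infty$, since $\lceil\log_{|A|}n\rceil\le d_{\m a}(n)$ by Theorem~\ref{first_bounds}; this lets me move freely between hypotheses stated for all large $m$ and conclusions about $d_{\m a}(n)$ for all large $n$.

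For part~(2), assume $f_{\mathcal V}(m)\le 2^{cm}$ for some $c>0$ and all large $m$. For large $n$ the displayed inequality then gives $|A|^n\le 2^{c\,d_{\m a}(n)}$, hence $d_{\m a}(n)\ge\frac{\log_2|A|}{c}\,n$; since $|A|\ge2$, this is exactly $d_{\m a}(n)\in\Omega(n)$. For part~(1), the upper bound $d_{\m a}(n)\le|A|^n\in2^{O(n)}$ is immediate from Theorem~\ref{first_bounds} and needs no hypothesis. For the matching lower bound, assume $f_{\mathcal V}(m)\le Cm^k$ for some $C>0$ and all large $m$. Then for large $n$ the displayed inequality gives $|A|^n\le C\,d_{\m a}(n)^k$, so
\[
d_{\m a}(n)\ \ge\ \Bigl(\tfrac1C\Bigr)^{1/k}\,|A|^{n/k}\ =\ \Bigl(\tfrac1C\Bigr)^{1/k}\bigl(|A|^{1/k}\bigr)^{n},
\]
and because $|A|\ge2$ forces $|A|^{1/k}>1$, this is $2^{\Omega(n)}$. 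Combined with the upper bound, $d_{\m a}(n)\in2^{\Theta(n)}$.

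The proof is elementary, and I do not expect a genuine obstacle; the one place demanding a little care is precisely the step flagged above, namely justifying that hypotheses of the form ``for all sufficiently large $m$'' transfer to conclusions about $d_{\m a}(n)$ for all sufficiently large $n$ — which is valid exactly because $d_{\m a}(n)$ is unbounded.
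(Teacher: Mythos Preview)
Your argument is correct. The key inequality $|A|^n\le f_{\mathcal V}\bigl(d_{\m a}(n)\bigr)$, obtained from the surjection $F_{\mathcal V}(d)\twoheadrightarrow\m a^n$, is exactly the right tool, and both deductions from it are clean; your care about $d_{\m a}(n)\to\infty$ to transfer the ``large $m$'' hypotheses is well placed.

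Note, however, that this paper does not supply its own proof of the statement: it is quoted verbatim from Theorem~2.2.4 of \cite{kksz-A} and marked with \qed. So there is no in-paper argument to compare against. Your proof is the standard one and is almost certainly what the cited source does as well.
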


\begin{cor}[Corollary 2.2.5 of \cite{kksz-A}]\label{abelian_cor}
Let $\m a$ be a nontrivial finite algebra and let\/~$\m b$
be a nontrivial homomorphic image of $\m a^k$ for some $k$.
\begin{enumerate}
\item[(1)] 
If\/ $\m b$ is strongly abelian (or even just strongly rectangular), 
then $d_{\m a}(n)\in 2^{\Theta(n)}$.
\item[(2)] 
If\/ $\m b$ is abelian, then $d_{\m a}(n)\in \Omega(n)$. \qed
\end{enumerate}
\end{cor}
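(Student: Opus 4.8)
The plan is to deduce both statements from Theorem~\ref{spec1} applied directly to $\m b$, and then transport the resulting lower bounds back to $\m a$ using the identities of Theorem~\ref{basic_estimates}, bounding from above by Theorem~\ref{first_bounds}. Two preliminary remarks: since $\m a$ is finite and nontrivial, $\m a^k$ and its nontrivial homomorphic image $\m b$ are again finite and nontrivial, so all the quoted theorems apply to $\m b$; and $d_{\m a}$ is nondecreasing, since $\m a^n$ is a homomorphic image of $\m a^{n+1}$, whence $d_{\m a}(n)=d_{\m a^n}(1)\le d_{\m a^{n+1}}(1)=d_{\m a}(n+1)$ by parts~(1) and~(2) of Theorem~\ref{basic_estimates}.

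For~(1), the one external input needed is that a finite strongly rectangular algebra generates a variety with polynomially bounded free spectrum, i.e.\ $f_{\mathcal V(\m b)}(n)\in O(n^k)$ for some fixed $k$; this is where essentially all the real content of the part resides, and I would either cite it or establish it separately. Granting it, Theorem~\ref{spec1}(1) gives $d_{\m b}(n)\in 2^{\Theta(n)}$, so in particular $d_{\m b}(n)\in 2^{\Omega(n)}$. As $\m b$ is a homomorphic image of $\m a^k$, Theorem~\ref{basic_estimates}(2) yields $d_{\m b}(n)\le d_{\m a^k}(n)$, and Theorem~\ref{basic_estimates}(1) identifies $d_{\m a^k}(n)=d_{\m a}(kn)$; thus $d_{\m a}(kn)\in 2^{\Omega(n)}$. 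Since $k$ is a fixed constant and $d_{\m a}$ is nondecreasing, a routine rescaling (write $m=kn+r$ with $0\le r<k$ and use $d_{\m a}(m)\ge d_{\m a}(kn)$) upgrades this to $d_{\m a}(n)\in 2^{\Omega(n)}$. The matching upper bound $d_{\m a}(n)\le|A|^n\in 2^{O(n)}$ is immediate from Theorem~\ref{first_bounds}, so $d_{\m a}(n)\in 2^{\Theta(n)}$. Note that ``strongly rectangular'' is weaker than ``strongly abelian'', so this part genuinely requires its own free-spectrum estimate and is not subsumed by part~(2).

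For~(2), the analogous input is that a finite abelian algebra generates a variety with $f_{\mathcal V(\m b)}(n)\in 2^{O(n)}$. Then Theorem~\ref{spec1}(2) gives $d_{\m b}(n)\in\Omega(n)$, and exactly the same transfer as above---$d_{\m b}(n)\le d_{\m a^k}(n)=d_{\m a}(kn)$ together with the monotonicity of $d_{\m a}$---gives $d_{\m a}(n)\in\Omega(n)$. The only steps with any subtlety are the two free-spectrum estimates, drawn from the structure theory of strongly rectangular and of abelian algebras; the polynomial bound in the strongly rectangular case is the main obstacle, while everything else is Big-Oh bookkeeping resting on Theorems~\ref{basic_estimates} and~\ref{first_bounds}.
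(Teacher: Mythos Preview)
Your approach is correct and is exactly the intended one: the statement is quoted in this paper without proof (the \qed marks it as imported from \cite{kksz-A}), but its numbering there as Corollary~2.2.5 immediately after Theorem~2.2.4 (our Theorem~\ref{spec1}) signals precisely the derivation you give---feed the free-spectrum bounds for strongly rectangular and abelian algebras into Theorem~\ref{spec1}, then pull back along Theorem~\ref{basic_estimates}. Your handling of the rescaling from $kn$ to $n$ via monotonicity of $d_{\m a}$ is clean, and you correctly flag the two free-spectrum estimates (polynomial for strongly rectangular, $2^{O(n)}$ for abelian, the latter going back to \cite{berman_mck}) as the only substantive external inputs.
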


\begin{thm}[Theorem 2.2.6 of \cite{kksz-A}]\label{affine_prep}
If $\m a^2$ is a finitely generated affine algebra, 
then $d_{\m a}(n)\in O(n)$. If, moreover, $\m a$ is finite
and has more than one element, then 
$d_{\m a}(n)\in \Theta(n)$. 
\qed
\end{thm}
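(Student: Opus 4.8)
The plan is to prove the two conclusions separately. The lower bound is cheap: an affine algebra is abelian, so when $\mathbf{A}$ is finite with $|A|>1$ it is itself a nontrivial abelian homomorphic image of $\mathbf{A}^1$, and Corollary~\ref{abelian_cor}(2) gives $d_{\mathbf{A}}(n)\in\Omega(n)$; combined with the upper bound this yields $d_{\mathbf{A}}(n)\in\Theta(n)$. All the work is in the upper bound $d_{\mathbf{A}}(n)\in O(n)$, which I would prove by exhibiting the module structure hidden in $\mathbf{A}$ and counting module generators of its powers. When $\mathbf{A}$ is finite this is essentially immediate, since an affine algebra is polynomially equivalent to a module and a finite module is finitely generated; but the hypothesis only asserts that $\mathbf{A}^2$ is \emph{finitely generated} and affine, not finite, so some care is needed.

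First I would reduce to the claim that every finitely generated affine algebra has $d$-function in $O(n)$: if $\mathbf{A}^2$ is finitely generated and affine, then $\mathbf{A}$ is finitely generated (being a homomorphic image of $\mathbf{A}^2$) and affine (being $\cong$ the diagonal subalgebra of $\mathbf{A}^2$, since abelianness and the existence of a Maltsev term both pass to subalgebras), and $d_{\mathbf{A}}(2n)=d_{\mathbf{A}^2}(n)$ by Theorem~\ref{basic_estimates}(1). So let $\mathbf{A}$ be finitely generated and affine, generated by a finite set $B$, fix an element $0\in A$, and let $\mathbf{A}_0$ be $\mathbf{A}$ with $0$ adjoined as a constant. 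Then $\mathbf{A}_0$ is again finitely generated and affine, the abelian group operations $+$ and $-$ (built from the Maltsev term and $0$) are term operations of $\mathbf{A}_0$, and $d_{\mathbf{A}}(n)\le d_{\mathbf{A}_0}(n)+d_{\mathbf{A}}(1)$ by Theorem~\ref{basic_estimates}(3),(4). Thus it suffices to bound $d_{\mathbf{A}_0}$.

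Next I would pin down the right ring: let $R$ be the unital subring generated by the ``linear parts'' $x\mapsto g(0,\dots,x,\dots,0)-g(0,\dots,0)$ of the basic operations $g$ of $\mathbf{A}_0$, taken inside the ring of those unary term operations of $\mathbf{A}_0$ that fix $0$ (composition as multiplication, pointwise $+$ and $-$). Then $M=(A;+,-,0)$ with this $R$-action is an $R$-module and a reduct of $\mathbf{A}_0$, so $d_{\mathbf{A}_0}(n)\le d_{M}(n)$ by Theorem~\ref{basic_estimates}(3), and the crux is that $M$ is a \emph{finitely generated} $R$-module. By an easy induction on term complexity (the linear part of a composite is a sum of products of linear parts of the constituents, because all operations of $\mathbf{A}$ are affine) every term operation $t$ of $\mathbf{A}_0$ has the form $t(x_1,\dots,x_k)=\sum_i r_ix_i+t(0,\dots,0)$ with each $r_i\in R$. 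Hence the subalgebra of $\mathbf{A}_0$ generated by $B$, which is all of $A$, lies in $N+A_0$, where $N$ is the $R$-submodule generated by $B$ and $A_0$ is the subalgebra generated by $\{0\}$. Finally $A_0$ is itself a finitely generated $R$-submodule of $M$: it is an $R$-submodule because $R$ consists of term operations of $\mathbf{A}_0$, and it is generated by the finitely many constants $g(0,\dots,0)$ ($g$ basic), since the submodule $P$ they generate contains $0$ and is closed under every basic operation $g$ (as $g(\bar p)=\sum_i r_i^{g}p_i+g(0,\dots,0)\in P$), hence contains the subalgebra generated by $\{0\}$. Therefore $M=N+A_0$ is finitely generated, say by $k$ elements.

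From here the conclusion is routine: $M^n$ is generated as an $R$-module --- equivalently, since the operations of $M$ are exactly the $R$-module operations, as an algebra --- by the $kn$ coordinate translates of these $k$ generators, so $d_{M}(n)\le kn$, and chaining the inequalities gives $d_{\mathbf{A}}(n)\le d_{\mathbf{A}_0}(n)+d_{\mathbf{A}}(1)\le kn+d_{\mathbf{A}}(1)\in O(n)$. I expect the main obstacle to be exactly the finite generation of $M$: polynomial equivalence does not transport ``finitely generated as an algebra'' to a module, because the term clone and the polynomial clone can differ in their nullary part, so one cannot simply say ``a finitely generated affine algebra is polynomially equivalent to a finitely generated module.'' Getting this right is what forces the passage to $\mathbf{A}_0$, the choice of the small ring $R$ generated by the linear parts of the basic operations (where the finite-signature convention is used), and the separate bookkeeping for the constant term $t(0,\dots,0)$, i.e.\ for the $1$-generated subalgebra $A_0$; everything else is bookkeeping with the easy estimates of Section~\ref{quoted}.
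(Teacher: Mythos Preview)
This theorem is quoted from \cite{kksz-A} and carries only a \qed\ here, so there is no proof in the present paper to compare against. Your module-theoretic argument is the natural one and is correct once a finite-signature assumption is in force, which you do flag. There is, however, a real gap without that assumption, and it lies precisely in your reduction step: the intermediate claim ``every finitely generated affine algebra has $d$-function in $O(n)$'' is false in general. Take $V=\bigoplus_{i\in\mathbb N}\mathbb Z$ and let $\m A$ have universe $V$ with operations $+$, $-$, and one nullary operation for each standard basis vector $e_i$. Then $\m A$ is affine and generated by the empty set, but $\m A^2$ is not finitely generated (the subalgebra of $\m A^2$ generated by any finite $F$ is the subgroup of $V^2$ generated by $F$ together with the diagonal constants $(e_i,e_i)$; modulo the diagonal this is a finitely generated subgroup of $V$, hence proper), so $d_{\m A}(2)$ is not finite. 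Your proof of the claim breaks at exactly the point you anticipate: with infinitely many basic operations the set $\{g(0,\dots,0):g\text{ basic}\}$ need not be finite, and $A_0$ need not be a finitely generated $R$-module.

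The hypothesis on $\m A^2$, rather than on $\m A$, is exactly what repairs this. Keep your ring $R$ and the decomposition $t(\bar x)=\sum_i r_ix_i+t(\bar 0)$ of term operations of $\m A_0$. If $G$ is a finite generating set of $\m A^2$ (hence of $\m A_0^2$), then for each $a\in A$ write $(a,0)=\sum_i r_i(g_i',g_i'')+(c,c)$ with $(g_i',g_i'')\in G$, $r_i\in R$, and $c=t(\bar 0)$; the second coordinate forces $c=-\sum_i r_ig_i''$, and then the first gives $a=\sum_i r_i(g_i'-g_i'')$. Thus $M$ is generated as an $R$-module by the finite set $\{g'-g'':(g',g'')\in G\}$, with no signature restriction, and your endgame $d_{\m A_0}(n)\le d_M(n)\le |G|\, n$ goes through unchanged. (Alternatively, a Maltsev term is a $0$-pointed $2$-cube term, so Corollary~\ref{pointed_polynomial_cor} with $k=2$ gives the upper bound directly from the hypothesis that $\m A^2$ is finitely generated; but in \cite{kksz-A} that corollary is numbered later than the present theorem, so the intended proof is presumably the direct module argument.) Your lower bound via Corollary~\ref{abelian_cor}(2) is fine.
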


\subsection{Basic relationships among conditions (i)--(vi)}

Recall that a polynomial $F(x,y,z)$ of $\m a$ is a \emph{Maltsev polynomial}
if $F(x,y,y)\approx x \approx F(y,y,x)$ holds in $\m a$.
In \cite{kksz-A}, this well known concept is generalized
to the following:

\begin{df}\label{pointed_cube}
A term $F(x_1,\ldots,x_m)$ is
an \emph{$m$-ary, $p$-pointed, $k$-cube term} for $\m a$
if there is a $k\times m$ matrix $M$
consisting of variables and $p$ distinct constant symbols,
with every column of $M$ containing a symbol different 
from $x$, such that $\m a$ satisfies
\begin{equation}\label{cube}
F(M)\approx 
\left(
\begin{matrix}
x\\
\vdots\\
x
\end{matrix}
\right).
\end{equation}
\end{df}

For example, 
\[
{F}\left(\begin{matrix}
x&y&y\\
y&y&x
\end{matrix}
\right) \approx
\left(\begin{matrix}
x\\
x
\end{matrix}
\right),
\]
is a way to express 
that $F$ is a Maltsev term in the form (\ref{cube})
with $m=3$, $p=0$ and $k=2$.

The main results from \cite{kksz-A}
about the restriction on growth
imposed by a pointed cube term
now follow.

\begin{thm}[Theorem 5.2.1 of \cite{kksz-A}]\label{pointed_polynomial}
  Let $\m a$ be an algebra with an $m$-ary, $p$-pointed,
  $k$-cube term, with at least one constant symbol appearing
  in the cube identities (that is, $p\geq 1$). If\/ $\m
  a^{p+k-1}$ is finitely generated, then all finite powers
  of $\m a$ are finitely generated and $d_{\m a}(n)$ is
  bounded above by a polynomial of degree at most
  $\log_w(m)$, where $w = 2k/(2k-1)$.  \qed
\end{thm}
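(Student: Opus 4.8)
The plan is to run a divide‑and‑conquer argument on the exponent $n$, expressing a small generating set of $\m a^n$ through generating sets of several proper powers of $\m a$ by means of the cube identities, and then to solve the resulting recursion.

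First I would eliminate the constant symbols. Let $c_1,\dots,c_p\in A$ be the interpretations of the $p$ constant symbols occurring in $M$, and pass to the expansion $\m a^{+}=(\m a,c_1,\dots,c_p)$ obtained by adjoining these as nullary operations. By Theorem~\ref{basic_estimates}(3),(4) one has $d_{\m a^{+}}(n)\le d_{\m a}(n)\le d_{\m a^{+}}(n)+d_{\m a}(1)$, so it suffices to bound $d_{\m a^{+}}$; moreover $(\m a^{+})^{p+k-1}$ has the same universe as $\m a^{p+k-1}$ together with more operations, hence is again finitely generated, say by a finite set $H$, and any finitely generated power of $\m a^{+}$ yields (by adding the constant tuples) a finitely generated power of $\m a$. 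From now on $F$ is a genuine $m$‑ary term of $\m a^{+}$, the entries of $M$ are the variable $x$, one of finitely many other variables, or one of the nullary operations $c_l$, and the $k$ row identities $F(M_{r,1},\dots,M_{r,m})\approx x$ ($1\le r\le k$) hold in $\m a^{+}$.

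The heart of the argument is a gluing lemma of the following shape. Fix an index set $I$ with $|I|=n$, choose a partition $I=I_1\sqcup\cdots\sqcup I_k$ into $k$ blocks (one per row of $M$), and for each column $j$ pick a row $\rho(j)$ in which the $j$‑th column of $M$ carries a symbol different from $x$. One checks, directly from the row identities, that every $t\in A^{I}$ can be written as $t=F(a^{(1)},\dots,a^{(m)})$, where the $j$‑th argument $a^{(j)}$ may be taken to agree with a \emph{fixed} pattern on $I_{\rho(j)}$ and therefore depends only on the restriction of $t$ to $I\setminus I_{\rho(j)}$; in fact $a^{(j)}$ ranges, as $t$ varies, over a ``constant‑padded'' copy of a power $\m a^{X_j}$ with $X_j\subseteq I\setminus I_{\rho(j)}$. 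Up to the (harmless but delicate) point that a generating set of $\m a^{K}$ need not generate the constant‑padded copy of $\m a^{K}$ sitting inside a larger power — which I would repair by letting the $m$ subproblems share a common block of coordinates and padding there with suitable elements of the image of $H$ rather than with bare constants — generating $\m a^{I}$ thus reduces to generating the $m$ powers $\m a^{I\setminus I_{\rho(j)}}$ plus boundedly many extra tuples (the bound depending only on $k,m,p,|H|$). Balancing the blocks as evenly as the shared overlap permits keeps each of these smaller exponents at most $\tfrac{2k-1}{2k}\,n$, so with $T(n):=d_{\m a^{+}}(n)$ we obtain, for $n>p+k-1$, a recursion of the form
\[
T(n)\ \le\ m\cdot T\!\left(\tfrac{2k-1}{2k}\,n\right)+\gamma ,
\]
with base case $T(n)\le|H|$ for $n\le p+k-1$ furnished by the hypothesis; since the recursion also exhibits finite generating sets of powers in terms of finite generating sets of smaller powers, induction shows that all finite powers of $\m a$ are finitely generated, and unwinding the recursion with $w=2k/(2k-1)$ gives $T(n)\in O\!\left(n^{\log_w m}\right)$, hence $d_{\m a}(n)\in O\!\left(n^{\log_w m}\right)$.

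The step I expect to be the main obstacle is the gluing lemma itself: reconciling ``$a^{(j)}$ lives in a proper power'' with the fact that padding generators of $\m a^{K}$ by constants need not generate $\m a^{K}$ inside $\m a^{K\cup L}$, and then tracking precisely how large the shared overlap must be for the $m$ pieces to fit together coherently. Getting this bookkeeping right is what produces the factor $2k$ (rather than $k$) in the exponent $w=2k/(2k-1)$, and it is also where the base exponent $p+k-1$ enters: roughly, the $p$ constant columns of $M$ require $p$ coordinates in order to be realized simultaneously, and the $k$ row identities contribute the remaining $k-1$.
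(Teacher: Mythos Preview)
This theorem is quoted from \cite{kksz-A} and is not proved in the present paper; the statement ends with a bare \qed. So there is no proof here to compare against.

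That said, your overall strategy --- pass to the constant expansion, use the $k$ row identities to run a divide-and-conquer on the exponent $n$, and solve the resulting recursion $T(n)\le m\,T\bigl(\tfrac{2k-1}{2k}n\bigr)+O(1)$ --- is the natural approach and almost certainly the one taken in \cite{kksz-A}. You have also correctly located the real difficulty: a generating set of $\m a^{K}$ padded by constants on $L$ need not generate the corresponding ``cylinder'' inside $\m a^{K\cup L}$, so the naive partition into $k$ disjoint blocks (which would give the better ratio $(k-1)/k$) does not work as stated.

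What is missing is a concrete mechanism that actually realizes the ratio $(2k-1)/(2k)$. Your description --- ``letting the $m$ subproblems share a common block of coordinates and padding there with suitable elements of the image of $H$'' --- is a plausible sketch, but you have not explained (a) what exactly lives on the shared block so that the $m$ padded cylinders are simultaneously generated, (b) why the shared block can be taken of size roughly $n/2$ (which is what forces the remaining $k$ blocks to have size $n/(2k)$ and yields your ratio), or (c) where precisely the hypotheses $p\ge 1$ and ``$\m a^{p+k-1}$ is finitely generated'' enter. Your closing paragraph acknowledges this is the crux but does not resolve it; until that bookkeeping is written down, the argument is a believable outline rather than a proof.
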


\begin{cor}[Corollary 5.2.4 of \cite{kksz-A}]\label{pointed_polynomial_cor}
If $\m a^k$ is a finitely generated algebra with a $0$-pointed
or $1$-pointed $k$-cube term, then $d_{\m a}(n)\in O(n^{k-1})$. 
\qed
\end{cor}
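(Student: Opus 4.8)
The plan is to construct, for each $n$, a generating set of $\m a^n$ of size $O(n^{k-1})$ --- which immediately gives $d_{\m a}(n)\in O(n^{k-1})$ --- by producing a \emph{compact representation} of $\m a^n$ in the style of few-subpowers theory. I would first settle finite generation of the powers: when $p=1$ this is already part of the conclusion of Theorem~\ref{pointed_polynomial} (applied with $\m a^{p+k-1}=\m a^k$ finitely generated), and when $p=0$ one notes that $\m a$ is a homomorphic image of $\m a^k$ under a coordinate projection, hence finitely generated, so that the finite set produced below witnesses finite generation of $\m a^n$ directly. It is convenient to replace the given $0$-pointed $k$-cube term by an \emph{edge term of arity $k+1$} (a $(k+1)$-ary term satisfying the $k$ standard edge-term identities), which is interderivable with it; alternatively one can argue throughout from the cube matrix $M$ of Definition~\ref{pointed_cube} directly.

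The core is the construction of the generating set $G\subseteq\m a^n$. Fix a linear order on the coordinate set $\{1,\dots,n\}$. Then $G$ consists of finitely many ``coordinatewise'' tuples --- for each coordinate $i$, a few tuples whose $i$-th entry ranges over a fixed generating set of $\m a$ --- together with a family of canonically chosen \emph{witnesses}: tuples in $\m a^n$ that, for each $(k-1)$-element ``frame'' of coordinates and each partial tuple realized on that frame, realize it while being extremal (say lexicographically largest) on the remaining coordinates. The delicate combinatorial point is that this can be organized so that only $O(n^{k-1})$ witnesses are needed --- there are $\Theta(n^{k-1})$ frames of size $k-1$ and, when $\m a$ is finite, only boundedly many partial tuples to witness per frame --- whereas indexing witnesses naively by a frame \emph{together with} a target coordinate and value would cost $O(n^k)$. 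For $\m a$ infinite but finitely generated one replaces ``values in $A$'' by the finitely many profiles over a fixed generating set, and the exponent $k-1$ is unaffected.

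It remains to show $\langle G\rangle=\m a^n$, and this is where the real work lies. Given $\mathbf b\in\m a^n$, one shows $\mathbf b\in\langle G\rangle$ by induction along the coordinate order: keeping an element $\mathbf c\in\langle G\rangle$ that agrees with $\mathbf b$ on an initial segment, one applies the arity-$(k+1)$ edge term to $\mathbf c$, to a witness that shares the relevant $(k-1)$-frame of values with $\mathbf c$ on the already-fixed coordinates, and to the coordinatewise tuples, so as to correct the value at the next coordinate while the edge-term identities keep every already-fixed coordinate fixed. The main obstacle is arranging that a canonically chosen witness really does agree with the current $\mathbf c$ on all the already-fixed coordinates; this is the ``pivot/fork'' bookkeeping at the technical core of few-subpowers theory, and it is exactly why the witnesses must be selected by an extremal rule rather than arbitrarily. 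Finally, the case $p=1$ is handled by the same argument, carrying the single distinguished constant $c$ as a value available at every coordinate: a $1$-pointed $k$-cube term need \emph{not} yield a $0$-pointed one, so there is no reduction to the $p=0$ case, but one extra constant does not raise the dimension of the representation, so the bound stays $O(n^{k-1})$; and if $c$ is not already a basic operation of $\m a$, Theorem~\ref{basic_estimates}(4) shows that adjoining it changes $d_{\m a}(n)$ only by the additive constant $d_{\m a}(1)$, which is absorbed.
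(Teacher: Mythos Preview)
This corollary is imported from the companion paper \cite{kksz-A} and is stated here with a \qed\ and no argument, so there is no proof in the present paper to compare against directly. Your strategy for the $p=0$ case --- trade the $0$-pointed $k$-cube term for a $(k{+}1)$-ary $k$-edge term and run the compact-representation/fork machinery of few-subpowers theory to extract an $O(n^{k-1})$ generating set of $\m a^n$ --- is the natural route and is correct for finite $\m a$; you also correctly flag the extremal-witness bookkeeping as the place where the real work hides.

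There is, however, a genuine gap in your $p=1$ case. You rightly note that a $1$-pointed $k$-cube term need not produce a $0$-pointed one (for $k=2$, a binary term with a two-sided unit is $1$-pointed but yields no Maltsev term), so no $k$-edge term is available. But your inductive correction step, as you describe it, \emph{applies the edge-term identities} to freeze the already-fixed coordinates while adjusting the next one. Without an edge term you have not said what identities drive that step. One can indeed make a parallel argument work --- substitute the constant $c$ for all auxiliary variables in the cube matrix to obtain row identities in $\{x,c\}$ only, and correct coordinates against the diagonal tuple $(c,\dots,c)$ rather than against an arbitrary current value --- but this is a different induction from the edge-term one, and the sentence ``one extra constant does not raise the dimension of the representation'' does not supply it. The issue is not the witness \emph{count} (which is clearly still $O(n^{k-1})$) but the correction \emph{mechanism}.

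A second, smaller gap: your extension to infinite $\m a$ (``replace values in $A$ by profiles over a fixed generating set'') is too vague to evaluate. In the standard fork argument the witnesses are indexed by tuples of actual values on the $(k{-}1)$-frames; with $|A|$ infinite you must say precisely what finite data replaces this and why the same correction step still succeeds. Since the corollary only assumes $\m a^k$ finitely generated, not $\m a$ finite, this case is part of the statement and cannot be waved past.
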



\begin{df}\label{pointed_cube_pol}
An $m$-ary, $p$-pointed, $k$-cube term for the constant expansion of $\m a$
is called an \emph{$m$-ary, $p$-pointed, $k$-cube polynomial} for $\m a$. 
\end{df}

\begin{rem}
An algebra has a Maltsev polynomial (or a pointed cube polynomial)
if and only if its constant expansion has a Maltsev term
(or pointed cube term).
Passing to the constant expansion does not affect
the growth rate significantly, as noted in 
Theorem~\ref{basic_estimates}~(4).
Therefore, the last two 
results hold for 
polynomials as well as for terms. 
In fact, replacing ``term'' with ``polynomial''
in Theorem~\ref{pointed_polynomial} and
Corollary~\ref{pointed_polynomial_cor}
yields correct statements. 

In the literature, a $0$-pointed cube term is called a
cube-term. Note that a $p$-pointed cube term is not
necessarily a cube-term even if all constants are unary
terms.
\end{rem}

\begin{thm}[From Theorem 5.4.1 of \cite{kksz-A}]\label{avoid}
Let $\m a$ be an algebra with $|A|>1$ whose
$d$-function assumes only finite values.
There
is an algebra $\m b$ such that $d_{\m b}(n) = d_{\m a}(n)$ for all $n$, 
where $\m b$ does not have a pointed cube polynomial.
\qed
\end{thm}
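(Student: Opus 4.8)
First, if $\m a$ already has no pointed cube polynomial we may take $\m b=\m a$, so assume that it does. The key point is that $d_{\m a}(n)$ depends only on which finite subsets of $\m a^n$ fail to generate $\m a^n$; thus it suffices to produce an algebra $\m b$ which has no pointed cube polynomial and whose finite powers have exactly the same minimum generating numbers as the finite powers of $\m a$. So the plan is to perturb $\m a$ into such a $\m b$: to find $\m b$ whose polynomial clone is a modification of $\Pol(\m a)$ that is mild enough to leave the generation behaviour of every $\m a^n$ untouched, yet destroys every instance of the cube identity~\eqref{cube}, pointed or not.

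One natural way to attempt this is to enlarge the universe. I would let the universe of $\m b$ be $A$ together with finitely many new elements, and define the basic operations so that: (i) $A$ is a subuniverse on which $\m b$ induces $\m a$, and $\m a$ is a homomorphic image of $\m b$; (ii) the new elements form a ``rigid'' part --- each operation of $\m b$, read off on the coordinates that record the new part, behaves as a projection or as a constant; and (iii) on arguments involving the new elements the operations never produce new elements of $A$, so that for every $n$ and every finite $X\subseteq B^n$ the subalgebra generated by $X$ in $\m b^n$ meets $A^n$ in precisely the subalgebra generated in $\m a^n$ by $X\cap A^n$, its remaining elements being completely determined by $X$. The rigid part must be kept non-abelian-looking: should some power $\m b^n$ acquire a nontrivial strongly abelian quotient, Corollary~\ref{abelian_cor} would force $d_{\m b}$ to be exponential, which is unaffordable when $d_{\m a}$ grows slowly. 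This is the heart of the matter --- a direct factor, even a two-element set, would both block cube terms and wreck the $d$-function --- so the rigid part must be glued to $\m a$ far more delicately than by a direct product.

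Granting such a $\m b$, the two halves of the equation $d_{\m b}(n)=d_{\m a}(n)$ come from (i) and (iii): since $\m a^n$ is a homomorphic image of $\m b^n$, Theorem~\ref{basic_estimates} gives $d_{\m a}(n)\le d_{\m b}(n)$, while by (iii) a minimum generating set of $\m a^n$, with the new coordinates of its members chosen appropriately, still generates $\m b^n$ using only $d_{\m a}(n)$ elements, giving $d_{\m b}(n)\le d_{\m a}(n)$. That $\m b$ has no pointed cube polynomial follows from (ii): in any instance of~\eqref{cube} for the constant expansion of $\m b$, reading off a coordinate recording the new part forces the restriction of $F$ there to be a projection onto some coordinate $i$ --- a constant is impossible, since the right-hand side varies with $x$ while the new part has more than one element, which is where $|A|>1$ enters --- and this in turn forces the entire $i$th column of the matrix $M$ to consist of the symbol $x$, contradicting the requirement that every column of $M$ contain a symbol other than $x$. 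The step I expect to be genuinely delicate is the inequality $d_{\m b}(n)\le d_{\m a}(n)$: it is precisely the assertion that the rigid part contributes nothing to, and costs nothing in, the generation of $\m b^n$, and verifying it requires exploiting the exact manner in which the new structure has been attached to $\m a$.
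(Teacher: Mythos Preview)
This theorem is not proved in the present paper: it is quoted from \cite{kksz-A} (Theorem~5.4.1 there) and closed with a bare \qed. There is therefore no argument here against which to compare yours.

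On its own terms, what you have written is a plan rather than a proof. You correctly isolate what is needed --- an algebra $\m b$ whose powers have the same minimum generating numbers as those of $\m a$ and whose polynomial clone admits no pointed cube identity --- and you propose to achieve the second requirement by adjoining a rigid piece on which every polynomial restricts to a projection. The argument that such a piece defeats every instance of~\eqref{cube} is correct. But the construction itself is never exhibited: you list properties (i)--(iii) that $\m b$ ought to enjoy and then reason from them, without producing any $\m b$ that does enjoy them. You are candid that the crux is the inequality $d_{\m b}(n)\le d_{\m a}(n)$, and that a direct product with a rigid factor fails spectacularly because such a factor has exponential $d$-function; but you stop short of saying what the gluing actually is. Until an explicit construction is on the table and property~(iii) is verified for it, this remains a programme, not a proof.
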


For the other concepts appearing in the conditions (i)--(vi),
we direct the reader to Definition~\ref{spread} for 
``spread'' and to \cite{hobby-mckenzie} for
``minimal set'' and ``strongly abelian''.

We can now establish which implications between 
conditions (i)--(vi) from the Introduction
hold for all finite algebras.

\begin{thm}\label{basic}
For arbitrary finite algebras, the conditions
\begin{enumerate}
\item[(i)]
$\m a$ has a Maltsev polynomial.
\item[(ii)]
$\m a$ has a pointed cube polynomial.
\item[(iii)] $\m A$ is a spread of its type~$\atyp$ minimal sets.
\item[(iv)]
$d_{\m a}(n)\in O(n)$.
\item[(v)]
$d_{\m a}(n)\notin 2^{\Omega(n)}$.
\item[(vi)]
No finite power $\m a^n$ has a nontrivial
strongly abelian homomorphic image.
\end{enumerate}
are related by the implications
\[
(i)\Rightarrow(iv),\quad
(i)\Rightarrow(ii)\Rightarrow(v), \quad
\textrm{and}\quad
(iii)\Rightarrow(iv)\Rightarrow(v)\Rightarrow(vi).
\]
The following non-implications 
can be established with finite counterexamples:
\[
(vi)\not\Rightarrow
(v)\not\Rightarrow
(iv)\not\Rightarrow
(iii)\not\Rightarrow
(ii)\not\Rightarrow
(iv)\not\Rightarrow
(ii)
\quad
\textrm{and}\quad
(i)\not\Rightarrow
(iii).
\]

These facts imply that the implications that hold are 
only those indicated in
the following diagram and their consequences:
\bigskip

\begin{center}
\begin{picture}(200,60)
\setlength{\unitlength}{1mm}

\put(0,0){$(iii)$}
\put(10,0){$\Longrightarrow$}
\put(20,0){$(iv)$}
\put(30,0){$\Longrightarrow$}
\put(40.5,0){$(v)$}
\put(50,0){$\Longrightarrow$}
\put(60,0){$(vi)$.}
\put(22,10){\rotatebox[origin=c]{270}{$\Longrightarrow$}}
\put(21,20){$(i)$}
\put(30,20){$\Longrightarrow$}
\put(40,20){$(ii)$}
\put(42,10){\rotatebox[origin=c]{270}{$\Longrightarrow$}}
\end{picture}
\end{center}

\medskip

\end{thm}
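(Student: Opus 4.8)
The plan is to dispose of the implications first and then gather finite counterexamples for the non-implications, citing the easy estimates of Section~2 throughout.

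\medskip

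\noindent\textbf{The positive implications.} I would handle the six arrows in the stated order. For $(i)\Rightarrow(iv)$: a Maltsev polynomial is a $0$-pointed (hence in particular $1$-pointed) $2$-cube polynomial, so if $\m a$ is finite then $\m a^2$ is finitely generated and Corollary~\ref{pointed_polynomial_cor} gives $d_{\m a}(n)\in O(n^{2-1})=O(n)$. For $(i)\Rightarrow(ii)$: trivial, since a Maltsev polynomial \emph{is} a pointed cube polynomial. For $(ii)\Rightarrow(v)$: if $\m a$ has an $m$-ary $p$-pointed $k$-cube polynomial then, $\m a$ being finite, Theorem~\ref{pointed_polynomial} (for $p\geq 1$) or Corollary~\ref{pointed_polynomial_cor} (for $p=0$) bounds $d_{\m a}(n)$ above by a polynomial in $n$, so $d_{\m a}(n)\notin 2^{\Omega(n)}$. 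For $(iii)\Rightarrow(iv)$: this is where I expect to lean on the ``spread'' machinery (Definition~\ref{spread}); the idea is that if $\m a$ is assembled in a controlled way from its type~$\atyp$ minimal sets (which carry affine-like structure), one shows $\m a^2$ is essentially a finitely generated affine-type object and invokes Theorem~\ref{affine_prep}, or else bootstraps a bound on generators of $\m a^n$ directly from the minimal-set decomposition. For $(iv)\Rightarrow(v)$: immediate, since $O(n)$ is disjoint from $2^{\Omega(n)}$. For $(v)\Rightarrow(vi)$: contrapositive — if some $\m a^n$ has a nontrivial strongly abelian homomorphic image $\m b$, then by Corollary~\ref{abelian_cor}(1) we get $d_{\m a}(m)\in 2^{\Theta(m)}$, contradicting $(v)$.

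\medskip

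\noindent\textbf{The non-implications.} Each requires a single finite algebra. For $(i)\not\Rightarrow(iii)$: a finite algebra with a Maltsev term but with no type~$\atyp$ minimal sets decomposition of the required ``spread'' form; a finite module, or a small group that is not abelian, should fail $(iii)$ while trivially satisfying $(i)$. For $(ii)\not\Rightarrow(iv)$ (appearing twice, once in each chain of the displayed non-implications): an algebra with a pointed cube polynomial but whose $d$-function is not linear — e.g.\ one with a genuine $k$-cube term for $k\geq 3$ and no $2$-cube term, so the polynomial bound has degree $>1$; a finite algebra realizing the bound of Corollary~\ref{pointed_polynomial_cor} with $k=3$. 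For $(iii)\not\Rightarrow(ii)$: a spread of type~$\atyp$ minimal sets having no pointed cube polynomial at all; by Theorem~\ref{avoid} one can modify any example of an algebra satisfying $(iii)$ and $(iv)$ to destroy all pointed cube polynomials while preserving the $d$-function (one must check $(iii)$ survives this modification, or build the example by hand). For $(iv)\not\Rightarrow(iii)$: an algebra with linear $d$-function that is not a spread of its type~$\atyp$ minimal sets — again Theorem~\ref{avoid} applied to a Maltsev example kills $(ii)$, hence kills anything implying $(ii)$; but for $(iii)$ one needs a separate verification, so I would exhibit a concrete small algebra. For $(v)\not\Rightarrow(iv)$: an algebra whose $d$-function grows polynomially of degree $\geq 2$ (so not in $2^{\Omega(n)}$, not in $O(n)$) — the same $3$-cube-term example as above serves. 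For $(vi)\not\Rightarrow(v)$: a finite algebra no power of which has a nontrivial strongly abelian image, yet with $d_{\m a}(n)\in 2^{\Omega(n)}$ — a finite algebra that is abelian but not strongly abelian and lacks a Maltsev term; by Corollary~\ref{abelian_cor}(1) versus (2) one wants exponential growth without strong rectangularity in any power, which a carefully chosen finite nilpotent (non-abelian) algebra, or an example from \cite{strnil}, provides.

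\medskip

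\noindent\textbf{Main obstacle.} The routine parts are the four ``formal'' arrows and the reductions to the quoted corollaries; the real work is (a) the implication $(iii)\Rightarrow(iv)$, which needs the actual content of the spread/minimal-set analysis and the affine estimate, and (b) assembling \emph{finite} witnesses for the non-implications — in particular checking that the $d$-function-preserving construction of Theorem~\ref{avoid} can be arranged to retain property $(iii)$ where needed, and locating a finite abelian-but-not-strongly-abelian algebra with provably exponential growth for $(vi)\not\Rightarrow(v)$. Once those witnesses are in hand, the final diagram follows because every arrow \emph{not} drawn is blocked by one of the listed non-implications together with transitivity of the drawn arrows.
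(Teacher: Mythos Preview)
Your positive implications match the paper's except for $(iii)\Rightarrow(iv)$: invoking Theorem~\ref{affine_prep} requires the induced algebras on the type~$\atyp$ minimal sets to be affine, which need not hold for a general finite algebra (the minimal set may have a nonempty tail, and abelianness of $\m a$ is not assumed). The paper instead observes that any type~$\atyp$ minimal set $U$ carries a binary polynomial with a two-sided unit---namely $d(x,0,y)$ with $d$ a pseudo-Maltsev polynomial and $0$ in the body---which is a $1$-pointed $2$-cube polynomial on $\m a|_U$; Corollary~\ref{pointed_polynomial_cor} then gives $d_{\m a|_U}(n)\in O(n)$ for each such $U$, and Lemma~\ref{spreadaffrate} combines these to bound $d_{\m a}(n)$.

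Several of your counterexamples do not work. For $(i)\not\Rightarrow(iii)$: a finite module is solvable with a Maltsev polynomial, so by Corollary~\ref{kkv_cor} it \emph{is} a spread of its type~$\atyp$ minimal sets---exactly the wrong conclusion. The paper uses the $2$-element Boolean algebra, which has a Maltsev polynomial and $d_{\m a}(n)\in O(\log n)$ but has no type~$\atyp$ minimal sets whatsoever, so $(iii)$ fails vacuously; the same example dispatches $(iv)\not\Rightarrow(iii)$. For $(vi)\not\Rightarrow(v)$: ``abelian but not strongly abelian'' does not force exponential growth---Corollary~\ref{abelian_cor}(2) yields only $d_{\m a}(n)\in\Omega(n)$---and nothing in your description prevents powers of the algebra from having strongly abelian quotients, so neither half of the required behavior is secured. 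The paper uses finite implication algebras (Example~5.3.5 of \cite{kksz-A}): no nontrivial implication algebra is strongly abelian, so $(vi)$ holds, yet the growth is exponential. For $(iii)\not\Rightarrow(ii)$: Theorem~\ref{avoid} manufactures a \emph{new} algebra $\m b$ with the same $d$-function, and you have no control over whether $\m b$ is a spread of its type~$\atyp$ minimal sets, so this route cannot be completed as stated. The paper instead exhibits Example~\ref{abelian_spread_not_maltsev}, an abelian algebra that is a spread of type~$\atyp$ minimal sets but has no Maltsev polynomial, and then applies Theorem~\ref{nilpotent} (abelian implies left nilpotent) to conclude it has no pointed cube polynomial either.
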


We shall prove this theorem in Section~\ref{summary_sec}.

\section{A new characterization of solvability}\label{solvchar}

In this section we show how elementary translations coming
from idempotent polynomials characterize solvability.

\begin{df}\label{t-digraph}
  Let $\m A$ be an algebra and $p$ an idempotent polynomial
  of~$\m A$ (that is, $p(x,x,\ldots,x)=x$ for every $x\in
  A$). The \emph{translation-digraph} $\Tr(p)$ of $p$ has
  vertex set~$A$, and directed edges of the form
  $(c,c')=\big(p(c,c,\ldots,c),
  p(c,\ldots,c,d,c,\ldots,c)\big)$, where $c,d\in A$. (The
  element $d$ occurs in exactly one of the arguments of $p$,
  but it can be any of the arguments.)
\end{df}

Recall that a \emph{neighborhood} $U$ of an algebra $\m A$
is the range of an idempotent unary polynomial of $\m A$,
that is, $U=e(A)$ for a unary polynomial $e$ satisfying
$e\big(e(x)\big)=e(x)$ for every $x\in A$.

\begin{thm}\label{solvcharthm}
  Let $\m A$ be a finite algebra. Then $\m A$ is solvable if
  and only if for every neighborhood $U$ of~$\m A$, and
  every idempotent polynomial $p$ of the induced algebra~$\m
  A|_U$, the directed graph $\Tr(p)$ is strongly connected.
\end{thm}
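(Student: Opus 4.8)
The plan is to prove both directions using tame congruence theory, exploiting the fact that solvability is detectable locally via the behavior of polynomials on minimal sets and neighborhoods. For the forward direction, assume $\m A$ is solvable. Then for every neighborhood $U$, the induced algebra $\m A|_U$ is again solvable (solvability is inherited by induced algebras on neighborhoods). It suffices to show: if $\m B$ is a finite solvable algebra and $p$ is an idempotent polynomial of $\m B$, then $\Tr(p)$ is strongly connected. First I would reduce to showing that every vertex $d$ is reachable from every vertex $c$. The edges of $\Tr(p)$ have the form $(c, p(c,\ldots,c,d,c,\ldots,c))$; by idempotence the "tail" is $c$ itself, so the one-step reachable set from $c$ is $\{p(c,\ldots,c,d,c,\ldots,c) : d\in B\}$, i.e.\ the union of the ranges of the unary polynomials $e_i^c(y) = p(c,\ldots,c,y,c,\ldots,c)$. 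The key point is that since $\m B$ is solvable, it is in particular $\emptyset$-coherent / the relevant congruences collapse enough that composing these translations eventually sweeps out all of $B$; more precisely, I would consider the reflexive-transitive closure of the edge relation, observe it is a compatible preorder (in fact its symmetric closure is a congruence $\theta$ because $p$ being idempotent forces the generated relation to be invariant under all of $\m B$'s structure in the solvable case), and then argue that $\theta$ must be the full congruence: otherwise $\m B/\theta$ would be a nontrivial solvable quotient in which some nonconstant idempotent polynomial $p$ has ALL of its unary translations $e_i^c$ equal to the constant map, which would make $p$ itself essentially nullary on each block — contradicting solvability via the fact that a solvable algebra cannot have a "totally degenerate" idempotent polynomial of arity $>1$ without being trivial on the relevant minimal set. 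I expect the cleanest way to nail this is to induct on $|B|$ using a minimal (abelian) congruence $\alpha$: strong connectivity of $\Tr(p)$ on $\m B/\alpha$ follows by induction, and within each $\alpha$-block strong connectivity follows because $p$ restricted there, together with abelianness of $\alpha$, gives enough "movement" via the term condition.

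For the converse, I would prove the contrapositive: if $\m A$ is not solvable, produce a neighborhood $U$ and an idempotent polynomial $p$ of $\m A|_U$ with $\Tr(p)$ not strongly connected. Since $\m A$ is not solvable, there is a prime quotient $\langle \delta, \theta\rangle$ in $\Con \m A$ of type $\mathbf 2$, $\mathbf 3$, $\mathbf 4$, or $\mathbf 5$ (any non-abelian type). Pick a $\langle\delta,\theta\rangle$-minimal set $U$ and let $N = U$ as our neighborhood. In the induced algebra $\m A|_U$ with the tame-congruence-theory normalization, the relevant type structure persists. In the non-solvable types there is a "semilattice-like," "lattice-like," or "Boolean-like" obstruction: concretely, for type $\mathbf 5$ there is an idempotent binary polynomial behaving like a semilattice operation $\wedge$ on a two-element set $\{0,1\}$ inside $U$; for types $\mathbf 3,\mathbf 4$ there is one behaving like meet or join of a two-element lattice. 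Taking $p(x,y)$ to be (a polynomial inducing) that semilattice/meet operation on $\{0,1\}$, and restricting to the two-element neighborhood $\{0,1\}$, the translations are $y\mapsto p(0,y)$ and $y\mapsto p(y,0)$, both of which are constantly $0$ (if $p$ is meet with absorbing element $0$), while $y\mapsto p(1,y)$ is the identity. So from vertex $0$ the only out-edge is a loop, and vertex $1$ is not reachable from $0$: $\Tr(p)$ fails to be strongly connected. For type $\mathbf 2$ (abelian but the quotient is type $\mathbf 2$, which is allowed only when... wait — type $\mathbf 2$ quotients are exactly the ones that CAN appear in solvable algebras), so in fact the non-solvable case only needs types $\mathbf 1, \mathbf 3, \mathbf 4, \mathbf 5$; type $\mathbf 1$ (unary/strongly abelian) similarly supplies a "$\pi_1$-like" idempotent binary polynomial whose translations are badly non-surjective. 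I would handle types $\mathbf 1,\mathbf 3,\mathbf 4,\mathbf 5$ uniformly by noting each supplies on a two-element neighborhood an idempotent polynomial that is not a Maltsev-type/"connected" operation.

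The main obstacle I anticipate is the forward direction — specifically, justifying that the edge relation of $\Tr(p)$, closed transitively and symmetrically, is actually a congruence of $\m B$ (so that the induction on a minimal congruence and the quotient argument go through), and then showing a nontrivial solvable algebra cannot admit an idempotent non-unary polynomial all of whose translations are constant. The compatibility of the generated equivalence relation is not automatic for a single idempotent polynomial in an arbitrary algebra; it should follow here from a term-condition computation using that every prime quotient of $\m B$ is abelian (type $\mathbf 2$), which is exactly the solvability hypothesis, but getting the bookkeeping right across all the arguments of $p$ and all neighborhoods is where the real work lies. A secondary subtlety is making sure "neighborhood of $\m A|_U$ is a neighborhood of $\m A$" and the solvability-passes-to-induced-algebras facts are applied correctly so that the local reduction is legitimate.
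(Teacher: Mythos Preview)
Your overall architecture for the forward direction (reduce to $U=A$, induct on $|A|$ via a minimal congruence $\alpha$, use the induction hypothesis on $\m A/\alpha$, then handle each $\alpha$-class separately) matches the paper's. The gap is the last step. You write that ``within each $\alpha$-block strong connectivity follows because $p$ restricted there, together with abelianness of $\alpha$, gives enough `movement' via the term condition,'' but this is exactly the heart of the argument, and it is not a one-line term-condition computation. The paper isolates it as a separate lemma: for a finite \emph{simple} abelian algebra $\m S$ and any idempotent polynomial $p$, the graph $\Tr(p)$ is strongly connected. The proof uses the structure theorem that a maximal multitrace $M\supseteq N$ (with $N$ a trace) satisfies $\m S|_M \cong (\m S|_N)^{[k]}$, a full matrix power, and then analyzes $p$ coordinatewise: in type~$\utyp$ each component of $p$ is forced to be a projection, and in type~$\atyp$ one writes $p(\wec x^1,\ldots,\wec x^\ell)=\sum_i M_i\wec x^i$ with $\sum_i M_i=I$ and builds an explicit path of length~$\ell$ from any $\wec a$ to any $\wec b$. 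Your alternative idea (that the symmetric--transitive closure of the edge relation is a congruence of $\m B$) does not work as stated: there is no reason this equivalence is compatible with the operations of $\m B$ merely because $p$ is idempotent, and you correctly flag this as the obstacle. It is a real one; the paper does not go that route.

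For the converse you have the right idea but a type-theoretic slip. Non-solvable means some prime quotient has type $\btyp$, $\ltyp$, or $\styp$; types $\utyp$ and $\atyp$ are precisely the solvable types. So type~$\utyp$ should not be on your list---indeed, if your ``$\pi_1$-like'' argument actually produced a non-strongly-connected $\Tr(p)$ from a type-$\utyp$ quotient, it would contradict the forward direction in any strongly solvable algebra. The paper handles types $\btyp$, $\ltyp$, $\styp$ uniformly by taking $U$ to be a $\langle\delta,\theta\rangle$-minimal set (a neighborhood of $\m A$) and $p$ the pseudo-meet on $\m A|_U$; from $p(x,p(x,y))=p(x,y)$ and $p(1,x)=p(x,1)=x$ one checks that no nontrivial edge of $\Tr(p)$ terminates at~$1$. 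Working on all of $U$ this way avoids your need to argue that the two-element body $\{0,1\}$ is itself a neighborhood of $\m A$, which is not automatic when the minimal set has a tail.
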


The proof of Theorem~\ref{solvcharthm}
is included at the end of this section.
The essential part of the proof is to establish the theorem
for classes of solvable minimal congruences.

\begin{lm}\label{transl}
  Let $\m S$ be a finite, simple, abelian algebra and $p$ an
  idempotent polynomial of\/~$\m S$. Then $\Tr(p)$ is
  strongly connected.
\end{lm}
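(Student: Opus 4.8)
The plan is to exploit the fact that a finite simple abelian algebra $\m S$ is polynomially equivalent to a module, or more precisely that its type is $\atyp$, so that its minimal sets and the structure of its idempotent polynomials are tightly controlled. First I would recall that since $\m S$ is simple and abelian, $\typ\{\m S\}=\{\atyp\}$; by the results of Chapter~4 of \cite{hobby-mckenzie}, any neighborhood of $\m S$ is again abelian of type $\atyp$, and on a minimal set the induced algebra is polynomially equivalent to a one-dimensional vector space over a finite field (a ``$\atyp$-minimal algebra'' is, up to polynomial equivalence, $\mathbf{1}$-element or a full idempotent reduct of such a space). The key structural input is that the full $\m S$ carries a Maltsev polynomial $d(x,y,z)$ with $d(x,y,y)=x=d(y,y,x)$, obtained from the abelianness together with simplicity (this is the classical fact that a finite simple abelian algebra has a Maltsev polynomial).

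Next, the main step: show that an idempotent polynomial $p(x_1,\dots,x_n)$ of $\m S$ induces a $\Tr(p)$-edge from $c$ to $c'$, and that these edges can be composed and reversed to get strong connectivity. An edge is $(c,c')=(p(c,\dots,c),p(c,\dots,d,\dots,c))$. To go back, I would like to produce, for given $c,c'$, a (possibly different) idempotent polynomial witnessing an edge $c'\to c$; but $\Tr(p)$ is defined for a fixed $p$, so instead I must show $\Tr(p)$ itself is strongly connected. Here is where abelianness is used decisively: by the term condition, changing one coordinate of $p$ from $c$ to $d$ has an effect that is ``the same everywhere'', so the set of elements reachable from $c$ in one step is exactly $c$ plus the ``range of the difference'', and this range is a single coset/block that does not depend on the base point $c$ — consequently the relation ``reachable in one step from'' is left and right translation-invariant in the module picture, making it an equivalence relation whose blocks are unions of cosets of a fixed subgroup. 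Then strong connectivity reduces to showing that subgroup is everything, i.e.\ that the one-step reachable set from $c$ is all of $S$.

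To finish I would argue by simplicity. The binary relation $R=\{(c,c') : c' \text{ reachable from } c \text{ in one } \Tr(p)\text{-step}\}$, together with its symmetric and transitive closure, is a reflexive compatible relation on $\m S$ (compatibility comes from $p$ being a polynomial and the coordinatewise action); its transitive symmetric closure is a congruence of $\m S$. Since $\m S$ is simple, this congruence is either the diagonal or the full relation. It is the diagonal only if $p$ never moves any point, i.e.\ $p$ is a projection-like polynomial on each argument — more precisely, only if every edge is a loop, which by the translation-invariance forces the ``difference subgroup'' to be trivial for every argument, hence $p$ depends on no argument nontrivially, hence $p$ is constant, contradicting idempotence (as $|S|>1$). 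Therefore the congruence is the full relation, and since the reachable-in-$\le m$-steps sets are an increasing chain of unions of cosets of a fixed subgroup that stabilizes, strong connectivity of $\Tr(p)$ follows.

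The hard part will be making the ``translation-invariance of one-step reachability'' precise without prematurely assuming $\m S$ is literally a module: one must run the term-condition argument for an arbitrary idempotent polynomial $p$ of several variables and verify that the set $\{p(c,\dots,d,\dots,c) : d\in S\}$ is, as a function of the base point $c$, a coset of a subgroup independent of $c$, and that composing such steps stays within the module's coset structure. The other delicate point is handling the case distinction cleanly when $p$ moves points in some arguments but not others; I expect to reduce to a single ``active'' argument and then invoke simplicity as above.
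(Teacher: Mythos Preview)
Your proposal rests on two structural claims that are both false. First, a finite simple abelian algebra need not have type~$\atyp$: type~$\utyp$ is also possible (e.g., a two-element set with only unary operations is simple and abelian). The paper's proof explicitly treats the type~$\utyp$ case, so this is not a removable hypothesis. Second, even when the type is~$\atyp$, a finite simple abelian algebra need not carry a Maltsev polynomial on all of~$S$; only the induced algebras on minimal sets are guaranteed to be affine. (The paper itself cites Example~8.7 of \cite{strnil}, a simple abelian algebra generating a nonabelian variety; such an algebra cannot be affine, hence has no Maltsev polynomial.) Since your entire ``module picture'' and the translation-invariance argument depend on $\m S$ being polynomially a module, the approach does not get off the ground.

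There is also a secondary gap: your claim that the symmetric--transitive closure of the edge relation is a congruence is asserted with the justification ``compatibility comes from $p$ being a polynomial,'' but this is not a valid argument. Compatibility of a relation with the operations of $\m S$ has nothing to do with $p$ being a polynomial; one would need to show that applying an arbitrary unary polynomial to a $\Tr(p)$-edge yields something in the closure, and there is no obvious reason for that. The paper avoids all of this by a different route: it uses that $S$ is connected by traces, then for $a,b$ in a common trace $N$ it passes to a maximal multitrace $M\supseteq N$ with $p(M,\ldots,M)=M$. On $M$ the induced structure \emph{is} a full matrix power of $\m S|_N$ (by Theorem~3.10 of \cite{kkv1}), and there the type~$\utyp$ and type~$\atyp$ cases are handled by direct computation. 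So the localization to multitraces is precisely what supplies the module-like structure you were hoping to assume globally.
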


\begin{proof}
  Since $S$ is connected by traces, it is sufficient to
  prove that for every trace~$N$ and every $a,b\in N$ there
  is a directed path in $\Tr(p)$ connecting $a$
  to~$b$. Consider the multitraces with respect to~$N$, that
  is, all sets of the form $q(N,N,\ldots,N)$, where $q$ is
  any polynomial of~$\m S$. Choose a multitrace $M$ that
  contains~$N$ and is maximal under inclusion. Then
  $p(M,\ldots,M)$ contains~$M$ (and therefore $N$), since
  $p$ is idempotent, and is a multitrace, so
  $p(M,\ldots,M)=M$. Theorem~3.10 of \cite{kkv1} shows that
  the induced algebra $\m S|_M$ is polynomially equivalent
  to the full matrix power $(\m S|_N)^{[k]}$ for some~$k$
  (more precisely, $\m S|_M$ is isomorphic to an algebra on
  $N^k$ that is polynomially equivalent to $(\m
  S|_N)^{[k]}$).

  Consider first the case, when the type of~$\m S$
  is~$\utyp$. Let $a$ correspond to the vector
  $(a_1,\ldots,a_k)$ and $b$ correspond to
  $(b_1,\ldots,b_k)$. We may assume that $a$ and $b$ differ
  only in one coordinate, and, to simplify the notation,
  that this is the first coordinate. That is, $a_i=b_i$ for
  $i\ge 2$. Indeed, if such pairs can be connected by a
  directed path, then by changing only one coordinate at a
  time we can connect $a$ to $b$.

  By the definition of a matrix power, the operation of $p$
  induced on~$M$ can be represented as follows:
  \[
  p(\wec{x}^1,\ldots,\wec{x}^\ell)=
  \begin{pmatrix}
    p_1(x_1^1,x_1^2,\ldots,x_i^j,\ldots,x_k^\ell)\\
    \vdots\\
    p_k(x_1^1,x_1^2,\ldots,x_i^j,\ldots,x_k^\ell)
  \end{pmatrix}\,,
  \]
  where $\wec{x}^j$ is (the transpose of) $(x_1^j, \ldots,
  x_k^j)$ and each $p_i$ is a $k\ell$-ary operation of~$\m
  S|_N$. Hence each $p_i$ is essentially unary.

  We use that $p$ is idempotent. Consider
  $p(\wec{x},\ldots,\wec{x})$ and move the first coordinate
  $x_1$ of $\wec{x}$ in~$N$ while keeping all other
  coordinates fixed. Then the value of $p_1$ must change, so
  $p_1$ must depend on one of the variables
  $x_1^1,x_1^2,\ldots,x_1^\ell$, and therefore on no
  other variable. Let this variable be $x_1^j$. The fact
  that $p$~is idempotent shows that
  $p_1(x_1^1,x_1^2,\ldots,x_k^\ell)=x_1^j$. An analogous
  statement holds for all other rows, in particular, each
  $p_i$ is a projection, hence it is idempotent. Substitute
  $b=(b_1,\ldots,b_k)$ to the $j$-th variable of $p$ and
  $a=(a_1,\ldots,a_k)$ to all other variables. Then in the
  first row we get $b_1$, and in the $i$-th row we get
  $a_i=b_i$ (if $i\ge 2$). Therefore the result of the
  operation is $b$, and so there is an edge $(a,b)$ in
  $\Tr(p)$.

  If the type of~$\m S$ is $\atyp$, then $\m S|_N$ is
  polynomially equivalent to a $1$-dimensional vector space
  over a field~$\m F$, and $(\m  S|_N)^{[k]}$ 
  is polynomially equivalent to the module $N^k$ over $\m
  F^{k\times k}$, where the matrices act on $N^k$ by
  multiplication. Since $p$ is idempotent,
  \[
  p(\wec{x}^1,\ldots,\wec{x}^\ell)=
  M_1\wec{x}^1+\ldots+M_\ell\,\wec{x}^\ell
  \]
  holds for some matrices $M_i\in \m F^{k\times k}$
  satisfying that their sum is the identity matrix.

  Notice that for any $\wec{x},\wec{y}\in N^k$ we have
  \[
  M_1(\wec{x}+\wec{y})+M_2\,\wec{x}+\ldots+M_\ell\,\wec{x}=
  \wec{x}+M_1\wec{y}\,,
  \]
  so $(\wec{x},\wec{x}+M_1\wec{y})$ is an edge of $\Tr(p)$,
  and similarly, $(\wec{x},\wec{x}+M_i\,\wec{y})$ is an
  edge, too, for every $1\le i\le \ell$. Now let
  $\wec{a},\wec{b}\in N^k$.  Since
  \[
  M_1(\wec{b}-\wec{a})+\ldots+M_\ell(\wec{b}-\wec{a})=\wec{b}-\wec{a}\,,
  \]
  we have 
  \[
  \wec{a}+M_1(\wec{b}-\wec{a})+\ldots+M_\ell(\wec{b}-\wec{a})=\wec{b}\,,
  \]
  and by our remark, this yields a path of $\ell$ edges in
  $\Tr(p)$ connecting $\wec{a}$ to~$\wec{b}$.
\end{proof}

\begin{proof}[Proof of Theorem~\ref{solvcharthm}]
Let $A$ be a finite
solvable algebra. We argue by induction on~$|A|$. Let $U$ be a
neighborhood of~$\m A$. Then the induced algebra~$\m A|_U$ is
solvable, too, so we can assume that $U=A$. Choose and fix
an idempotent polynomial~$p$ of~$\m A$. Let $\alpha$ be a
minimal congruence of~$\m A$. By the induction assumption,
$\m A/\alpha$ is strongly connected with respect to
$p/\alpha$. Thus if $a,b\in A$ are given, then there exist
edges $(u_1,v_1), \ldots, (u_k,v_k)$ of $\Tr(p)$ such that
$a\equiv_\alpha u_1$, $v_k\equiv_\alpha b$, and each
$v_i\equiv_\alpha u_{i+1}$ for $1\le i\le k-1$. Therefore it
is sufficient to prove that each $\alpha$-class is strongly
connected. Let $S$ be such a class. Then the induced algebra
$\m A|_S$ is simple, abelian, and $p|_S$ is a polynomial of
this induced algebra, since $p$ is idempotent. Therefore
Lemma~\ref{transl} shows that $S$ is strongly connected indeed.

For the converse, suppose that $A$ is not solvable. Then
there is a prime quotient $\langle\delta,\theta\rangle$ in
$\Con(\m A)$ of nonabelian type. Let $U$ be a
$\langle\delta,\theta\rangle$-minimal set. By 
\cite{hobby-mckenzie}, Lemma~2.10 and
Theorems~4.15, 4.17, and 4.23, $U$ is a neighborhood,
and there is a pseudo-meet operation~$p$ on~$\m A|_U$. This
operation is idempotent, satisfies the identity
$p(x,y)=p\big(x,p(x,y)\big)$, and there is an element $1\in
U$ such that $p(1,x)=p(x,1)=x$ for every $x\in U$. These
properties imply that there is no nontrivial edge in
$\Tr(p)$ whose endpoint is~$1$, hence this graph
is not strongly connected. Indeed, if $p(c,d)=1$, then
\[
1=p(c,d)=p\big(c,p(c,d)\big)=p(c,1)=c\,,
\]
and then $1=p(c,d)=p(1,d)=d$, so $c=d=1$. This calculation
shows that any edge terminating at $1$ must originate from $1$.
Thus the proof of
Theorem~\ref{solvcharthm} is complete.
\end{proof}

\section{Solvable algebras with a pointed cube term}\label{solv}

\begin{thm}\label{1-solv}
  Let $\m A$ be a finite, nontrivial solvable algebra that
  has a pointed cube term. Then $d_{\m a}(n)\in
  \Theta(n)$. In fact, the algebra $\m A^n$ is generated by
  those elements of $A^n$ that are constant with the
  possible exception of one component.
\end{thm}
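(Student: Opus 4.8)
The plan is to exploit the structure of a pointed cube term together with solvability. Fix a $k$-cube term $F(x_1,\dots,x_m)$ for $\m A$ witnessed by a $k\times m$ matrix $M$ over $\{x\}\cup\{c_1,\dots,c_p\}$, with every column containing a symbol other than $x$. Let $G\subseteq A^n$ be the set of those tuples that are constant in all but at most one coordinate; we want $\Sg{A^n}(G)=A^n$. Since $G$ certainly contains all constant tuples and has size $O(n)$ (for fixed $\m A$), proving $G$ generates $\m A^n$ simultaneously gives $d_{\m A}(n)\in O(n)$, and the lower bound $\Omega(n)$ follows because a nontrivial solvable algebra has an abelian homomorphic image, so Corollary~\ref{abelian_cor}(2) applies; hence $\Theta(n)$.

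First I would reduce to a statement about generating arbitrary single tuples from ``almost constant'' ones. Working coordinate by coordinate in the style of the proof of Theorem~\ref{pointed_polynomial}, a cube term lets one ``patch'' tuples: given tuples agreeing outside a block of coordinates, $F$ applied to suitable arguments produces a tuple that agrees with a target on a larger set of coordinates. The key point a pointed cube term buys us is that the constant columns of $M$ can be filled with \emph{constant} tuples (allowed in $G$), and each non-constant column, being constant outside one coordinate, is also already in $G$ or is obtained from previously generated material. So the real task is an induction: show that from $G$ one can generate every tuple that is constant outside a set $T$ of coordinates, by induction on $|T|$, using the cube identities to merge a tuple constant outside $T_1$ with one constant outside $T_2$ into one constant outside $T_1\cap T_2$ when $T_1,T_2$ overlap appropriately. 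The base case $|T|\le 1$ is the definition of $G$.

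The step I expect to be the main obstacle is that a raw cube term does not by itself let us hit an \emph{arbitrary} target value in the patched coordinates — the cube identities only guarantee $F(M)\approx(x,\dots,x)^{\mathsf t}$, i.e. they control $F$ on a restricted diagonal-like set of inputs, not on all inputs. This is exactly where solvability must enter. I would invoke the new characterization from Section~\ref{solvchar} (Theorem~\ref{solvcharthm}): solvability forces every translation-digraph $\Tr(p)$ of an idempotent polynomial on every neighborhood to be strongly connected, which is the tool for showing that by composing elementary translations (changing one argument of an idempotent polynomial at a time) one can move any value to any other value within a coordinate. Concretely, from the cube term one extracts, for each coordinate, idempotent polynomials whose translation-digraphs are strongly connected by Theorem~\ref{solvcharthm}; chaining these translations lets us adjust the value in the single ``free'' coordinate of an almost-constant tuple to anything we like, and then the cube-term patching argument assembles a general tuple from such adjustable almost-constant pieces. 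Assembling these two ingredients — the combinatorial cube-term merge and the solvability-driven value adjustment — into a single induction is the crux; the remaining estimates are routine given Theorem~\ref{basic_estimates}(1) and Corollary~\ref{abelian_cor}.
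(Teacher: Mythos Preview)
Your identification of the lower bound (via an abelian quotient and Corollary~\ref{abelian_cor}(2)) and of the generating set $G$ is correct, and you rightly single out the translation-digraph characterization of solvability as the structural input. But the inductive scheme you sketch has a genuine gap at the ``chaining translations'' step. To move along an edge of $\Tr(p)$ in coordinate~$j$ of $\m A^n$ while leaving the remaining coordinates unchanged, you must feed $p$ the current tuple in all argument slots but one, and in that one slot a tuple that agrees with the current tuple in every coordinate $\neq j$. That auxiliary tuple therefore has the \emph{same} support $T$ as the tuple you are trying to produce, so it is not supplied by your inductive hypothesis on $|T|$. (Your remark about adjusting the free coordinate of an \emph{almost-constant} tuple is vacuous: every such tuple is already in $G$.) The cube-term ``merging'' you invoke also does not help here: a pointed cube term only guarantees $F$-values along the constrained patterns of the matrix $M$, so the standard patching that works for ordinary cube terms --- where the off-diagonal variable $y$ can be chosen freely --- is not available; and your ``merge into $T_1\cap T_2$'' has the direction reversed in any case.

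The paper's argument is organized quite differently. It inducts on $|A|$ via a minimal congruence $\alpha$: the inductive hypothesis on $\m A/\alpha$ guarantees that $B=\Sg{A^n}(G)$ meets every $\alpha^n$-class, and what remains is to repair a representative one coordinate at a time \emph{within an $\alpha$-class}. The key relaxation is that only the already-repaired coordinates must match exactly; the later ones need only land in the correct $\alpha$-class (this is formalized as ``completeness'' of a prefix). The single-coordinate repair uses \emph{two} cube identities at once: from the first one extracts an idempotent polynomial whose translation digraph on the simple abelian $\alpha$-class is traversed (Lemma~\ref{transl}); the second cube identity dictates the entries in all other rows of a carefully built matrix, so those rows evaluate exactly on the repaired prefix and $\alpha$-correctly on the tail. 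A final subclaim removes an auxiliary permutation arising because the constants in the repaired row are only $\alpha$-close to the cube constants. This two-identity matrix construction, together with the modulo-$\alpha$ bookkeeping, is precisely what is missing from your sketch.
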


(This establishes for solvable algebras
the extra implication (ii)$\Rightarrow$(iv)
among the conditions of Theorem~\ref{basic}.)

\begin{proof}
  The quotient modulo any maximal congruence of $\m A$ is
  abelian, so the combination of
  Corollary~\ref{abelian_cor}~(2) and
  Theorem~\ref{basic_estimates}~(2) shows that $d_{\m
    a}(n)\in \Omega(n)$. Therefore it is sufficient to prove
  that $d_{\m a}(n)\in O(n)$.

  For a given~$n$, let $G$ consist of those elements of
  $A^n$ that are constant with the possible exception of one
  component. The size of~$G$ is $|A|+n|A|(|A|-1)$ for $n\ge
  3$, which is linear in $n$. Thus it is enough to prove
  that $G$ is a generating set of~$\m A^n$. Let $\m B$ be
  the subalgebra generated by~$G$. Note that $B$ is
  symmetric under any permutation of coordinates. This will
  allow us to simplify notation in the proof below by
  rearranging coordinates at certain points.

  We may (and shall) assume that our cube identities involve
  only the variable~$x$, since all other variables can be
  replaced by a fixed constant from~$A$.

  We induct on $|A|$. Let $\alpha$ be a minimal congruence
  of~$\m A$. By our induction hypothesis, $(A/\alpha)^n$ is
  generated by $G/(\alpha^n)$, since this is the set of all
  sequences in $(A/\alpha)^n$ that are constant with the
  possible exception of one component. Therefore $B$
  intersects each $\alpha^n$-class. Our proof shall proceed
  by ``repairing'' these representatives component by
  component.

  For a given $1\le m\le n$ call a sequence
  $(b_1,\ldots,b_m)$ of elements of~$A$ \emph{complete}, if
  for every $a_{m+1},\ldots,a_n$ there exist elements
  $b_{m+1},\ldots,b_n$ such that $(b_1,\ldots,b_n)\in B$ and
  $b_i\equiv_\alpha a_i$ for all $i>m$. We shall say
  informally that \emph{$(b_1,\ldots,b_m)$ can be extended
    into~$B$ along $a_{m+1},\ldots,a_n$.} The argument in
  the previous paragraph shows that the empty sequence is
  complete (when $m=0$), and we endeavor to show that every
  element of $A^n$ is complete, which means that
  $B=A^n$. Therefore it is sufficient to prove the
  following.

  \begin{clm}\label{1-solvclm}
    Suppose that each element of $A^{m-1}$ is complete. Then
    each element of $A^m$ is complete.
  \end{clm}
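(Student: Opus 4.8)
The plan is, given an arbitrary $(b_1,\dots,b_m)\in A^m$ and arbitrary $a_{m+1},\dots,a_n\in A$, to exhibit a member of $B$ witnessing that $(b_1,\dots,b_m)$ can be extended into $B$ along $a_{m+1},\dots,a_n$. First I would invoke the hypothesis of the claim: $(b_1,\dots,b_{m-1})$ is complete, so applying the definition of completeness to this tuple and to the extension data $(b_m,a_{m+1},\dots,a_n)$ yields an element $\wec c=(b_1,\dots,b_{m-1},c_m,c_{m+1},\dots,c_n)\in B$ with $c_m\equiv_\alpha b_m$ and $c_i\equiv_\alpha a_i$ for $i>m$. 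Thus $\wec c$ is already exactly correct in the first $m-1$ coordinates and correct modulo $\alpha$ everywhere; it will remain only to ``repair'' coordinate $m$, replacing $c_m$ by the prescribed value $b_m$ (which lies in the same $\alpha$-class) without disturbing coordinates $<m$ and while keeping coordinates $>m$ inside their $\alpha$-classes.

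For the repair I would combine solvability with the pointed cube term. Since $\alpha$ is a minimal congruence of a solvable algebra, $\langle 0,\alpha\rangle$ is an abelian prime quotient, so $[\alpha,\alpha]=0$. The core step is to process the pointed cube term into a ternary polynomial $D(x,y,z)$ of $\m A$ that is Mal'cev on each $\alpha$-class: $D(x,y,y)=x$ holding identically, and $D(u,u,v)=v$ holding whenever $u\equiv_\alpha v$. I would obtain this by specializing the cube identities and reading them modulo $\alpha$ and inside individual $\alpha$-classes --- where, thanks to $[\alpha,\alpha]=0$ (and, if needed, to the fact that a pointed cube term forces $\m A$ to omit tame-congruence type $\utyp$, so that the relevant $\alpha$-classes are affine), the cube identities collapse and can be reassembled into the two displayed identities. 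This is in spirit the cube-term-to-difference-term construction of the congruence modular world, with abelianness of $\alpha$ playing the role of modularity. It matters that $D$ is only a \emph{polynomial}, that is, a term of $\m A$ together with constants of $A$: applied coordinatewise it uses only constant tuples as parameters, and those lie in $G\subseteq B$, so $B$ is closed under the coordinatewise action of $D$.

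Granting such a $D$, the claim will follow by the usual Mal'cev trick. Let $\wec p$ be the tuple equal to the constant $b_m$ in every coordinate except coordinate $m$, where it equals $c_m$ (so $\wec p$ is constant with the possible exception of one component, hence $\wec p\in G\subseteq B$), and let $\wec q$ be the constant tuple all of whose entries are $b_m$ (so $\wec q\in G\subseteq B$). Then $D(\wec c,\wec p,\wec q)\in B$, and coordinatewise: for $j<m$ we get $D(b_j,b_m,b_m)=b_j$; in coordinate $m$ we get $D(c_m,c_m,b_m)=b_m$, using $c_m\equiv_\alpha b_m$; and for $i>m$ we get $D(c_i,b_m,b_m)=c_i\equiv_\alpha a_i$. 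Hence $D(\wec c,\wec p,\wec q)=(b_1,\dots,b_m,c_{m+1},\dots,c_n)$ is a member of $B$ extending $(b_1,\dots,b_m)$ into $B$ along $a_{m+1},\dots,a_n$; as the $a_i$ were arbitrary, $(b_1,\dots,b_m)$ is complete.

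The hard part will be the middle step --- extracting a genuinely Mal'cev-like operation on $\alpha$-classes from a mere pointed cube term. One cannot simply cite ``cube term $\Rightarrow$ congruence modular $\Rightarrow$ difference term'', because a pointed cube term does not force congruence modularity, even for the constant expansion: the two-element semilattice, expanded by its bounds, has a pointed cube term yet generates a non-modular variety. It is precisely the solvability hypothesis, via the equality $[\alpha,\alpha]=0$, that must be injected into the cube identities to make this reduction go through; the reductions before and after it are routine bookkeeping.
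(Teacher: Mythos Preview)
Your framing of the induction --- use completeness of $(b_1,\dots,b_{m-1})$ to find a vector $\wec c\in B$ that is already correct in coordinates $<m$ and correct modulo $\alpha$ elsewhere, then repair coordinate $m$ within its $\alpha$-class --- matches the paper exactly. The disagreement, and the gap, lies entirely in how the repair is carried out.

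You posit a ternary polynomial $D$ of $\m A$ with $D(x,y,y)=x$ holding \emph{identically} and $D(u,u,v)=v$ whenever $u\equiv_\alpha v$, but you do not construct it. The first identity is a global one: in your application you need $D(b_j,b_m,b_m)=b_j$ for completely unrelated $b_j,b_m\in A$. A pointed cube term supplies nothing like this. After substituting constants for all variables other than $x$, a cube identity reads $F(x,\dots,x,u_1,\dots,u_p)=x$, which yields only an \emph{idempotent} polynomial in $x$; it says nothing about what happens when some arguments are set equal to a new value $y$. Your appeal to $[\alpha,\alpha]=0$ does not help here: abelianness of $\alpha$ controls behavior \emph{within} $\alpha$-classes, not the cross-class identity $D(x,y,y)=x$ you need. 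The parenthetical that a pointed cube term forces omission of type~$\utyp$ is likewise unproven --- and the paper's Lemma~\ref{transl} explicitly treats the type~$\utyp$ case, which would be unnecessary if it could be excluded. Your own closing paragraph concedes that this middle step is ``the hard part'' and that the obvious difference-term route is blocked; what is missing is any replacement.

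The paper's mechanism is entirely different. No Maltsev-like $D$ is produced. Instead, from one cube identity and the completeness hypothesis one manufactures an idempotent polynomial $r$ of the induced algebra on the $\alpha$-class $S$; the solvability characterization of Section~\ref{solvchar} (via Lemma~\ref{transl}) then guarantees that $\Tr(r)$ is strongly connected, so $b$ can be walked to $c$ one edge at a time. Realizing each edge inside $B$ requires a \emph{second} cube identity (one with a constant in the first argument) and the five-way column analysis of Figure~\ref{FigI-V}, together with a further subclaim to undo an auxiliary permutation $\pi$. What you label ``routine bookkeeping'' before and after the core step is in fact where all of the work lives.
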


\cproof
  To set up notation, we permute the coordinates in the
  following way. We assume that $(b_2,\ldots,b_m)$ is
  complete and intend to show that $(c,b_2,\ldots,b_m)$ is
  complete for every~$c$.

  Fix $a_{m+1},\ldots, a_n\in A$. We have to extend
  $(c,b_2,\ldots,b_m)$ into $B$ along $a_{m+1},\ldots,
  a_n$. The completeness of $(b_2,\ldots,b_m)$ allows us to
  find elements $b,b_{m+1},\ldots,b_n$ such that
  $b\equiv_\alpha c$,\; $b_j\equiv_\alpha a_j$ for all $j>m$
  and $\wec{b}=(b_1,\ldots,b_n)\in B$. In other words,
  $(b,b_2,\ldots,b_m)$ can be extended into $B$ along
  $a_{m+1},\ldots, a_n$.

  Let $S=b/\alpha=c/\alpha$. The induced algebra $\m S=\m
  C|_S$ is simple and abelian. Consider a cube identity
  $F(x,\ldots, x,u_1,\ldots,u_p)\approx x$ (here we
  rearranged the variables appropriately and
  $u_1,\ldots,u_p$ are constants). For each constant $u_j$
  extend $(b_2,\ldots,b_m)$ into~$B$ along $u_j$ and
  $a_{m+1},\ldots, a_n$. We get a vector $\wec{u}^j\in B$
  whose first coordinate is denoted by~$u_j'$.

  Consider the polynomials 
  \begin{align}\label{pq}
    p(x_1,\ldots,x_\ell) &= F^{\m A}(x_1,\ldots,
 x_\ell,u_1,\ldots,u_p)\\
    q(x_1,\ldots,x_\ell) &= F^{\m A}(x_1,\ldots,
    x_\ell,u_1',\ldots,u_p')\,.\notag
  \end{align}
  The first polynomial is idempotent because of the cube
  identity above. Therefore $p(S,\ldots,S)\subseteq S$, and
  $p$~restricted to~$S$ is a polynomial of~$\m S$. Since
  $u_j\equiv_\alpha u_j'$, the polynomial~$q$ can also be
  restricted to~$S$.

  The congruence~$\alpha$ is abelian. This implies that
  $\pi(x)=q(x,x,\ldots,x)$ is a permutation of~$S$. Indeed,
  if $q(s,\ldots s)=q(t,\ldots,t)$
  for some $s,t\in S$, then using the term
  condition for $F$ (which can be applied, since
  $s\equiv_\alpha t$ and
  $u_j\equiv_\alpha u_j'$) we get that $p(s,\ldots, s)
  =p(t,\ldots,t)$. Since~$p$ is idempotent, we see that
  ${s=t}$. Thus $\pi$ is indeed a permutation. Let $o$ be
  the order of $\pi$ in the symmetric group on~$S$ and
  $r(x_1,\ldots,x_\ell) =
  \pi^{o-1}\big(q(x_1,\ldots,x_\ell)\big)$. This is an
  idempotent polynomial of $\m S$.

  Lemma~\ref{transl} can be applied to the polynomial~$r$ to
  connect $b$ to~$c$ by edges in $\Tr(r)$. We plan to go
  along this path and extend into~$B$ step by step, so it is
  enough to show how to do a single edge. For the simplicity
  of notation we may assume that $(b,c)$ itself is an
  edge. That is, also by appropriately rearranging the
  arguments of~$F$, that $r(d,b,\ldots,b)=c$ for some
  element~$d\in S$. This means that
  $q(d,b,\ldots,b)=\pi(c)$.

  Now choose a second cube identity, where the first
  variable is not~$x$. We shall apply the term~$F$ to a
  matrix~$M$ whose elements we now describe. The number of
  rows is~$n$ and the number of columns is the arity
  of~$F$. We shall distinguish five types of columns of~$M$,
  that is, arguments of~$F$. The definitions of these types
  and the way we fill the entries of the matrix~$M$ are
  shown on Figure~\ref{FigI-V} (page~\pageref{FigI-V}).

\newmdenv[linewidth=1pt]{mybox}

  \begin{figure}[ht]
\begin{mybox}
    \leftline{The types of columns of~$M$:}
  \begin{enumerate}
  \item[(I)] The first column/argument is the only one to have
    type~I. There is $x$ in the first cube identity and a
    constant named $s$ in the second cube identity.
  \item[(II)] Type II column/argument: both cube
    identities have~$x$ in this argument.
  \item[(III)] Type III: there is $x$ in the first cube
    identity, and a constant in the second. We exclude
    the first argument (which also has this
    property).
  \item[(IV)] Type IV: there is a constant in the first
    cube identity, and $x$ in the second.
  \item[(V)] Type V: there are constants in both cube
    identities.
  \end{enumerate}
  \leftline{The way to fill the columns of~$M$:}
  \begin{enumerate}
  \item[(V)] In a type V argument, let $u_k$ be the constant
    in the first cube identity, and $w$ the constant in the
    second cube identity. Then put $u_k'$ (defined above)
    into the first coordinate and $w$ to all other
    coordinates. (Of course for different columns these
    constants may differ.) This column is in $G$.
  \item[(IV)] Into a type IV column, put the appropriate
    vector $\wec{u}^j$, also defined above. Recall that this
    vector extends $(b_2,\ldots,b_m)$ along
    $u_j,a_{m+1},\ldots,a_n$ into $B$, and its first entry
    is $u_j'$.
  \item[(III)] For a type III argument, let $t$ be the
    constant appearing in the second cube identity. Put $b$
    to the first coordinate and $t$ everywhere else. This
    column is in~$G$.
  \item[(II)] Put the vector $\wec{b}\in B$ defined above to all
    type II columns.
  \item[(I)] To the first column (the only column of
    type~$I$), put the element~$d$ to the first coordinate,
    and $s$ (the constant in the first variable of the
    second cube identity) everywhere else. This is also
    in~$G$.
  \end{enumerate}
  \[
  \begin{matrix}
    \text{I} & \text{II} & \text{III} & \text{IV} & \text{V}\\
    d & b & b & u_j' & u_k'\\
    s & b_2 & t & b_2 & w\\
    s & b_3 & t & b_3 & w\\
    \vdots &\vdots &\vdots &\vdots &\vdots\\
    s & b_m & t & b_m & w\\
    s & b_{m+1} & t & b_{m+1}' & w\\
    \vdots &\vdots &\vdots &\vdots &\vdots\\
    s & b_n & t & b_n' & w
  \end{matrix}
  \]
\end{mybox}
  \caption{The elements substituted into the five types of columns}\label{FigI-V}
  \end{figure}

  Applying $F$ to this matrix the first row yields
  $q(d,b,\ldots,b)=\pi(c)$. All other rows can be evaluated
  using the second cube identity. The result is $b_j$ for
  $2\le j\le m$ and is $\alpha$-related to $a_j$ for $j>m$,
  since $b_j, b_j'\equiv_\alpha a_j$.

  This is almost the vector we are looking for, we only have
  to replace $\pi(c)$ by $c$ in the first component. In
  other words, it is sufficient to prove the following.

  \begin{subclm}\label{deletepi}
    Suppose that $(\pi(c_1),c_2,\ldots,c_n)\in B$ holds for
    some vector where $c_1\in S$. Then
    $(c_1,c_2,\ldots,c_n)\in B$.
  \end{subclm}

  \subcproof The equations in display (\ref{pq}) and
  $\pi(x)=q(x,x,\ldots,x)$ show that we have
  $\pi^{o-1}(x)=g(x,u_1',\ldots,u_p')$ for a term $g$
  obtained from $F$ by composition, and if we replace each
  $u_i'$ by $u_i$, then we get that $g(x,u_1,\ldots,u_p)=x$
  (the $(o-1)$st power of the identity map). The columns of
  the matrix
  \[
  \begin{pmatrix}
    \pi(c_1) & u_1' & \ldots & u_p'\\
    c_2 & u_1 & \ldots & u_p\\
    \vdots &\vdots &\vdots &\vdots\\
    c_n & u_1 & \ldots & u_p\\
  \end{pmatrix}
  \]
  are in $B$ (for the first column we assumed this, and the
  others are in~$G$). Applying the term~$g$ to the rows we
  get that $(c_1,c_2,\ldots,c_n)\in B$, proving the subclaim and
  Claim~\ref{1-solvclm}. This completes the proof of
  Theorem~\ref{1-solv}, too.
\end{proof}

\section{Spreads and growth rate}\label{spreadsec}

\begin{df}\label{spread}
  Let $\m A$ be an algebra and $\mathcal U$ a collection of
  subsets of~$A$. We say that a subset $S\subseteq A$ is a
  \emph{spread} of~$\mathcal U$ if there exists a
  polynomial~$p$ of~$\m A$ and (not necessarily distinct)
  elements $U_1,\ldots,U_k\in\mathcal U$ such that
  $p(U_1,\ldots,U_k)=S$.
\end{df}

\begin{lm}\label{spreadaffrate}
  Suppose that $\m A$ is a finite algebra such that all
  constants are term operations, $p$ is a polynomial of~$\m
  A$ and $A=p(U_1,\ldots, U_k)$ holds for some subsets
  $U_i\subseteq A$. Then
  \[
  d_{\m A}(n)\le d_{\m U_1}(n)+\ldots+d_{\m U_k}(n)\,,
  \]
  where $\m U_i=\m A|_{U_i}$ are the algebras 
  $\m a$ induces on these sets.
\end{lm}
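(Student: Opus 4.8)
The plan is to build a generating set for $\m A^n$ out of generating sets for $\m U_1^n,\ldots,\m U_k^n$, using the polynomial $p$ to glue them together. First I would fix $n$ and, for each $i$, let $X_i\subseteq U_i^n$ be a generating set for the algebra $\m U_i^n$ of least size $d_{\m U_i}(n)$. Viewing each $U_i^n$ as a subset of $A^n$ coordinatewise, set $X=X_1\cup\cdots\cup X_k$; note $|X|\le d_{\m U_1}(n)+\cdots+d_{\m U_k}(n)$. The claim to prove is that $X$ generates $\m A^n$, and the asserted inequality then follows immediately from the definition of $d_{\m A}(n)$.

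To see that $X$ generates $\m A^n$, let $\m B=\Sg{A^n}(X)$ be the subalgebra it generates, and let $\wec c=(c^1,\ldots,c^n)\in A^n$ be arbitrary, where $c^j\in A$. Since $A=p(U_1,\ldots,U_k)$, for each coordinate $j$ we may choose $u_i^j\in U_i$ with $c^j=p(u_1^j,\ldots,u_k^j)$. For each fixed $i$, the vector $\wec u_i=(u_i^1,\ldots,u_i^n)$ lies in $U_i^n$, hence in the subalgebra $\m U_i^n=\Sg{A^n}(X_i)$; so $\wec u_i$ is a term operation of $\m A$ applied (coordinatewise) to elements of $X_i\subseteq X$, and therefore $\wec u_i\in B$. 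Now apply the polynomial $p$ of $\m A$ coordinatewise to $\wec u_1,\ldots,\wec u_k$. Here I use the hypothesis that all constants of $\m A$ are term operations: this makes $p$ not merely a polynomial but (up to the standard identification of a polynomial with a term in which the ``parameters'' of $p$ are named by nullary term operations) an operation under which $B$, being a subalgebra, is closed. Hence $p(\wec u_1,\ldots,\wec u_k)\in B$, and its $j$-th coordinate is $p(u_1^j,\ldots,u_k^j)=c^j$, so $p(\wec u_1,\ldots,\wec u_k)=\wec c\in B$. As $\wec c$ was arbitrary, $B=A^n$.

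The one point requiring care — and the main obstacle — is the passage from ``$p$ is a polynomial of $\m A$'' to ``$B$ is closed under applying $p$ coordinatewise.'' A polynomial $p(x_1,\ldots,x_k)=t^{\m A}(x_1,\ldots,x_k,a_{k+1},\ldots,a_{k+r})$ involves fixed parameters $a_\ell\in A$; applying it coordinatewise to vectors in $A^n$ amounts to applying the term $t$ to those vectors together with the constant vectors $(a_\ell,\ldots,a_\ell)\in A^n$. A subalgebra is closed under terms but need not contain these constant vectors in general — which is precisely why the hypothesis ``all constants are term operations of $\m A$'' is imposed: it guarantees each $(a_\ell,\ldots,a_\ell)$ is a term operation of $\m A^n$ evaluated at no arguments (or, if one prefers, that every constant of $A$ lies in the subalgebra generated by the empty set, hence in $B$). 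With that hypothesis in hand the closure is automatic and the argument above goes through verbatim; without it the lemma can fail, so this is the step to state explicitly rather than gloss over.
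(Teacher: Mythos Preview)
Your proof is correct and follows essentially the same approach as the paper: take least-size generating sets $X_i$ for each $\m U_i^n$, observe that their union generates a subalgebra of $\m A^n$ containing each $U_i^n$ (using that the operations of $\m U_i$ are restrictions of polynomials of $\m A$, hence of term operations by the constants hypothesis), and then apply $p$ coordinatewise to obtain all of $A^n$. The only quibble is the equation $\m U_i^n=\Sg{A^n}(X_i)$, which should read $U_i^n\subseteq\Sg{A^n}(X_i)$ since $U_i^n$ need not be a subuniverse of $\m A^n$; but your subsequent sentence makes clear you have the correct containment in mind.
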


\begin{proof}
  For a given~$n$ let $G_i$ be a smallest size generating set of the
  algebra $\m U_i^n$. The set $G:=\bigcup_{i=1}^k G_i$ has size
  at most $d_{\m U_1}(n)+\ldots+d_{\m U_k}(n)$, and generates
  a subalgebra of $\m a^n$ containing all sets of the form 
  $U_i^n$. Since $p$ is a term operation of $\m a$, and 
\[
p^{\m a^n}(U_1^n,\ldots,U_k^n)=\left(p^{\m a}(U_1,\ldots,U_n)\right)^n = A^n
\]
we get that $G$ generates $\m a^n$.
\end{proof}

\begin{cor}\label{spreadafflin}
  If a finite algebra $\m A$ is a spread of a
  family of subsets on which the induced algebras have Maltsev polynomials,
  then the growth rate of~$\m A$ is at most linear.
\end{cor}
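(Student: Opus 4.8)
The plan is to reduce the statement to Lemma~\ref{spreadaffrate} and then to bound each summand that lemma produces by means of Corollary~\ref{pointed_polynomial_cor}. Write $A=p(U_1,\ldots,U_k)$, where $p$ is a polynomial of $\m A$ and $U_1,\ldots,U_k$ belong to the given family, so that each induced algebra $\m A|_{U_i}$ has a Maltsev polynomial. The one feature of $\m A$ that prevents a direct appeal to Lemma~\ref{spreadaffrate} is that $\m A$ need not have all its constants as term operations, so the first step is to pass to the constant expansion.

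Let $\m A'$ be the expansion of $\m A$ obtained by adjoining every element of $A$ as a nullary operation. Then all constants are term operations of $\m A'$; moreover every polynomial of $\m A$ is a term operation of $\m A'$, so the equality $A=p(U_1,\ldots,U_k)$ still exhibits $A$ as the image of a term operation of $\m A'$. Hence Lemma~\ref{spreadaffrate}, applied with $\m A'$ in place of $\m A$, yields
\[
d_{\m A'}(n)\ \le\ d_{\m A'|_{U_1}}(n)+\cdots+d_{\m A'|_{U_k}}(n).
\]
By Theorem~\ref{basic_estimates}~(4) we have $d_{\m A}(n)\le d_{\m A'}(n)+d_{\m A}(1)$ with $d_{\m A}(1)$ a constant, so it suffices to show that the right-hand side above lies in $O(n)$.

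For this I would note that, for each $i$, the induced algebra $\m A'|_{U_i}$ is an expansion of $\m A|_{U_i}$: the basic operations of $\m A'|_{U_i}$ are exactly the polynomials of $\m A'$ that preserve $U_i$, and every polynomial of $\m A$ preserving $U_i$ is among them. Consequently, by Theorem~\ref{basic_estimates}~(3), $d_{\m A'|_{U_i}}(n)\le d_{\m A|_{U_i}}(n)$. Now $\m A|_{U_i}$ is finite and has a Maltsev polynomial, which is the same thing as a $0$-pointed $2$-cube polynomial; since $(\m A|_{U_i})^2$ is finite and hence finitely generated, Corollary~\ref{pointed_polynomial_cor} (with $k=2$, in its polynomial form) gives $d_{\m A|_{U_i}}(n)\in O(n^{k-1})=O(n)$. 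Therefore $d_{\m A'|_{U_i}}(n)\in O(n)$ for each $i$, a finite sum of such functions is again in $O(n)$, and combining this with the previous paragraph yields $d_{\m A}(n)\in O(n)$, as desired.

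There is no substantial obstacle here; the only point requiring care is the clone bookkeeping in the middle two steps — verifying that a Maltsev polynomial of $\m A|_{U_i}$ persists as an operation of $\m A'|_{U_i}$ (equivalently, that $\m A'|_{U_i}$ is an expansion of $\m A|_{U_i}$), and that the image $p(U_1,\ldots,U_k)$ is unchanged when $p$ is regarded as a term operation of $\m A'$. Once the definitions of polynomial, term, constant expansion, and induced algebra are written out, these verifications are routine.
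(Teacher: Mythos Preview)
Your proof is correct and follows the same two-step outline the paper indicates (Lemma~\ref{spreadaffrate} plus Corollary~\ref{pointed_polynomial_cor}); you merely make explicit the passage to the constant expansion that the paper leaves implicit. One small simplification: since $\m A$ and its constant expansion $\m A'$ have the same polynomial clone, the induced algebras $\m A|_{U_i}$ and $\m A'|_{U_i}$ are literally identical, so the ``expansion'' comparison you make via Theorem~\ref{basic_estimates}~(3) is actually an equality.
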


\begin{proof}
  Use Corollary~\ref{pointed_polynomial_cor} and Lemma~\ref{spreadaffrate}.
\end{proof}

Several years ago,
K.~A.~Kearnes, E.~W.~Kiss and
M.~A.~Valeriote proved but did not publish
the fact that any finite algebra with a Maltsev polynomial
is \emph{covered} by its $\langle\alpha,\beta\rangle$-minimal sets. 
This statement
is a bit stronger than the statement that 
any finite algebra with a Maltsev polynomial
is a spread of its minimal
sets, so it is relevant here. 

To introduce the terminology, 
if $\mathcal U$ and $\mathcal V$
are sets of neighborhoods of $\m a$, then 
$\mathcal U$ \emph{covers} $\mathcal V$ if for
every $V\in {\mathcal V}$ there are
neighborhoods $U_1,\ldots,U_k\in {\mathcal U}$, idempotent
unary polynomials $e_1,\ldots, e_k, f$, and other
polynomials $\lambda, \rho_i$ such that $f(A)=V$,
$e_i(A)=U_i$ and
\begin{equation}\label{decomposition}
\lambda\big(e_1\rho_1(x),\ldots,e_k\rho_k(x)\big)=f(x)
\end{equation}
for all $x\in A$. (Here the $e_i$'s need not be distinct.)
Equation (\ref{decomposition}) expresses $V$
as a retract of the product of the $U_i$'s via
polynomial maps $\lambda(x_1,\ldots,x_k)$ and
$\big(e_1\rho_1(x),\ldots,e_k\rho_k(x)\big)$.
It is not hard to see that if $\mathcal U$ covers
$\mathcal V$ and $\mathcal V$ covers $\mathcal W$,
then $\mathcal U$ covers $\mathcal W$.
We say that $\mathcal U$ covers the algebra $\m a$ if $\mathcal U$
covers the set $\{A\}$.

Lemma~2.10 of \cite{hobby-mckenzie}
guarantees that the minimal sets of a finite algebra are neighborhoods.
If some set $\mathcal U$ of minimal sets covers $\m a$, 
and this is witnessed as in (\ref{decomposition})
by polynomials $e_i, \lambda, \rho_i$ and $f={\rm id}$,
then equation (\ref{decomposition}) yields 
$\lambda(U_1,\ldots,U_k)=A$, so $A$ is a spread of the 
minimal sets in $\mathcal U$.
Hence if $\m a$ is covered by its minimal sets, then
it is a spread of its minimal sets. 

\begin{thm}\label{kkv}
  If $\m a$ is a finite algebra with a Maltsev polynomial,
  then $\m a$ is covered by its
  $\langle\alpha,\beta\rangle$-minimal sets, where
  $\langle\alpha,\beta\rangle$ runs over all prime quotients
  of\/~$\Conb(\m a)$.
\end{thm}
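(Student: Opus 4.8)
The statement to prove is Theorem~\ref{kkv}: a finite algebra $\m a$ with a Maltsev polynomial is covered by its prime-quotient minimal sets. We may pass to the constant expansion, so assume $\m a$ has a Maltsev \emph{term} $M(x,y,z)$; adjoining constants does not change the minimal sets or the covering relation, and all constants are now term operations. The plan is to induct on $|A|$. If $\m a$ is simple, then either it is abelian of type $\atyp$ or $\btyp$, or it is of type $\styp$ (type $\utyp,\ltyp$ are excluded because a Maltsev term forbids strongly abelian and semilattice behavior, by tame congruence theory); in the type $\styp$ case one must check directly that $\m a$ is a spread (indeed covered) by its minimal sets, and in the abelian case the minimal set is related to all of $A$ via a polynomial built from $M$. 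In any case, a simple algebra with a Maltsev term is covered by any one of its minimal sets $U$: pick a prime quotient $\langle 0_A,1_A\rangle$, a minimal set $U=e(A)$, and trace elements; the Maltsev term lets one write $A$ as a polynomial image of $U$ and use idempotent polynomials to land inside $U$, producing a decomposition of the form \eqref{decomposition} with $f=\mathrm{id}$.

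For the inductive step, let $\alpha$ be a minimal congruence of $\m a$. The quotient $\m a/\alpha$ has a Maltsev polynomial (the image of $M$), and $|A/\alpha|<|A|$, so by induction $\m a/\alpha$ is covered by its minimal sets. Minimal sets of $\m a/\alpha$ pull back: if $\bar U = \bar e(A/\alpha)$ is an $\langle\bar\alpha,\bar\beta\rangle$-minimal set and we lift $\bar e$ to an idempotent unary polynomial $e$ of $\m a$ with $e(A)/\alpha$ related to $\bar U$, then $U=e(A)$ is (essentially) a minimal set of $\m a$, or at least decomposes further over its own minimal sets by induction applied to $\m a|_U$ when $U\neq A$. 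Composing covers (using the transitivity of "covers" noted in the excerpt, and the fact that induced algebras on neighborhoods again have Maltsev polynomials and are smaller, so are covered by \emph{their} minimal sets), it suffices to cover $\m a$ by neighborhoods each of which lies above $\alpha$ in the sense that it is a union of $\alpha$-classes, together with covering within a single $\alpha$-class. The single-$\alpha$-class step is exactly the simple case: $\m a|_S$ for $S$ an $\alpha$-class is simple (since $\alpha$ is a minimal congruence and $\m a|_S$ inherits its congruence lattice behavior from $[0,\alpha]$ restricted to $S$) and has a Maltsev polynomial, so it is covered by its minimal sets, which are minimal sets of $\m a$ for the quotient $\langle 0_A,\alpha\rangle$ or below.

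The technical heart is gluing these two pieces: from "$\m a/\alpha$ is covered by minimal sets $\bar U_i$" and "each $\alpha$-class $S$ is covered by minimal sets" to "$A$ is covered by minimal sets of $\m a$." Concretely, take the covering decomposition $\bar\lambda(\bar e_1\bar\rho_1(\bar x),\dots,\bar e_k\bar\rho_k(\bar x))=\bar x$ of $\m a/\alpha$, lift all polynomials to $\m a$ (lifting $\bar e_i$ to an idempotent $e_i$ requires replacing $e_i$ by a suitable power $e_i^{t}$ so it is genuinely idempotent, using that $A/\alpha$ is finite), and obtain a polynomial map $\sigma\colon A\to A$ with $\sigma(x)\equiv_\alpha x$ for all $x$, whose image lies in a product of neighborhoods above $\alpha$. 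Then $\sigma$ moves each point within its $\alpha$-class, so one is reduced to the within-a-class problem, which is handled by the simple case. One must check the neighborhoods produced this way really are (or decompose over) $\langle\alpha',\beta'\rangle$-minimal sets of $\m a$; this uses Lemma~2.10 and the minimal-set theory of \cite{hobby-mckenzie} relating minimal sets of $\m a|_U$ and of $\m a$, plus the restriction behavior of prime quotients. I expect the main obstacle to be precisely this bookkeeping — ensuring that every neighborhood appearing in the final decomposition is genuinely a prime-quotient minimal set of the \emph{whole} algebra $\m a$ (not merely of some induced subalgebra), and handling the idempotent-lifting and the composition of the two covering witnesses into a single equation of the form \eqref{decomposition} with $f=\mathrm{id}$.
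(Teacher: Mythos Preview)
Your approach differs substantially from the paper's, and the central gluing step has a real gap.

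\textbf{The paper's argument.} The paper does \emph{not} induct through a minimal congruence. Instead it takes a neighborhood $N$ that is \emph{maximal} for the property that $\m a|_N$ is covered by its minimal sets, and shows $N=A$. If $N\neq A$, connectedness by traces gives a minimal set $U$ whose trace properly overlaps $N$; pick $0\in N\cap U$. The key technical step is a Claim producing idempotent unary polynomials $e,f$ with $e(A)=U$, $f(A)=N$, and the orthogonality $e(N)=\{0\}=f(U)$. That orthogonality is exactly what makes the Maltsev trick work: $h(x)=m(e(x),0,f(x))$ is then literally the identity on $N\cup U$, so an iterate $h^\ell$ is idempotent with range $M\supsetneq N$, and the equation $h^\ell m(e(h^{\ell-1}(x)),0,f(h^{\ell-1}(x)))=h^\ell(x)$ is a decomposition of type~\eqref{decomposition} for $\m a|_M$ through $U$ and $N$. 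Transitivity of covering then contradicts maximality of $N$. The proof of the Claim uses minimality of $U$ (and a result from~\cite{kkv2}) in an essential way.

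\textbf{Where your proposal breaks.} Your inductive scheme produces a polynomial $\sigma$ with $\sigma(x)\equiv_\alpha x$ factoring through neighborhoods $U_i$, together with separate covers of the $\alpha$-classes. You then assert that ``one is reduced to the within-a-class problem,'' but this reduction is precisely the hard part and you have not carried it out. A decomposition of type~\eqref{decomposition} with $f=\mathrm{id}$ is a single polynomial identity holding for all $x\in A$; you need one polynomial $\Lambda$ and one list of idempotents/retractions that recover $x$ exactly, not merely modulo $\alpha$ plus a per-class correction. The natural attempt---map $x$ to $(\sigma(x),\,M(0,\sigma(x),x))\in (\prod U_i)\times S_0$ and try to invert via $M(\sigma(x),0,M(0,\sigma(x),x))$---only returns $x$ when $M$ is associative (e.g.\ $M(a,b,c)=a-b+c$); for a general Maltsev polynomial it fails. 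The paper sidesteps this entirely by engineering the orthogonality $e(N)=f(U)=\{0\}$, which forces $m(e(x),0,f(x))$ to be the identity on $N\cup U$ without any associativity. Your setup gives no analogue of this orthogonality between the lifted $U_i$'s and an $\alpha$-class.

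Two smaller points: the induced algebra $\m a|_S$ on an $\alpha$-class $S$ need \emph{not} be simple (take $\alpha$ the congruence of a group corresponding to a minimal normal subgroup $(\BZ/p)^2$; the induced algebra has nontrivial congruences). You only need $|S|<|A|$ to invoke the inductive hypothesis, so this is repairable. Also, type~$\styp$ cannot occur: a Maltsev polynomial on $\m a$ yields one on every $\m a|_U$ via $eM$, and a two-element semilattice admits none. The paper handles the ``minimal sets of $\m a|_U$ are minimal sets of $\m a$'' bookkeeping you flag by observing (via Theorem~4.31 of~\cite{hobby-mckenzie}) that in the presence of a Maltsev polynomial, minimal sets are exactly the inclusion-minimal nontrivial neighborhoods.
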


\begin{proof}
Let $N\subseteq A$ be a neighborhood of
$\m a$ that is maximal for the property that
$\m a|_N$ is covered by its minimal sets.
If $N=A$, then we are done, so we assume otherwise
and argue to a contradiction.

By Theorem~4.31 of~\cite{hobby-mckenzie}, type~$\atyp$
minimal sets in an algebra with a Maltsev polynomial are
E-minimal. Hence a minimal set of a finite algebra with a
Maltsev polynomial is nothing other than a set that is
minimal under inclusion among nontrivial
neighborhoods. Therefore the minimal sets of $\m a|_N$ are
exactly the minimal sets of $\m a$ that lie in $N$.

It is a basic fact of tame congruence theory (a consequence
of Lemma~2.17 of~\cite{hobby-mckenzie}) that every finite
algebra is the connected union of the traces of its minimal
sets for congruences, hence if $N\neq A$ then there must
exist $\alpha\prec\beta$ in $\Conb(\m a)$ and an
$\lb\alpha,\beta\rb$-minimal set $U$ such that $U$ has a
trace $T\subseteq U$ that properly overlaps $N$.  (I.e.,
$T\cap N\neq\emptyset$, but $T\not\subseteq N$.)  Choose
$0\in T\cap N$. Thus $U$ properly overlaps $N$ and $U\cap N$
contains an element $0$ from (the body of) $U$.  Let
$m(x,y,z)$ denote a Maltsev polynomial of $\m a$.

\begin{clm}
$\m a$ has idempotent unary polynomials $e$
and $f$ such that 
\begin{enumerate}
\item 
$e(A) = U$ and $e(N) = \{0\}$, and
\item
$f(A) = N$ and $f(U) = \{0\}$.
\end{enumerate}
\end{clm}

Since $U$ and $N$ are neighborhoods there exist
idempotent unary polynomials $e_0$ and $f_0$
such that $e_0(A)=U$ and $f_0(A)=N$. We show that these
can be modified to have the extra properties in the claim.

Our first step is to construct $e$.  As a first case, assume
that $e_0f_0\in \Pol_1(\m a)|_U$ is not a permutation of
$U$. The polynomial $g(x)=e_0m\big(0,e_0f_0(x),e_0(x)\big)$
has the property that $g(A)\subseteq U$ and
$g(N)=\{0\}$. Take $u \in T$ such that $(u,0) \in \beta -
\alpha$. Then $\big(e_0f_0(u),0\big)\in \alpha$, and we get
$\big(g(u),u\big)\in \alpha$. Since $(0,u)\in\beta-\alpha$,
$g(0)=0$, and $g(u)\equiv u\pmod{\alpha}$ it follows that
$g(\beta)\not\subseteq \alpha$ and so $g$ is not collapsing
on $U$. Therefore an appropriate power $e = g^k$ is an
idempotent unary polynomial with range $U$ that collapses
$N$ to $\{0\}$.

Now assume that $e_0f_0$ is a permutation of $U$.
Then $U\simeq f_0(U)$, so $V := f_0(U)\subseteq N$
is an $\lb\alpha,\beta\rb$-minimal set contained in $N$ and containing $0$.
Let $S=f_0(T)$ be the $\lb\alpha,\beta\rb$-trace of $V$ containing $0$.
Corollary~4.8 of \cite{kkv2} 
guarantees that there is an idempotent unary polynomial $e_1$
of $\m a$ such that in the quotient 
$\m a/\alpha$
we have $\overline{e}_1(A/\alpha) = U/\alpha$ and 
$\overline{e}_1(S/\alpha) = \{0/\alpha\}$.
Replacing $e_1$ by $e_0e_1$ if necessary
we may assume that $e_0e_1 = e_1$, so $e_1(A)\subseteq U$.
Since $e_1$ maps $A$ into $U$ and it
is the identity modulo $\alpha$ on $U$, it
follows from the $\lb\alpha,\beta\rb$-minimality of $U$
that $e_1(A)=U$.
Now $e_1f_0$ is collapsing on~$T$, so $e_1f_0$
is not a permutation of $U$.
Thus we can repeat the argument of the first case
using~$e_1$ in place of $e_0$ to construct
an idempotent unary polynomial $e$
such that $e(A)=U$ and $e(N)=\{0\}$.

Now, given $e$ and $f_0$ as above 
let $f(x) = f_0m\big(f_0(x),m\big(e(x),x,f_0(x)\big),0\big)$.
One calculates that $f(A)\subseteq N$, $f$ is the identity
on $N$ (so $f$ is idempotent with range $N$),
and $f(U)=\{0\}$.
This completes the proof of the claim.\cqed

\medskip

Now we finish the proof of the theorem.
The polynomial $h(x)=m\big(e(x),0,f(x)\big)$ is the identity
on $N\cup U$, so some iterate $h^{\ell}(x)$
is idempotent with range $h^{\ell}(A)=M\supsetneq N$.
The equation 
\[
h^{\ell}m\big(e\big(h^{\ell-1}(x)\big),0,
f\big(h^{\ell-1}(x)\big)\big)=h^{2\ell}(x)=h^{\ell}(x)
\]
is an equation of type (\ref{decomposition})
for $\m a|_M$, showing that $\m a|_M$ is covered by its subneighborhoods
$N$ and $U$. Because the covering relation is transitive,
$\m a|_M$ is covered
by~$U$ together with the minimal sets in $N$
which cover $\m a|_N$. This shows that $\m a|_M$ is covered by the minimal
sets it contains, contradicting the maximality assumption on~$N$.
\end{proof}

\begin{cor}\label{kkv_cor}
If\/ $\m a$ is a finite solvable algebra with a
Maltsev polynomial, then $\m a$~is a spread of its
type $\atyp$ minimal sets.
\end{cor}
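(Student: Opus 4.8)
The plan is to read this off quickly from Theorem~\ref{kkv}, using two auxiliary observations: that an algebra covered by a family of minimal sets is in particular a spread of that family, and that a solvable algebra with a Maltsev polynomial has all of its prime quotients of type~$\atyp$.

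First I would invoke Theorem~\ref{kkv}: since $\m a$ has a Maltsev polynomial, it is covered by its $\langle\alpha,\beta\rangle$-minimal sets, where $\langle\alpha,\beta\rangle$ ranges over the prime quotients of $\Conb(\m a)$. Next I would unwind the definition of ``covers $\{A\}$'' exactly as in the paragraph preceding Theorem~\ref{kkv}: a witnessing instance of~(\ref{decomposition}) has $f(A)=A$ for an idempotent unary polynomial $f$, hence $f=\mathrm{id}_A$, since a surjective idempotent self-map is the identity. So~(\ref{decomposition}) reads $\lambda\big(e_1\rho_1(x),\ldots,e_k\rho_k(x)\big)=x$ for all $x\in A$, and since each $e_i\rho_i$ maps $A$ into the minimal set $U_i=e_i(A)$, letting $x$ run over $A$ yields $\lambda(U_1,\ldots,U_k)=A$. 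Thus $A$ is a spread of the minimal sets $U_1,\ldots,U_k$.

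The remaining point is that only type~$\atyp$ minimal sets occur. Here I would argue that solvability of $\m a$ makes every prime quotient of $\Conb(\m a)$ abelian, hence of type~$\utyp$ or~$\atyp$, and that the Maltsev polynomial rules out type~$\utyp$. For the latter I would pass to the constant expansion of $\m a$: it has the same polynomial clone as $\m a$, hence the same congruences, minimal sets and types, and it has a Maltsev \emph{term} (by the Remark following Definition~\ref{pointed_cube_pol}); its (locally finite) variety is therefore congruence permutable, in particular congruence modular, and so omits type~$\utyp$ by~\cite{hobby-mckenzie}. (Alternatively one can argue directly that, for $U=e(A)$ a minimal set with $e$ idempotent unary and $m$ a Maltsev polynomial of $\m a$, the polynomial $(x,y,z)\mapsto e\,m(x,y,z)$ restricts to a Maltsev polynomial of $\m a|_U$, whereas a type-$\utyp$ minimal algebra carries no Maltsev polynomial.) Consequently every prime quotient of $\Conb(\m a)$ has type~$\atyp$, so the minimal sets obtained in the previous step are type~$\atyp$ minimal sets and $\m a$ is a spread of its type~$\atyp$ minimal sets.

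Essentially all of the work lives in Theorem~\ref{kkv}, so I do not expect a serious obstacle here; the only step requiring any care is the exclusion of type~$\utyp$, i.e.\ checking that a Maltsev polynomial is incompatible with a type-$\utyp$ prime quotient, which is a routine fact of tame congruence theory that I would simply cite.
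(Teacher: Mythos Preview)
Your proposal is correct and follows essentially the same approach as the paper: invoke Theorem~\ref{kkv}, pass from ``covered'' to ``spread'' via the observation in the paragraph preceding that theorem, and note that solvability together with a Maltsev polynomial forces all prime quotients to have type~$\atyp$. The paper's proof simply asserts the last point in one line, whereas you supply (two versions of) the standard justification; both routes are sound and nothing is missing.
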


(This establishes for solvable algebras
the extra implication (i)$\Rightarrow$(iii)
among the conditions of Theorem~\ref{basic}.)

\begin{proof}
Theorem~\ref{kkv}
proves that $\m a$ is a spread of its minimal sets.
They all have type~$\atyp$, since $\m a$ is solvable
and has a Maltsev polynomial.
\end{proof}

\section{Left nilpotent algebras}\label{abel}

In the papers \cite{sym}, \cite{strnil}, \cite{nilpwab}
various forms of nilpotence are discussed. An algebra~$\m A$
is left nilpotent if $[1_{\m A},\dots,[1_{\m A},[1_{\m
  A},1_{\m A}]]\dots]=0_{\m A}$ for a sufficiently long
expression. This definition implies that the class of left
nilpotent algebras is closed under subalgebras and finite
direct products.

\begin{thm}\label{nilpotent}
  A finite left nilpotent algebra has a Maltsev polynomial
  iff it has a pointed cube polynomial.
\end{thm}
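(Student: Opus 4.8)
The forward direction is trivial: a Maltsev polynomial is a $0$-pointed, $3$-ary, $2$-cube polynomial (as noted right after Definition~\ref{pointed_cube}), so only the converse needs work. The plan is to start from an $m$-ary, $p$-pointed, $k$-cube polynomial $F$ for the finite left nilpotent algebra $\m A$ and to manufacture a Maltsev polynomial by exploiting the structure theory of nilpotent algebras. A finite left nilpotent algebra is solvable, so Theorem~\ref{1-solv} already tells us $d_{\m A}(n)\in\Theta(n)$; but more importantly, nilpotence gives us much stronger leverage than solvability: the prime quotients are all abelian, the algebra is a product (as a set, compatibly with the polynomial clone in a controlled way) over its factors, and — crucially — in a nilpotent algebra the term condition propagates, so one can ``cancel'' iterated applications of idempotent polynomials. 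The natural strategy is induction on $|A|$ via a minimal congruence $\alpha$: by the induction hypothesis $\m A/\alpha$ has a Maltsev polynomial $\bar m$, lift it arbitrarily to a polynomial $m_0$ of $\m A$, and then correct $m_0$ on the $\alpha$-classes using the cube polynomial $F$ together with abelianness of $\alpha$.

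Concretely, first I would reduce $F$ to a usable normal form: since all the ``pointed'' constants and all variables other than the distinguished $x$ can be specialized to fixed constants of $\m A$, we may assume $F(x_1,\dots,x_\ell)$ is an idempotent polynomial of $\m A$ satisfying a system of cube identities in which each column carries either $x$ or a specific constant, with every column having at least one non-$x$ entry. Idempotence forces $F$ to restrict to a polynomial on each $\alpha$-class $S$, and on the simple abelian algebra $\m A|_S$ a cube polynomial collapses to something very rigid: by the same matrix-power analysis used in Lemma~\ref{transl} (type $\utyp$ forces $F|_S$ to be a projection-like unary map composed with coordinates, type $\atyp$ forces $F|_S$ to be an affine map whose matrix coefficients sum to the identity), and the cube identities then force the relevant map on $S$ to be a Maltsev-like operation — in the affine case $M_1\wec x - M_2\wec x + \dots$ can be arranged, because having a genuine cube identity means the ``off-diagonal'' coefficient combination is invertible. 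This is where left nilpotence does the real work: it is exactly the hypothesis (via Theorem~\ref{nilpotent}'s intended companion results, e.g.\ the structure of nilpotent algebras with a cube term in \cite{strnil}) that lets one promote such fiberwise affineness to a \emph{global} affine-like coordinatization, so that a single polynomial operation is simultaneously Maltsev on every congruence class of the nilpotence series.

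The assembly step is then: take the lifted $m_0$ and the corrected fiber operation, and splice them as in the proof of Theorem~\ref{kkv} — form an iterate of a polynomial of the shape $h(x,y,z)=F^{\m A}(\dots,x,\dots,y,\dots,z,\dots)$ with constants plugged into the remaining slots so that $h$ reduces to $m_0$ modulo $\alpha$ and to the fiber-Maltsev operation on each $\alpha$-class. Using that $\alpha$ is abelian (so the term condition holds) and that $\m A/\alpha$ already has a Maltsev polynomial, one checks $h(x,y,y)\equiv x$ and $h(y,y,x)\equiv x$ hold modulo $\alpha$ and on each class, hence everywhere; then an appropriate iterate/composition cleans up the bookkeeping.

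The main obstacle, as I see it, is precisely the ``promotion'' in the middle paragraph: fiberwise one only controls $F$ on individual $\alpha$-classes, and stitching these local Maltsev operations into one polynomial that works across classes is not automatic — a priori $F$ could mix the class-coordinate with the within-class coordinate in a way that is affine on each fiber but not realizable by a single Maltsev polynomial. Left nilpotence is what rules this out (it is false for merely solvable algebras, which is why the theorem is stated for the nilpotent case), so the delicate part of the write-up will be invoking the right structural fact about finite left nilpotent algebras with a pointed cube polynomial — most likely that such an algebra is polynomially equivalent to (an expansion of) a module-like structure on which cube terms and Maltsev terms coincide — and then doing the induction cleanly.
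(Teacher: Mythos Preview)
Your approach is genuinely different from the paper's, and the gap you yourself flag as ``the main obstacle'' is real and unfilled. The paper's proof is short and indirect: pass to the constant expansion so the pointed cube polynomial becomes a pointed cube \emph{term}, hence is inherited by every algebra in $\mathcal V(\m A)$; Theorem~\ref{pointed_polynomial} then bounds growth throughout $\mathcal V(\m A)$ by a polynomial, so by Corollary~\ref{abelian_cor}(1) the variety contains no nontrivial strongly abelian algebra; finally Theorem~6.8 of \cite{kkv1} is invoked, which states that if a variety is generated by a left nilpotent algebra and contains no nontrivial strongly abelian algebra, then the generator has a Maltsev polynomial. All the structural work is outsourced to that citation.

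Your inductive construction tries instead to build the Maltsev polynomial by hand, and two things go wrong. First, you never rule out type~$\utyp$ for the prime quotient $\langle 0_{\m A},\alpha\rangle$: on a type~$\utyp$ multitrace your own matrix-power analysis shows that any idempotent polynomial acts as a coordinate shuffle, so there is no fiberwise Maltsev operation to stitch --- and note that the constants appearing in the cube identities are fixed elements of $A$, not of the class $S$, so the cube identities do not restrict to cube identities on $\m A|_S$. Excluding type~$\utyp$ here is essentially the same problem as excluding strongly abelian algebras from the variety, which is exactly what the paper's growth-rate argument accomplishes. Second, even granting type~$\atyp$ everywhere, your ``promotion'' step is not an argument: the structural fact you hope to invoke (``polynomially equivalent to a module-like structure on which cube terms and Maltsev terms coincide'') restates the conclusion, and the claim that the cube identities force an invertible off-diagonal coefficient combination on each fiber is not justified. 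The paper sidesteps both issues by trading the explicit construction for a growth-rate obstruction plus a known theorem.
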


(This establishes for nilpotent algebras
the extra implication (ii)$\Rightarrow$(i)
among the conditions of Theorem~\ref{basic}.)

\begin{proof}
  A Maltsev polynomial is a $3$-ary, $0$-pointed, $2$-cube
  polynomial. To prove the converse, we can assume that all
  elements of~$A$ are the interpretations of nullary operation
  symbols. Thus
  if $\m A$ has a pointed cube polynomial, then it is in
  fact a pointed cube term, and so it is interpreted as a
  pointed cube term in every algebra of the variety~$\vr V$
  generated by~$\m A$. Thus, Theorem~\ref{pointed_polynomial} implies
  that no algebra in~$\vr V$ can have exponential growth
  rate. However, nontrivial strongly abelian algebras have
  exponential growth rate by
  Corollary~\ref{abelian_cor}. Therefore no such algebra
  exists in~$\vr V$. We finish the proof by recalling
  Theorem~6.8 of~\cite{kkv1}, which states that if a
  variety~$\vr V$ is generated by a left nilpotent algebra,
  and there is no nontrivial strongly abelian algebra
  in~$\vr V$, then $\m A$ has a Maltsev polynomial.
\end{proof}

The following corollary follows from the fact
that if an abelian algebra has a Maltsev polynomial, then it
has a Maltsev term.

\begin{cor}\label{affine_cor}
  A finite abelian algebra has a pointed cube polynomial iff
  it is affine.
\qed
\end{cor}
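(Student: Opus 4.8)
The plan is to derive both directions from Theorem~\ref{nilpotent} together with standard facts about abelian algebras, using the hint supplied by the authors: that an abelian algebra with a Maltsev polynomial already has a Maltsev term.

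First, for the easy direction: if $\m a$ is affine, then it is polynomially equivalent to a module, so it has a Maltsev polynomial $m(x,y,z)=x-y+z$; a Maltsev polynomial is in particular a $3$-ary, $0$-pointed, $2$-cube polynomial, which is the kind of pointed cube polynomial we want. (One could alternatively just cite Corollary~\ref{pointed_polynomial_cor} indirectly, but the direct observation is cleaner.) So affine $\Rightarrow$ pointed cube polynomial, and this holds without any nilpotence hypothesis.

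For the converse, suppose $\m a$ is a finite abelian algebra with a pointed cube polynomial. An abelian algebra is left nilpotent (the commutator $[1,1]$ is already $0$, so all the iterated commutators in the definition of left nilpotence collapse at the first step). Hence Theorem~\ref{nilpotent} applies and tells us that $\m a$ has a Maltsev polynomial $m(x,y,z)$. Now invoke the stated fact that a finite abelian algebra with a Maltsev polynomial has an actual Maltsev \emph{term}. An algebra with a Maltsev term is congruence permutable, and an abelian algebra that is congruence permutable (equivalently, that has a Maltsev term) is affine — this is the classical characterization of affine algebras via the term condition (see, e.g., the abelian $\Leftrightarrow$ quasi-affine results together with the upgrade to affine in the presence of a Maltsev term). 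Therefore $\m a$ is affine, completing the converse.

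I expect the main obstacle — or rather the main thing to be careful about — is not a deep step but the bookkeeping of which facts may be cited. Theorem~\ref{nilpotent} is available from the excerpt, but the two supporting facts ("abelian $\Rightarrow$ left nilpotent" and "abelian + Maltsev term $\Rightarrow$ affine") are classical and not proved here; the abelian-implies-left-nilpotent observation is immediate from the commutator definitions, and the Maltsev-term-to-affine implication is the standard theorem that an abelian algebra in a congruence permutable variety is affine. So the real content is just assembling Theorem~\ref{nilpotent} with these standard facts, and the proof is short once that is recognized; the authors indeed flag the single nontrivial ingredient ("if an abelian algebra has a Maltsev polynomial, then it has a Maltsev term") in the sentence preceding the statement, which makes clear the intended route.
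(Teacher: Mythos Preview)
Your proposal is correct and follows exactly the route the paper intends: apply Theorem~\ref{nilpotent} (using that abelian $\Rightarrow$ left nilpotent) to get a Maltsev polynomial, then invoke the fact flagged just before the corollary to upgrade to a Maltsev term and hence to affineness, with the reverse direction being immediate since $x-y+z$ is a pointed cube polynomial. There is nothing to add; your write-up matches the paper's implicit argument.
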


Our next goal is to prove the following theorem.

\begin{thm}\label{spreadnilp}
  If $\m A$ is a finite, left nilpotent algebra, and $\m
  A^{|A|}$ does not have a nontrivial strongly abelian
  quotient algebra, then $\m A$ is a spread of its
  type~$\atyp$ minimal sets.
\end{thm}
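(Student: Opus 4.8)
The plan is to argue by induction on $|A|$, imitating the structure of the proof of Theorem~\ref{kkv} but replacing the Maltsev polynomial with the weaker structural input available for a left nilpotent algebra whose powers have no nontrivial strongly abelian quotient. First I would record the two facts that make left nilpotence useful here: the class of left nilpotent algebras is closed under subalgebras, quotients by the relevant congruences, and finite powers, and — crucially — a finite left nilpotent algebra with no nontrivial strongly abelian quotient on its $|A|$th power has a Maltsev polynomial by the combination of Theorem~\ref{nilpotent} and Theorem~6.8 of~\cite{kkv1} (this is essentially the content of the proof of Theorem~\ref{nilpotent}). The subtlety is that we are not told $\m A$ itself has a Maltsev polynomial; we only know $\m A^{|A|}$ has no strongly abelian quotient, and we must be careful about what passes to neighborhoods and to factors.

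The key reduction is to show that, under the hypothesis, every prime quotient $\lb\alpha,\beta\rb$ of $\Conb(\m A)$ has type $\atyp$ — i.e.\ no type $\utyp$ prime quotients occur. Here I would use that a type $\utyp$ prime quotient would produce, via the minimal set machinery, a nontrivial strongly abelian induced algebra $\m A|_U$ on a minimal set, and then via Corollary~\ref{abelian_cor}(1) and an appropriate power this would contradict the hypothesis; one must check the bookkeeping that the offending strongly abelian quotient indeed appears as a quotient of some $\m A^n$ with $n\le |A|$ (using that $\m A$ has a $\lb\alpha,\beta\rb$-minimal set $U$ with $|U|\le|A|$ and that the tolerance/congruence generated by the trace relation on $U^k$ has the right structure — this is where the precise bound $|A|$ enters). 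Once all prime quotients are type $\atyp$, the induced algebras on the type $\atyp$ minimal sets are affine (polynomially equivalent to modules), and, more importantly, every finite left nilpotent algebra all of whose prime quotients are type $\atyp$ has a Maltsev polynomial — again via Theorem~6.8 of~\cite{kkv1}, since type $\utyp$ is exactly the obstruction coming from strongly abelian behavior. At that point $\m A$ has a Maltsev polynomial, and since $\m A$ is solvable (left nilpotent implies solvable), Corollary~\ref{kkv_cor} gives directly that $\m A$ is a spread of its type $\atyp$ minimal sets, finishing the proof.

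An alternative, more self-contained route would be to run the maximal-neighborhood induction of Theorem~\ref{kkv} directly: take $N\subseteq A$ maximal such that $\m A|_N$ is a spread (or better, covered) by its type $\atyp$ minimal sets, and if $N\ne A$ pick a type $\atyp$ minimal set $U$ with a trace properly overlapping $N$; the construction of the idempotent polynomials $e,f$ with $e(A)=U$, $e(N)=\{0\}$, $f(A)=N$, $f(U)=\{0\}$ and the gluing polynomial $h(x)=m\big(e(x),0,f(x)\big)$ go through verbatim, provided we have the Maltsev polynomial $m$. So really the whole theorem rests on promoting the hypothesis ``$\m A^{|A|}$ has no strongly abelian quotient'' to ``$\m A$ has a Maltsev polynomial,'' and then quoting Corollary~\ref{kkv_cor}.

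The main obstacle I expect is precisely that promotion step — more specifically, verifying that the absence of a strongly abelian quotient on the single power $\m A^{|A|}$ is enough to rule out \emph{every} type $\utyp$ prime quotient of $\m A$. One has to show that a type $\utyp$ prime quotient does not merely give a strongly abelian piece somewhere in the variety generated by $\m A$, but an actual strongly abelian homomorphic image of $\m A^n$ for some $n$ bounded by $|A|$; this uses that minimal sets have size at most $|A|$ together with the description (for left nilpotent algebras) of how strongly abelian behavior in a prime quotient propagates to a quotient of a small power. Getting the exponent bounded by $|A|$ rather than by something larger is the delicate part and is presumably where the specific value $|A|$ in the statement comes from. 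Once that is in hand, Theorem~6.8 of~\cite{kkv1} and Corollary~\ref{kkv_cor} do the rest.
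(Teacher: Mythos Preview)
Your approach has a genuine gap: the ``promotion step'' from the hypothesis to a Maltsev polynomial on $\m A$ is false, not merely delicate. Example~\ref{abelian_spread_not_maltsev} exhibits a finite abelian (hence left nilpotent) algebra $\m A$ with linear growth rate --- so no finite power of $\m A$ has a nontrivial strongly abelian quotient, in particular $\m A^{|A|}$ does not --- and yet $\m A$ has no Maltsev polynomial. Thus the hypothesis of Theorem~\ref{spreadnilp} forces (iii) but not (i); for left nilpotent algebras these are provably inequivalent (this is exactly the content of item~(2) of Theorem~\ref{main}). Your invocation of Theorem~6.8 of~\cite{kkv1} overshoots: that theorem requires the whole \emph{variety} generated by $\m A$ to contain no nontrivial strongly abelian algebra, not merely the powers $\m A^n$ with $n\le|A|$, and those conditions are not equivalent. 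Consequently neither your main route nor your ``alternative'' route (which also needs a Maltsev polynomial $m$ to build the gluing map $h$) can succeed.

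The paper's argument never produces a Maltsev polynomial. It works in $\m A^n$ with $n=|A|$, lets $\m B$ be the subalgebra generated by all $U^n$ with $U$ ranging over representatives of the type~$\atyp$ minimal sets, and notes $B=\bigcup_t\bigl(t^{\m A}(U_1,\ldots,U_k)\bigr)^n$. If $B=A^n$, the tuple listing all elements of $A$ lies in some $\bigl(t^{\m A}(U_1,\ldots,U_k)\bigr)^n$, so $t^{\m A}(U_1,\ldots,U_k)=A$ and $\m A$ is a spread; this is where the exponent $|A|$ is used. If $B\ne A^n$, choose a maximal subalgebra $\m M\supseteq B$; left nilpotence (via the Hamiltonian property of~\cite{ham}) makes $M$ a block of some congruence $\mu$. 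Any type~$\atyp$ cover $\delta\prec\theta$ above $\mu$ would have, by Corollary~\ref{type2}, a $\lb\delta,\theta\rb$-minimal set of the form $U_i^n\subseteq B\subseteq M$, which is impossible since $M$ is a single $\mu$-class. Hence $\m A^n/\mu$ is strongly solvable and has a nontrivial strongly abelian simple quotient, contradicting the hypothesis.
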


(This establishes for nilpotent algebras
the extra implication (vi)$\Rightarrow$(iii)
among the conditions of Theorem~\ref{basic}.)

\begin{lm}\label{proj}
  Let $\m A$ be a finite solvable algebra and
  $\eta_1,\ldots,\eta_n$ congruences of~$\m A$. Then for
  each type~$\atyp$ covering~$\delta\prec\theta$ in $\Con(\m
  A)$ such that $\bigwedge_{i=1}^n \eta_i\le\delta$ there is
  an $i$ and congruences $\eta_i\le\alpha\prec\beta$ such
  that $\lb\delta,\theta\rb$ and $\lb\alpha,\beta\rb$ are
  projective in~$\Con(\m A)$ (and hence have the same
  minimal sets).
\end{lm}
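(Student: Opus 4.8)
The plan is to push the prime quotient $\langle\delta,\theta\rangle$ through $\Con(\m{A})$ by a finite chain of perspectivities until it lands on a prime quotient $\langle\alpha,\beta\rangle$ whose bottom lies \emph{above} one of the $\eta_i$; that $i$ is then the one we want. No induction on $|A|$ is needed for the main argument — the process terminates for a purely lattice‑theoretic reason. Everything hinges on one structural input, which is the only place solvability enters: \emph{in a finite solvable algebra a prime quotient of type~$\atyp$ is perspectivity‑stable}, i.e.\ it behaves modularly in $\Con(\m{A})$. Explicitly, if $\langle\gamma,\tau\rangle$ has type~$\atyp$ and $\psi\in\Con(\m{A})$, then (up) $\psi\wedge\tau\le\gamma$ implies $(\psi\vee\gamma)\wedge\tau=\gamma$, whence $\tau\not\le\psi\vee\gamma$ and $\langle\psi\vee\gamma,\psi\vee\tau\rangle$ is a prime quotient of type~$\atyp$ perspective to $\langle\gamma,\tau\rangle$; and (down) if $\mu\le\tau$ with $\mu\vee\gamma=\tau$, $\mu\not\le\gamma$ and $\mu<\tau$, then $\langle\mu\wedge\gamma,\mu\rangle$ is a prime quotient of type~$\atyp$ perspective to $\langle\gamma,\tau\rangle$. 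I would record this as a separate lemma; for solvable algebras it follows from tame congruence theory — e.g.\ by reducing with induction on $|A|$ to the case of a minimal abelian congruence, or by appealing to the restrictions on type‑labelled congruence lattices in \cite{hobby-mckenzie} which prevent a type‑$\atyp$ prime quotient from sitting badly inside an $N_5$. Since perspective — hence projective — prime quotients have the same minimal sets, this also supplies the parenthetical assertion of the lemma.

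Granting this fact, I would construct prime quotients $\langle\delta_0,\theta_0\rangle,\langle\delta_1,\theta_1\rangle,\dots$, all of type~$\atyp$, all projective to $\langle\delta,\theta\rangle$, and all with $\bigwedge_i\eta_i\le\delta_k$, starting from $\langle\delta_0,\theta_0\rangle=\langle\delta,\theta\rangle$. Given $\langle\delta_k,\theta_k\rangle$: from $\bigwedge_i\eta_i\le\delta_k<\theta_k$ we get $\theta_k\not\le\bigwedge_i\eta_i$, so fix $j$ with $\theta_k\not\le\eta_j$ and set $\psi=\eta_j$. As $\delta_k\prec\theta_k$, the congruence $(\psi\wedge\theta_k)\vee\delta_k$ equals $\delta_k$ or $\theta_k$. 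In the first case $\psi\wedge\theta_k\le\delta_k$, so by the ``up'' part $\langle\psi\vee\delta_k,\psi\vee\theta_k\rangle$ is a prime quotient perspective to $\langle\delta_k,\theta_k\rangle$, hence projective to $\langle\delta,\theta\rangle$, and its bottom $\psi\vee\delta_k$ contains $\psi=\eta_j$; we stop with $i=j$, $\alpha=\psi\vee\delta_k$, $\beta=\psi\vee\theta_k$. In the second case $\mu:=\psi\wedge\theta_k$ satisfies the hypotheses of the ``down'' part ($\mu\vee\delta_k=\theta_k$, $\mu\not\le\delta_k$, and $\mu<\theta_k$ since $\theta_k\not\le\psi$), so $\langle\mu\wedge\delta_k,\mu\rangle=\langle\eta_j\wedge\delta_k,\eta_j\wedge\theta_k\rangle$ is a type~$\atyp$ prime quotient perspective to $\langle\delta_k,\theta_k\rangle$; set $\langle\delta_{k+1},\theta_{k+1}\rangle=\langle\eta_j\wedge\delta_k,\eta_j\wedge\theta_k\rangle$. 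The invariants persist — it is projective to $\langle\delta,\theta\rangle$, of type~$\atyp$, and $\bigwedge_i\eta_i\le\delta_k\wedge\eta_j=\delta_{k+1}$ — and $\theta_{k+1}=\eta_j\wedge\theta_k<\theta_k$ strictly.

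Because the $\theta_k$ strictly decrease in the finite lattice $\Con(\m{A})$, the second case occurs only finitely often, so the construction halts, and it can halt only in the first case, which yields the desired $i$, $\alpha$, $\beta$. The main obstacle is the structural fact about type‑$\atyp$ prime quotients; the rest is bookkeeping. Of its two halves, the ``down'' direction is the subtle one: one must show that when $\langle\delta_k,\theta_k\rangle$ is transposed down it remains a \emph{covering}, equivalently that a type‑$\atyp$ covering of a finite solvable algebra cannot be the short side of an $N_5$ in its congruence lattice — and it is exactly this kind of non‑modular configuration that solvability rules out.
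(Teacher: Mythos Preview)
Your overall strategy---push the type~$\atyp$ prime quotient $\langle\delta,\theta\rangle$ through a finite chain of perspectivities until it lands above some $\eta_i$---is exactly the paper's, and your termination argument via a strictly decreasing $\theta_k$ is fine. The difference lies entirely in how the key structural input is packaged, and there your version is both stronger than needed and not adequately justified.

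You assert that for a type~$\atyp$ cover $\gamma\prec\tau$ in a finite solvable algebra, both the up-transpose $\langle\psi\vee\gamma,\psi\vee\tau\rangle$ (when $\psi\wedge\tau\le\gamma$) and the down-transpose $\langle\mu\wedge\gamma,\mu\rangle$ (when $\mu\vee\gamma=\tau$, $\mu<\tau$) are again \emph{covers}. You flag the ``down'' half as the subtle one, but in fact both halves amount to saying that a type~$\atyp$ cover cannot be one side of an $N_5$ in $\Con(\m A)$, and you do not prove this---you only gesture at tame congruence theory. Solvability does not obviously rule out such pentagons: what one gets from the modularity of $\Con(\m A)$ modulo the strong solvability congruence is only that in the transposed interval there is a single ``type~$\atyp$ step'' with the rest strongly solvable, not that the interval has length one. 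The paper sidesteps this entirely. Its structural input is the single observation that for any $\gamma$, either $\theta\wedge(\gamma\vee\delta)\ne\theta$ or $\delta\vee(\gamma\wedge\theta)\ne\delta$; if both failed then $\delta$ and $\theta$ would be identified modulo strong solvability (by Theorem~7.7\,(4) of \cite{hobby-mckenzie}, which makes that quotient modular), contradicting $\text{typ}(\delta,\theta)=\atyp$. This yields perspectivity of \emph{intervals}, not covers, and the paper then simply chooses an upper cover of $\eta_i\wedge\delta$ inside $[\eta_i\wedge\delta,\eta_i\wedge\theta]$ before continuing---this cover is still perspective to $\langle\delta,\theta\rangle$ because $\delta\prec\theta$. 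The paper also iterates systematically through $\eta_1,\ldots,\eta_n$ rather than choosing $j$ dynamically, which bounds the number of steps by $n$ and makes the endgame transparent: if every step goes ``down'' one reaches $\delta_n=\big(\bigwedge_i\eta_i\big)\wedge\delta$ with $\theta_n\le\big(\bigwedge_i\eta_i\big)\wedge\theta=\delta_n$, a contradiction. Your argument becomes correct with the same fix (replace $\theta_{k+1}$ by an upper cover of $\delta_{k+1}$ below $\eta_j\wedge\theta_k$; termination via $\theta_{k+1}<\theta_k$ still holds), but as written it rests on an unproven and possibly over-strong lemma.
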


\begin{proof}
  Let $\gamma$ be an arbitrary congruence of~$\m A$. We
  prove that either
  $\lb\gamma\vee\delta,\gamma\vee\theta\rb$ or
  $\lb\gamma\wedge\delta,\gamma\wedge\theta\rb$ is
  perspective to~$\lb\delta,\theta\rb$. Indeed, since
  $\delta\prec\theta$, it is sufficient to prove that
  $\theta\wedge(\gamma\vee\delta)\ne\theta$ or
  $\delta\vee(\gamma\wedge\theta)\ne\delta$. Suppose that
  both inequalities fail.
  The quotient lattice of $\Conb(\m A)$
  modulo the strongly solvability congruence is modular by
  Theorem~7.7~(4) of~\cite{hobby-mckenzie}. Therefore
  $\theta=\theta\wedge(\gamma\vee\delta)$ and
  $\delta=\delta\vee(\gamma\wedge\theta)$ are related by the
  strongly solvability congruence. This contradicts the
  fact that $\lb\delta,\theta\rb$ has type~$\atyp$.

  Now apply this observation to $\gamma=\eta_1$. Either the
  statement of the lemma is satisfied with $i=1$, or
  $\lb\eta_1\wedge\delta,\eta_1\wedge\theta\rb$ is
  perspective to~$\lb\delta,\theta\rb$. Let
  $\delta_1=\eta_1\wedge\delta$ and $\theta_1$ be any upper
  cover of $\delta_1$ that is below
  $\eta_1\wedge\theta$. Then $\lb\delta_1,\theta_1\rb$ is
  still perspective to $\lb\delta,\theta\rb$, since
  $\delta\vee\theta_1=\theta$, but it is now a prime
  quotient. Perspective prime quotients have the same type,
  so $\lb\delta_1,\theta_1\rb$ also has type~$\atyp$.

  Now apply the previous observation to $\eta_2$ and this
  new quotient. Again, either the statement of the lemma
  holds for $i=2$, or we can push our cover down below
  $\eta_2$. Continuing this process, if the statement of the
  lemma fails for every~$i$, then we get a prime quotient
  $\lb\delta_n,\theta_n\rb$ that is still projective
  to~$\lb\delta,\theta\rb$, and
  $\delta_n=\big(\bigwedge_{i=1}^n\eta_i\big)\wedge\delta$,
  while $\theta_n\le
  \big(\bigwedge_{i=1}^n\eta_i\big)\wedge\theta$.  But our
  assumption that $\bigwedge_{i=1}^n \eta_i\le\delta$
  implies that $\theta_n=\delta_n=\bigwedge_{i=1}^n \eta_i$,
  which is a contradiction.
\end{proof}

\begin{cor}\label{type2}
  Let $\m A$ be a finite solvable algebra and $\mathcal U$ a
  set containing exactly one member from each polynomial
  isomorphism class of type $\atyp$ minimal sets of $\m A$.
  If\/ $\delta\prec\theta$ is a covering of type $\atyp$ in
  $\Conb(\m A^n)$, then there is a $\lb
  \delta,\theta\rb$-minimal set of the form $U^n$ for some
  $U\in \mathcal U$.
\end{cor}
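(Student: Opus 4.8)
The plan is to use Lemma~\ref{proj} to replace $\lb\delta,\theta\rb$ by a projective prime quotient lying \emph{above one of the coordinate kernels} of $\m A^n$, and then to produce the required minimal set explicitly as a Cartesian power of a type~$\atyp$ minimal set of $\m A$.

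\textbf{Step 1: reduction via Lemma~\ref{proj}.} Let $\eta_i\in\Con(\m A^n)$ be the kernel of the $i$-th projection $\pi_i\colon\m A^n\to\m A$, so that $\bigwedge_{i=1}^n\eta_i=0_{\m A^n}\le\delta$. Applying Lemma~\ref{proj} to the congruences $\eta_1,\ldots,\eta_n$ and the type~$\atyp$ covering $\delta\prec\theta$, I get an index $i$ and congruences $\eta_i\le\alpha\prec\beta$ such that $\lb\delta,\theta\rb$ and $\lb\alpha,\beta\rb$ are projective in $\Con(\m A^n)$; in particular they have the same minimal sets and $\lb\alpha,\beta\rb$ again has type~$\atyp$. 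Thus it suffices to exhibit an $\lb\alpha,\beta\rb$-minimal set of $\m A^n$ of the form $U^n$ with $U\in\mathcal U$.

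\textbf{Step 2: the candidate $U^n$.} Since $\eta_i\le\alpha\prec\beta$ and $\pi_i$ induces an isomorphism $\m A^n/\eta_i\cong\m A$, the pair $\lb\alpha,\beta\rb$ corresponds to a type~$\atyp$ prime quotient $\lb\alpha^\ast,\beta^\ast\rb$ of $\m A$. Any two $\lb\alpha^\ast,\beta^\ast\rb$-minimal sets of $\m A$ are polynomially isomorphic, so the unique member $U\in\mathcal U$ of that polynomial isomorphism class is itself such a minimal set; fix an idempotent unary polynomial $e$ of $\m A$ with $e(A)=U$. Then the diagonal lift $(e,\ldots,e)\in\Pol_1(\m A^n)$ is idempotent with range $U^n$, so $U^n$ is a neighborhood of $\m A^n$. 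Moreover $\alpha|_{U^n}\ne\beta|_{U^n}$: since $\alpha,\beta\supseteq\eta_i$, two tuples of $U^n$ are $\alpha$- (resp.\ $\beta$-) related exactly when their $i$-th coordinates are $\alpha^\ast$- (resp.\ $\beta^\ast$-) related, and $\alpha^\ast|_U\ne\beta^\ast|_U$. So $U^n$ contains an $\lb\alpha,\beta\rb$-minimal set, and it remains only to prove that $U^n$ is itself minimal.

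\textbf{Step 3: minimality (the crux).} Suppose $V\subsetneq U^n$ is a neighborhood of $\m A^n$ with $\alpha|_V\ne\beta|_V$; I must derive a contradiction. Write $V=f(A^n)$ for an idempotent $f=(f_1,\ldots,f_n)\in\Pol_1(\m A^n)$. Since every polynomial of $\m A^n$ is a term of $\m A$ applied coordinatewise to the variables and to constant tuples from $A^n$, there is a single term $Q$ of $\m A$ and tuples $\bar c^{(k)}$ of elements of $A$ with $f_k=Q^{\m A}(\,\cdot\,,\bar c^{(k)})$ for each $k$; each $f_k$ is an idempotent unary polynomial of $\m A$ with range $U_k:=f_k(A)\subseteq U$, and $V=U_1\times\cdots\times U_n$. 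From $\alpha|_V\ne\beta|_V$ and $\alpha,\beta\supseteq\eta_i$ one extracts a pair in $\beta^\ast\setminus\alpha^\ast$ lying in $U_i\times U_i$, so $\beta^\ast|_{U_i}\ne\alpha^\ast|_{U_i}$, and then $U_i=U$ by $\lb\alpha^\ast,\beta^\ast\rb$-minimality of $U$; hence $f_i=Q^{\m A}(\,\cdot\,,\bar c^{(i)})$ is an idempotent retraction of $A$ onto $U$. The heart of the proof is to conclude from this that every other specialization $f_j=Q^{\m A}(\,\cdot\,,\bar c^{(j)})$ is also surjective onto $U$, forcing $U_j=U$ and hence $V=U^n$. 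This is where solvability enters: moving the parameter tuple from $\bar c^{(i)}$ to $\bar c^{(j)}$ one coordinate at a time, I would argue that in a finite solvable algebra a term that specializes (in its first variable) to a bijection of $U$ cannot, after a single change of one parameter, specialize to a self-map of $U$ with strictly smaller image --- otherwise there would be a pair of elements of $U$ that one of the two consecutive specializations separates while the other merges, contradicting the strong connectivity of the relevant translation-digraph given by Theorem~\ref{solvcharthm}. Once $V=U^n$ is established, $U^n$ is an $\lb\alpha,\beta\rb$-minimal set, hence, by Step~1, an $\lb\delta,\theta\rb$-minimal set.

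\textbf{Expected main obstacle.} The routine parts (Steps 1 and 2) are immediate. The real work is in Step 3: turning "the image of a specialization of $Q$ strictly shrinks after changing one parameter" into an explicit term-condition failure that contradicts solvability, and arranging all the auxiliary polynomials so that they act on the neighborhood $U^n$ (equivalently, on $\m A|_U$, or its power), so that Theorem~\ref{solvcharthm} is actually applicable.
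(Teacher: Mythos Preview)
Your Steps~1 and~2 are correct and coincide with the paper's argument. The divergence is entirely in Step~3, where the paper bypasses all of your hand-to-hand combat by invoking a single structural fact: since $\m A$ is solvable, the type~$\atyp$ minimal set $U$ has empty tail (Lemma~4.27(4) of \cite{hobby-mckenzie}), so $\m A|_U$ is an E-minimal algebra of type~$\atyp$; and by Lemma~4.10 of \cite{sym}, \emph{powers of solvable E-minimal algebras are E-minimal}. Hence $\m A^n|_{U^n}$, being polynomially equivalent to $(\m A|_U)^n$, is E-minimal. An E-minimal algebra has no proper nontrivial neighborhoods, so any neighborhood $V\subsetneq U^n$ is a singleton and cannot separate $\alpha$ from $\beta$. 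Combined with your items (a) $U^n$ is a neighborhood and (c) $\alpha|_{U^n}\ne\beta|_{U^n}$, this makes $U^n$ an $\lb\alpha,\beta\rb$-minimal set in one line.

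Your own approach to Step~3 has a genuine gap, which you correctly flag in your ``Expected main obstacle.'' The plan is to walk from the parameter tuple $\bar c^{(i)}$ to $\bar c^{(j)}$ one coordinate at a time and detect, at the first step where the range drops, a failure of solvability via Theorem~\ref{solvcharthm}. But the intermediate specializations $Q^{\m A}(\,\cdot\,,\bar c')$ need not map $U$ into $U$ and need not be idempotent, so you cannot stay inside $\Pol_1(\m A|_U)$ along the walk; and Theorem~\ref{solvcharthm} only speaks about \emph{idempotent} polynomials of an induced algebra on a neighborhood. Even granting that you could massage the polynomials to land in $U$ (say by post-composing with $e$), it is not clear what idempotent polynomial $p$ of $\m A|_U$ you would produce whose translation-digraph $\Tr(p)$ fails strong connectivity: a drop in range size under a single parameter change does not by itself yield such a $p$. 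What you would really need is that every non-constant unary polynomial of $\m A|_U$ is a permutation --- i.e., exactly the E-minimality of $\m A|_U$ --- and proving that from scratch via Theorem~\ref{solvcharthm} would amount to re-deriving the cited results rather than avoiding them. The clean fix is simply to quote E-minimality as the paper does.
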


\begin{proof}
  Denote by $\eta_1,\ldots,\eta_n$ the projection kernels
  of~$\m A^n$, and apply Lemma~\ref{proj} to this set of
  congruences on~$\m A^n$. We get that the covering
  $\delta\prec\theta$ is projective to a covering
  $\alpha\prec\beta$ which lies above some $\eta_i$.  Thus
  the $\lb\delta,\theta\rb$-minimal sets are the same as the
  $\lb\alpha,\beta\rb$-minimal sets.  Identifying $\m a$
  with $\m A^n/\eta_i$, choose some $U\in\mathcal U$ that is
  an $\lb \alpha/\eta_i,\beta/\eta_i\rb$-minimal set.  Then
  (a) $U^n$ is the image of an idempotent unary polynomial
  of~$\m a^n$, (b) $\m a^n|_{U^n}$ is an E-minimal algebra
  of type $\atyp$ (since $\m a^n|_{U^n}$ is polynomially
  equivalent to $(\m a|_U)^n$ and powers of solvable
  E-minimal algebras are E-minimal, according to Lemma 4.10
  of \cite{sym}), and (c) $\alpha|_{U^n}\neq \beta|_{U^n}$.
  Items (a)--(c) are enough to show that $U^n$ is a minimal
  set for $\lb \alpha,\beta\rb$ and hence for
  $\lb\delta,\theta\rb$.
\end{proof}

It is proved in~\cite{sym} that left nilpotence implies the
following, weaker condition:
\begin{equation}\label{tracenilp}
\text{$\NC(1_{\m A},N^2;\delta)$
holds whenever $\delta\prec\theta$
and $N$ is a $\lb\delta,\theta\rb$-trace.}
\tag{$\dag$}
\end{equation}
This condition is clearly still stronger than
solvability. One of the main results of~\cite{ham} is that
in an algebra satisfying~(\ref{tracenilp}), every maximal
subalgebra is a block of a congruence. We shall use this
result in the following proof.

\begin{proof}[Proof of Theorem~\ref{spreadnilp}]
  Let $\mathcal U=\{U_1, \ldots, U_k\}$ be a set containing
  exactly one member from each polynomial isomorphism class
  of type $\atyp$ minimal sets of $\m A$. Let $n=|A|$ and let
  $\m B$ be the subalgebra of $\m A^n$ generated by all sets of
  the form $U^n$, where $U\in\mathcal U$. Thus $B$ is the
  union of the sets $t^{\m A^n}(U_1^n,U_2^n,\ldots,U_k^n)$,
  where $t$ is a term of~$\m A$. Clearly 
  $t^{\m A^n}(U_1^n,U_2^n,\ldots,U_k^n) = 
  \left(t^{\m A}(U_1,U_2,\ldots,U_k)\right)^n$.

  There are two cases. If $B=A^n$, then let $\wec{a}$ be a
  listing of all elements of~$\m A$. Then $\wec{a}\in
  A^n=B$, and so there exists a term $t$ such that
  $\wec{a}\in t^{\m A}(U_1,U_2,\ldots,U_k)^n$. Then $t^{\m
    A}(U_1,U_2,\ldots,U_k)=A$, and so $\m A$ is a spread
  of~$\mathcal U$.

  In the other case, $\m B$ is a proper subalgebra of~$\m
  A^n$. Let $\m M$ be a maximal subalgebra of~$\m A^n$
  containing~$B$. Since $\m A^n$ is left nilpotent, the
  result quoted above implies that $M$ is a block of some
  congruence~$\mu$ of~$\m A^n$. We shall prove that $\m
  A^n/\mu$ is strongly solvable. This is sufficient, since
  then the quotient modulo any maximal congruence
  containing~$\mu$ is strongly abelian.

  Suppose that there is a prime
  quotient~$\mu\le\delta\prec\theta$ of
  type~$\atyp$. Corollary~\ref{type2} states that some
  $U_i^n$ is a minimal set for $\lb\delta,\theta\rb$. This
  is a contradiction, since $U_i^n\subseteq B\subseteq M$ is
  contained in a single $\mu\le\delta$-class. Thus the proof
  of~Theorem~\ref{spreadnilp} is complete.
\end{proof}

\section{Abelian varieties}\label{abvar}

We show that in an abelian variety, any algebra that is a
spread of affine subsets is affine. Note that if $\m A$ is a
finite abelian algebra, $\lb\alpha,\beta\rb$ is a prime
quotient of type~$\atyp$, and $U\in M_{\m A}(\alpha,\beta)$,
then the induced algebra~$\m A|_U$ is affine. Indeed, $\m A$
is solvable, so Lemma~4.27~(4) of \cite{hobby-mckenzie}
implies that the tail of~$U$ is empty. By Theorem~4.31
of~\cite{hobby-mckenzie}, the induced algebra on~$U$ is
Maltsev (and E-minimal), and as $\m A$ is abelian, it is
affine.

Recall that an abelian group operation on an algebra $\m A$
is \textsl{compatible}, if adding $x-y+z$ as a term makes
$\m A$ an affine algebra.

\begin{thm}\label{spreadaff}
  Let $\m A$ be an algebra and $U_1,\ldots,U_k\subseteq A$
  such that $A=t(U_1,\ldots,U_k)$ for a polynomial~$t$
  of~$\m A$. If all induced algebras $\m A|_{U_i}$ are
  affine, then the following hold.
\begin{enumerate}
\item If\/ $\Ho(\m A^2)$ is abelian, then there is an abelian
  group operation~$+$ on~$A$ that is compatible with all
  operations of~$\m A$. Moreover, if $\alpha\in\Con(\m A)$,
  then $\alpha$ is a congruence of the group $(A, +)$.
\item If the variety ${\mathcal V}(\m A)$ generated by $\m A$ 
is abelian, then $\m A$ is affine.
\end{enumerate}
\end{thm}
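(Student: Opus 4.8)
The plan is to prove both parts together by first manufacturing a candidate group operation out of the affine structure on the pieces $U_i$ and the spreading polynomial $t$, and then checking compatibility. For each $i$, since $\m A|_{U_i}$ is affine, fix a Maltsev polynomial $d_i(x,y,z)$ witnessing this; after composing with the idempotent polynomial whose range is $U_i$, I may assume $d_i$ maps $A^3$ into $U_i$ and restricts to $x-y+z$ in an abelian group structure on $U_i$. The key idea is that $t$ transports these Maltsev operations: define
\[
d(x_1,\ldots,x_k,\,y_1,\ldots,y_k,\,z_1,\ldots,z_k)=
t\bigl(d_1(x_1,y_1,z_1),\ldots,d_k(x_k,y_k,z_k)\bigr),
\]
and then (using that $A=t(U_1,\ldots,U_k)$, so every $a\in A$ has a representative tuple in $\prod U_i$) push this down to a ternary polynomial $m(x,y,z)$ on $A$ that satisfies the Maltsev identities $m(x,y,y)=x=m(y,y,x)$. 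This is the point where I expect to have to work: representatives of an element of $A$ in $\prod U_i$ are not unique, so I must show that $m$ is \emph{well defined} on $A$, and for that I need the term condition — i.e., I need to know that $\m A$ (or at least the relevant quotient) is abelian, which is exactly where the hypotheses ``$\Ho(\m A^2)$ abelian'' in (1) and ``${\mathcal V}(\m A)$ abelian'' in (2) enter. The abelianness will let me change one block of coordinates at a time without changing the value of $t$, which is what makes $m$ independent of the chosen representatives.

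Once a Maltsev polynomial $m$ on $\m A$ is in hand, part (1) proceeds as follows. Because $\m A$ has a Maltsev polynomial and (by hypothesis) is abelian, standard commutator theory (Herrmann's theorem; here available since $\Ho(\m A^2)$ abelian forces $\m A$ itself abelian and the relevant modularity) shows $\m A$ is affine over some module, so there is an abelian group operation $+$ with $m(x,y,z)=x-y+z$, and every basic operation of $\m A$ becomes an affine map. Concretely I would verify directly that $x+y:=m(x,0,y)$ for a fixed $0$ is associative and commutative using the term condition again, and that each basic operation $f$ of $\m A$ is compatible with $+$ because $f$ commutes with $m$ modulo the abelianness of $\m A$. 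The ``Moreover'' clause is then automatic: any $\alpha\in\Con(\m A)$ is compatible with $m$, hence with the derived $+$, so $\alpha$ is a subgroup/congruence of $(A,+)$.

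For part (2), abelianness of the whole variety ${\mathcal V}(\m A)$ is a strictly stronger input: it guarantees not merely that $\m A$ is abelian but that $\m A$ has a \emph{term} (not just polynomial) condition, and — more to the point — it lets me run the representative-independence argument uniformly in all of ${\mathcal V}(\m A)$, so that the well-definedness of $m$ upgrades to $m$ being (equivalent to) a \emph{term} operation, and the group operation $+$ is term-definable and compatible. Then $\m A$ with $+$ adjoined has $x-y+z$ as a term and is abelian with a Maltsev term, hence affine, which is the definition of a compatible abelian group operation — so $\m A$ is affine. The main obstacle throughout is the well-definedness step; everything after it is routine commutator-theoretic bookkeeping, and I would lean on Theorem~\ref{affine_prep} and the affineness of the $\m A|_{U_i}$ only to supply the Maltsev pieces $d_i$, not for the global structure.
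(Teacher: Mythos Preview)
Your proposal has a real gap, and it is not the well-definedness issue you flag. Granting that abelianness makes the pushed-down operation $m$ well defined (it does, by the term condition), the resulting $m$ is \emph{not a polynomial of $\m A$}. You have a $3k$-ary polynomial $d(\bar x,\bar y,\bar z)=t\bigl(d_1(x_1,y_1,z_1),\ldots,d_k(x_k,y_k,z_k)\bigr)$; to evaluate your $m(a,b,c)$ you must first lift $a,b,c$ through the surjection $t\colon U_1\times\cdots\times U_k\to A$. Unless that lifting is itself given by polynomials of $\m A$---which is the strictly stronger ``covering'' condition of equation~(\ref{decomposition}), not the ``spread'' hypothesis assumed here---the operation $m$ is merely a function on $A$, not a polynomial. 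So you cannot invoke Herrmann's theorem or any result of the form ``abelian with a Maltsev polynomial implies affine.'' Note too that the conclusion of (1) is only that a \emph{compatible} group operation exists: compatible means every operation of $\m A$ is affine over $+$, not that $x-y+z$ is a polynomial of $\m A$. If your argument worked it would already prove $\m A$ affine under hypothesis~(1), erasing the distinction the statement draws between (1) and~(2).

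The paper's proof handles this non-polynomiality head on. For (1) it defines $a+b$ via representatives just as you would, but never claims $+$ is a polynomial; instead it proves compatibility by showing directly that the congruence $\Delta$ of $\m A^2$ generated by the diagonal satisfies $(a,b)\equiv_\Delta(c,d)\iff a+d=b+c$, where the implication from left to right is the one that needs abelianness of $\m A^2/\Delta$ (this is exactly where the hypothesis on $\Ho(\m A^2)$ is spent). The ``moreover'' clause is not automatic either; it uses abelianness of $\m A/\alpha$ in a separate short computation. For (2) the paper does not ``upgrade well-definedness to a term'' by any uniformity argument; it invokes the Kiss--Valeriote theorem that every algebra in an abelian variety is Hamiltonian, realizes the $3$-generated free algebra of $\vr V(\m A)$ as a subalgebra $F$ of a finite power $\m A^n$, and uses Hamiltonianness together with part~(1) to conclude that $F$ is a coset of $(A^n,+)$, whence $x-y+z\in F$ and is therefore a term of~$\m A$.
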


(This establishes for algebras generating abelian varieties
the extra implication (iii)$\Rightarrow$(i) among the
conditions of Theorem~\ref{basic}).

\begin{proof}
  For each $1\le i\le n$ choose and fix an element $0_i\in
  U_i$, a binary polynomial $+_i$ and a unary polynomial
  $-_i$ of $\m A|_{U_i}$ such that $(U_i,+_i,-_i,0_i)$ is an
  abelian group.

  To define the operation $+$ on $A$ let $a,b\in A$. Then
  $a=t(\wec{a})$ and $b=t(\wec{b})$ for some $a_i,b_i\in
  U_i$. Define
  \[
  a+b=t(a_1 +_1 b_1,\ldots, a_k+_k b_k)\,.
  \]
  This operation is well-defined, as we now show. Assuming
  that $a=t(\wec{a'})$, we have to prove that $t(a_1 +_1
  b_1,\ldots,\allowbreak a_k+_k b_k)= t(a_1' +_1
  b_1,\ldots,\allowbreak a_k'+_k b_k)$.  This is an instance
  of the abelianness of $\m A$, because
  $t(\wec{a})=t(\wec{a'})$ implies $t(a_1 +_1 0_1,\ldots,
  a_k+_k 0_k)=t(a_1' +_1 0_1,\ldots, a_k'+_k 0_k)$. The
  argument is the same for the second variable of~$+$. It is
  clear that this operation is associative, commutative,
  $0=t(0_1,\ldots, 0_k)$ is a zero element, and
  $-t(\wec{a})=t(-_1 a_1,\ldots, -_k a_k)$ is the
  (well-defined) negative of $t(\wec{a})$.

  To prove that $+$ is compatible it is enough to show that
  the congruence~$\Delta$ of $\m A^2$ obtained by collapsing
  the diagonal satisfies $(a,b)\equiv_\Delta (c,d)
  \iff a+d=b+c$.

  Suppose first that $a+d=b+c$, we prove that
  $(a,b)\equiv_\Delta (c,d)$. Let $a=t(\wec{a})$,
  $b=t(\wec{b})$ and $c=t(\wec{c})$. Define $d_i=b_i +_i c_i
  -_i a_i$, clearly $d=t(\wec{d})$. We have
  $(a_i,a_i)\equiv_\Delta (c_i,c_i)$, and using the unary
  polynomial of $\m A^2$ that maps $(x,y)$ to $(x+_i
  0_i,y+_i b_i -_i a_i)$ we get that
  $(a_i,b_i)\equiv_\Delta (c_i,d_i)$. Applying $t$ we see
  that $(a,b)\equiv_\Delta (c,d)$ indeed.

  So far, we have only used that $\m A$ is abelian, now we
  need that $\m A^2/\Delta$ is abelian, too. Suppose that
  $(a,b)\equiv_\Delta (c,d)$. Let $a=t(\wec{a})$,
  $b=t(\wec{b})$, $c=t(\wec{c})$ and $d=t(\wec{d})$. We have
  that
  \[
  \begin{pmatrix}
  t(a_1+_1 0_1,\ldots, a_k+_k 0_k)\\
  t(b_1+_1 0_1,\ldots, b_k+_k 0_k)
  \end{pmatrix}
  \equiv_\Delta 
  \begin{pmatrix}
  t(c_1+_1 0_1,\ldots, c_k+_k 0_k)\\
  t(d_1+_1 0_1,\ldots, d_k+_k 0_k)
  \end{pmatrix}\,.
  \]
  Since $\m A^2/\Delta$ is abelian, we can replace each
  $0_i$ in the second row by $a_i-_i b_i$. Therefore
  \[
  \begin{pmatrix}
  t(a_1,\ldots, a_k)\\
  t(a_1,\ldots, a_k)
  \end{pmatrix}
  \equiv_\Delta 
  \begin{pmatrix}
  t(c_1,\ldots, c_k)\\
  t(d_1+_1 a_1-_k b_1,\ldots, d_k+_k a_k -_k b_k)
  \end{pmatrix}\,.
  \]
  That is, $(a,a)\equiv_\Delta (c,d+a-b)$. The abelianness
  of $\m A$ implies that the diagonal is a $\Delta$-class,
  hence $c=d+a-b$. Thus $+$ is indeed a compatible
  operation.

  To prove that every congruence $\alpha$ of $\m A$ is a
  group-congruence it is sufficient to verify that if
  $(a,b)\in\alpha$, then $(a+c,b+c)\in\alpha$. Again, let
  $a=t(\wec{a})$, $b=t(\wec{b})$ and
  $c=t(\wec{c})$. Then
  \[
  t(a_1+_1 0_1,\ldots, a_k+_k 0_k)
  \equiv_\alpha 
  t(b_1+_1 0_1,\ldots, b_k+_k 0_k)\,.
  \]
  Since $\m A/\alpha$ is abelian, we can move each $0_i$ to
  $c_i$, proving~(1).

  Now suppose that ${\mathcal V}(\m A)$ is abelian. Recall
  that an algebra is called \textsl{Hamiltonian,} if every
  subalgebra is a block of some congruence. By the main
  result of~\cite{kiss-val}, every member of ${\mathcal
    V}(\m A)$ is Hamiltonian. Notice that $A^n=\hat
  t(U_1^n,\ldots,U_k^n)$ holds in $\m A^n$, where $\hat t$
  is $t$ acting componentwise. Hence (1) applies to all
  powers of $\m A$, and the compatible group operation
  on~$\m A^n$ is the operation of the group
  $(A,+)^n$. Construct the free algebra $\m F$ of ${\mathcal
    V}(\m A)$ generated by $x$, $y$ and $z$ as a subalgebra
  of $\m A^n$, where $n=|A|^3$. By the Hamiltonian property,
  there exists a congruence $\alpha$ of $\m A^n$ such that
  $F$ is a class of $\alpha$. Item (1) of the theorem
  implies that $\alpha$ is a group congruence, so $F$ is a
  coset modulo a subgroup, hence $x-y+z\in F$. Therefore
  $x-y+z$ is a term operation of~$\m A$.
\end{proof}

We present two examples showing that no obvious weakening of
the conditions in the previous theorem is possible. 
Our starting point is the example following Corollary~4.4 of
\cite{kkv2}. 

\begin{exmp}
Let $\BZ_2$ be the two-element field, let
$A=\BZ_2^3$, denote by $+$ the (elementary abelian) group
operation on~$A$, and consider the following matrices in
$\BZ_2^{3\times 3}$:
\[
F_1=
\begin{pmatrix}
1&0&0\\
0&1&0\\
1&0&0
\end{pmatrix}\,,
\qquad
F_2=
\begin{pmatrix}
1&0&0\\
0&1&0\\
0&0&0
\end{pmatrix}\,,
\qquad
G=
\begin{pmatrix}
1&0&0\\
1&1&0\\
0&0&1
\end{pmatrix}\,.
\]
Define a binary operation $*$ on $A$ by $u*v=F_1u+F_2v$
(matrix-vector multiplication) and a unary operation
$g$ on $A$ by $g(v)=Gv+(1,0,0)^T$, so
\[
g:
\begin{pmatrix}
a\\
b\\
c
\end{pmatrix}
\mapsto
\begin{pmatrix}
a+1\\
a+b\\
c
\end{pmatrix}\,.
\]
These are affine operations, so the algebra $\m A=\langle
A;*,g\rangle$ is abelian, and $x-y+z$ is a compatible
operation. Define three subgroups of $\langle A;+\rangle$ as
follows: $B$ is the subgroup generated by $(0,0,1)^T$, $C$
is the subgroup generated by $(0,0,1)^T$ and $(0,1,0)^T$,
finally $D$ is the subgroup generated by $(0,1,0)^T$.  Let
$\beta$, $\gamma$ and $\delta$ denote the corresponding
congruences of $\langle A;+\rangle$. It is easy to check by
hand or by computer that $\m A$ has only these three
nontrivial proper congruences. We have $\beta\wedge\delta =0_A$ and
$\beta\vee\delta =\gamma$. It can be verified also that
$\Ho(\m A)$ is abelian.

The type of $\langle 0_A,\beta\rangle$ is $\utyp$, while
both $\langle\beta,\gamma\rangle$ and
$\langle\gamma,1_A\rangle$ have type~$\atyp$. These latter
quotients have the same minimal sets. There are four such
type~$\atyp$ minimal sets, one of which is
$U=\{(0,0,0)^T,(0,1,0)^T,(1,0,0)^T,(1,1,0)^T\}$. Thus the
induced algebra $\m A|_U$ is affine (since $\m A$ is
abelian). We also have that $U*U=A$, so the initial
condition in Theorem~\ref{spreadaff} is satisfied.

The algebra $\m A^2/\Delta$ is an $8$-element abelian
algebra (although it has a nonabelian quotient, so $\Ho(\m
A^2)$ is not abelian). Nevertheless, the abelianness of $\m
A^2/\Delta$ and $\Ho(\m A)$ are sufficient to make the proof
of statement~(1) of Theorem~\ref{spreadaff} work for~$\m A$
(which indeed has a compatible $+$ operation). But $\m A$ is
not affine, so we cannot drop the assumption from
statement~(2) in Theorem~\ref{spreadaff} that ${\mathcal
  V}(\m A)$ is abelian.
\end{exmp}

\begin{exmp}
  Now let $A$, $*$, $g$ be as in the preceding example, and
  let $h$ be the transposition of $A$ which switches
  $(1,0,0)^T$ and $(1,0,1)^T$ (and fixes all other elements
  of $A$).  Let $\overline{\m a}=\langle
  A;*,g,h\rangle$. This algebra has only four congruences,
  the nontrivial proper ones are $\beta$ and $\gamma$ above. The
  quotients $\langle\beta,\gamma\rangle$ and
  $\langle\gamma,1_A\rangle$ still have type~$\atyp$, and
  they have the same minimal sets, but now there are $16$ of
  them. One of these is $U$ above, so the initial condition
  in Theorem~\ref{spreadaff} is satisfied again. It is also
  true that $\Ho(\overline{\m a})$ is abelian.  However, $h$
  is not linear, and $\overline{\m a}^2/\Delta$ is a
  $5$-element nonabelian algebra. Therefore it is not
  sufficient to assume in (1) of Theorem~\ref{spreadaff}
  that $\Ho(\m A)$ is abelian.
\end{exmp}

\section{Semisimple algebras}\label{fact}

An algebra is called semisimple if it is isomorphic to a
subdirect product of simple algebras, or equivalently, if
its maximal congruences intersect to zero.

\begin{thm}\label{simplefact}
  Let $\m A$ be a finite, semisimple, solvable algebra. If
  the growth rate of~$\m A$ is linear, then $\m A$ has a
  Maltsev polynomial, and so it is affine.
\end{thm}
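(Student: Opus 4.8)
The plan is to show that a finite semisimple solvable algebra with linear growth rate is abelian with a Maltsev polynomial; affineness then follows because an abelian algebra with a Maltsev polynomial has a Maltsev term (Corollary~\ref{affine_cor}), which for an abelian algebra is equivalent to being affine. First I would analyze the structure: since $\m A$ is semisimple, its maximal congruences $\mu_1,\ldots,\mu_r$ intersect to $0_{\m A}$, so $\m A$ embeds subdirectly into $\prod \m A/\mu_i$ with each $\m A/\mu_i$ simple; and since $\m A$ is solvable, each simple quotient $\m A/\mu_i$ is abelian. Thus $\m A$ is in fact a subdirect product of simple abelian algebras, so we are in exactly the setting of part (3) of Theorem~\ref{main}, and what must be proved is precisely the implication (iv)$\Rightarrow$(i). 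By Theorem~\ref{basic} we already have (i)$\Rightarrow$(iv), so the content is the converse.

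The key idea I would pursue is to exploit the tame-congruence-theoretic consequences of having linear growth. By Corollary~\ref{abelian_cor}(1), no nontrivial power $\m A^n$ can have a nontrivial strongly abelian (even strongly rectangular) homomorphic image without forcing $d_{\m A}(n)\in 2^{\Theta(n)}$; so linear growth implies condition (vi). The simple abelian factors $\m A/\mu_i$ have type $\utyp$ or $\atyp$. A type $\utyp$ simple algebra is strongly abelian, so if some $\m A/\mu_i$ had type $\utyp$ it would be a nontrivial strongly abelian homomorphic image of $\m A$ itself, contradicting (vi). Hence every $\m A/\mu_i$ has type $\atyp$, i.e., every $\m A/\mu_i$ is a finite simple affine algebra (polynomially equivalent to a one-dimensional vector space, with a Maltsev polynomial). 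Now $\m A$ is a subdirect product of finitely many affine simple algebras, so $\HS\{\m A\}$ omits type $\utyp$; more is true, and I would argue that $\typ{\m A}\subseteq\{\atyp\}$: any prime quotient of $\Conb(\m A)$, via subdirect representation, projects onto a prime quotient in some $\m A/\mu_i$ (using that $\bigwedge\mu_i = 0$ and Lemma~\ref{proj}-style perspectivity arguments), forcing its type to be $\atyp$. So $\m A$ is a solvable algebra with $\typ = \{\atyp\}$, hence $\m A$ is abelian.

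It remains to produce a Maltsev polynomial for $\m A$ itself. Here I would use that $\m A$ is a subdirect product of the $\m A/\mu_i$, each of which has a Maltsev polynomial $m_i(x,y,z)$ coming from its affine structure. The obstacle — and I expect this to be the main difficulty — is that these Maltsev polynomials on the factors need not glue to a single polynomial of $\m A$: polynomial operations of a subdirect product are not simply tuples of polynomials of the factors. The natural tool to overcome this is again the linear growth hypothesis together with semisimplicity: one shows $\m A^2$ is finitely generated (trivially, being finite) and applies the characterization of affineness. Concretely, an abelian algebra in which the diagonal congruence $\Delta$ of $\m A^2$ is a single block — equivalently, $\m A$ has a compatible abelian group structure — is affine; and I would try to show that failure of $\m A$ to be affine yields a nontrivial strongly abelian or strongly rectangular quotient of a power of $\m A$ (via the structure of $\m A^2/\Delta$, as in the analysis in Section~\ref{abvar}), contradicting linear growth through Corollary~\ref{abelian_cor}. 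Alternatively, and perhaps more cleanly, one invokes that a finite abelian algebra that is semisimple and has linear (non-exponential) growth must have a pointed cube polynomial — this is where the hypothesis really bites — and then Corollary~\ref{affine_cor} finishes the job: an abelian algebra with a pointed cube polynomial is affine, and an affine algebra has a Maltsev term. The crux, then, is establishing that linear growth of a semisimple abelian algebra forces a pointed cube polynomial; I anticipate this step requires a careful free-spectrum or generation-count argument specific to the semisimple case, ruling out the nonabelian-variety phenomenon (Example~8.7 of \cite{strnil}) that otherwise obstructs the passage from "simple abelian" to "affine."
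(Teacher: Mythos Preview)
Your setup is correct: linear growth gives condition~(vi), which forces every maximal quotient to have type~$\atyp$, so $\m A$ is abelian and all $\langle\mu_i,1\rangle$ are type~$\atyp$. You also correctly identify the hard step as producing a Maltsev polynomial on $\m A$ itself, and you are right that the factor Maltsev polynomials need not glue.

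The gap is that neither of your two proposed routes for this step is viable as stated. Your first route (failure of affineness yields a strongly abelian quotient of a power, via $\m A^2/\Delta$) is the content of Section~\ref{abvar}, and the examples there show it is delicate even for abelian algebras with a compatible group operation; you give no indication of how semisimplicity would rescue it. Your second route (linear growth forces a pointed cube polynomial) is false for abelian algebras in general --- Example~\ref{abelian_spread_not_maltsev} is an abelian algebra with linear growth and no pointed cube polynomial --- so again semisimplicity must enter in an essential way, and ``a careful free-spectrum or generation-count argument'' is not a plan but a hope.

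The paper's argument is structurally different and constructive. It partitions the maximal congruences into equivalence classes according to which minimal sets they share, lets $\beta_j$ be the intersection of each class, and uses Corollary~\ref{tame} to show each $\langle\beta_j,1\rangle$ is tame with a fixed minimal set $U_j$; crucially, $\beta_i$ collapses $U_j$ for $i\neq j$ and restricts trivially to $U_j$. Since $\m A$ is abelian (hence left nilpotent) and satisfies~(vi), Theorem~\ref{spreadnilp} gives a polynomial $t$ with $t(U_1,\ldots,U_1,\ldots,U_m,\ldots,U_m)=A$. Then Lemma~3.8 and Corollary~3.7 of~\cite{kkv1} are used to refine this to an explicit polynomial bijection $r\colon T_1\times\cdots\times T_m\to A$, where each $T_j$ is a multitrace on which the induced algebra is a matrix power of the affine algebra $\m A|_{U_j}$ and hence carries a Maltsev polynomial $d_j$. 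The Maltsev polynomial on $\m A$ is then built componentwise through $r$. This is where semisimplicity is actually used: it is what makes the $\beta_j$ meet to zero and forces $r$ to be a bijection rather than merely a surjection. Your proposal never reaches this decomposition, and without it there is no mechanism to assemble the factor Maltsev operations into one on~$\m A$.
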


(This establishes for semisimple abelian algebras
the extra implication (iv)$\Rightarrow$(i)
among the conditions of Theorem~\ref{basic}.)

We shall need the following corollary of~Lemma~\ref{proj}.

\begin{cor}\label{tame}
  Let $\m A$ be a finite solvable algebra,
  $\eta_1,\ldots,\eta_n$ maximal congruences of~$\m A$ and
  $\eta=\bigwedge_{i=1}^n \eta_i$. Suppose that
  \begin{enumerate}
  \item For each $1\le i\le n$, the $\lb\eta_i,1\rb$-minimal
    sets are the same (that is, $M_{\m A}(\eta_i,1)=M_{\m
      A}(\eta_j,1)$ for every~$i$ and~$j$).
  \item $A/\eta$ has no nontrivial strongly abelian
    quotient algebras.
  \end{enumerate}
  Then $\lb\eta,1\rb$ is a tame quotient, and if\/
  $\theta\ge\eta$ is a coatom in $\Con(\m A)$, then $M_{\m
    A}(\theta,1)=M_{\m A}(\eta_1,1)$.
\end{cor}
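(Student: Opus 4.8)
The plan is to handle the two assertions separately. The statement about a coatom $\theta\ge\eta$ will follow from Lemma~\ref{proj} once its type is pinned down by hypothesis~(2), while the tameness of $\lb\eta,1\rb$ will be obtained by identifying its minimal sets with those of the individual quotients $\lb\eta_i,1\rb$.

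I would begin with the minimal sets. If $U=f(A)$ with $f\in\Pol_1(\m A)$ and $f(1)\not\subseteq\eta=\bigwedge_i\eta_i$, then $f(1)\not\subseteq\eta_i$ for some $i$, so $U$ contains an $\lb\eta_i,1\rb$-minimal set; and since $\eta\le\eta_i$, any polynomial $g$ with $g(1)\not\subseteq\eta_i$ also satisfies $g(1)\not\subseteq\eta$, so an $\lb\eta_i,1\rb$-minimal set cannot be a proper subset of any $\lb\eta,1\rb$-minimal set. Running this argument in both directions and invoking hypothesis~(1) (so that the family $M_{\m A}(\eta_i,1)$ is independent of $i$) gives $M_{\m A}(\eta,1)=M_{\m A}(\eta_1,1)$, and moreover every member $U$ of this common family is simultaneously an $\lb\eta_i,1\rb$-minimal set for every $i$. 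Each $\lb\eta_i,1\rb$ is a prime quotient, hence tame (\cite{hobby-mckenzie}, Chapter~2), so every datum required of a tame quotient is available for $U$ from each $\lb\eta_i,1\rb$: the members of $M_{\m A}(\eta,1)$ are pairwise polynomially isomorphic, and the centralizer identity $(\eta_i|_U:1|_U)=\eta_i|_U$ holds for all $i$; intersecting over $i$ and using $\big(\bigwedge_i\eta_i|_U:1|_U\big)=\bigwedge_i(\eta_i|_U:1|_U)$ yields $(\eta|_U:1|_U)=\eta|_U$. Together with the corresponding statement about traces (a $\lb\eta,1\rb$-trace contained in $U$ is an $\lb\eta_i,1\rb$-trace for a suitable $i$), this is what is needed to conclude that $\lb\eta,1\rb$ is tame.

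For the second assertion, let $\theta\ge\eta$ be a coatom of $\Con(\m A)$. Since $\m A$ is solvable, the prime quotient $\lb\theta,1\rb$ has type $\utyp$ or $\atyp$; if it had type $\utyp$, then $\m A/\theta$ would be a nontrivial simple strongly abelian quotient of $\m A/\eta$, contradicting hypothesis~(2). Hence $\lb\theta,1\rb$ has type $\atyp$. Since $\bigwedge_i\eta_i=\eta\le\theta$, Lemma~\ref{proj} applies to the covering $\lb\theta,1\rb$ and produces an $i$ and congruences $\eta_i\le\alpha\prec\beta$ with $\lb\theta,1\rb$ and $\lb\alpha,\beta\rb$ projective in $\Con(\m A)$. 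The congruence $\eta_i$ is a coatom, so $\alpha=\eta_i$ and $\beta=1$; thus $\lb\theta,1\rb$ is projective to $\lb\eta_i,1\rb$, whence $M_{\m A}(\theta,1)=M_{\m A}(\eta_i,1)$, which equals $M_{\m A}(\eta_1,1)$ by hypothesis~(1).

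The step I expect to be the main obstacle is the verification, inside the second paragraph, that a set which is an $\lb\eta_i,1\rb$-minimal set for every $i$ carries the uniform induced structure demanded by the definition of a tame quotient relative to the bottom congruence $\eta$ itself, rather than only relative to each individual $\eta_i$. The centralizer condition is taken care of by the commutator identity above; the remaining point is routine but careful bookkeeping with the trace/body decomposition of the minimal sets.
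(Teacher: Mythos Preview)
Your argument for the second assertion (about arbitrary coatoms $\theta\ge\eta$) is correct and is essentially the paper's: solvability plus hypothesis~(2) forces $\lb\theta,1\rb$ to have type~$\atyp$, and then Lemma~\ref{proj} with $\bigwedge_i\eta_i\le\theta$ makes $\lb\theta,1\rb$ projective to some $\lb\eta_i,1\rb$, since the only quotient above the coatom $\eta_i$ is $\lb\eta_i,1\rb$ itself.

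The gap is in your treatment of tameness. Tameness of $\lb\eta,1\rb$ in the sense of \cite[Definition~2.6]{hobby-mckenzie} means that restriction to some (equivalently any) $U\in M_{\m A}(\eta,1)$ is a $0{,}1$-separating map on the interval $[\eta,1]$. The data you assemble---polynomial isomorphism of minimal sets, a centralizer identity, a statement about traces---are \emph{consequences} of tameness (as in \cite[Theorem~2.8]{hobby-mckenzie}), not a characterization of it, so they do not by themselves yield the conclusion. In particular, speaking of ``$\lb\eta,1\rb$-traces'' before tameness is established is circular, and the centralizer identity $(\eta|_U:1|_U)=\eta|_U$, while true, is not what Definition~2.6 asks for.

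More importantly, you have the logical dependence between the two assertions reversed. The paper uses the coatom statement as an \emph{input} to tameness: $1$-separation on $[\eta,1]$ requires precisely that for every coatom $\theta\ge\eta$ one has $\theta|_U\ne 1|_U$, i.e., that $U$ is $\lb\theta,1\rb$-minimal, and this is supplied by Lemma~\ref{proj} together with hypothesis~(2). Your proposal handles coatoms only after claiming tameness, so the $1$-separation half is never verified. Reorganize: first observe that every upper cover $\delta$ of $\eta$ is perspective to some $\lb\eta_i,1\rb$ (this is elementary, since $\eta=\bigwedge_i\eta_i$), then use your coatom argument; together these show that a fixed $U\in M_{\m A}(\eta_1,1)$ is minimal for every bottom cover $\lb\eta,\delta\rb$ and every top cover $\lb\theta,1\rb$ in $[\eta,1]$, which is exactly $0{,}1$-separation.
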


\begin{proof}
  Clearly, for every upper cover $\delta$ of~$\eta$, the
  quotient $\lb\eta,\delta\rb$ is perspective to some
  $\lb\eta_i,1\rb$ (and so has type~$\atyp$). Condition
  $(2)$ implies that for every maximal
  congruence~$\theta\ge\eta$ the quotient $\lb\theta,1\rb$
  also has type~$\atyp$. Therefore it is projective to some
  $\lb\eta_i,1\rb$ by Lemma~\ref{proj}. Thus, if $U$~is an
  $\lb\eta_1,1\rb$-minimal set, then it is minimal with
  respect to each such quotient $\lb\eta,\delta\rb$ and
  $\lb\theta,1\rb$, and also $U\in M_{\m A}(\eta,1)$. This
  implies that restriction to~$U$ is a $0,1$-separating map,
  so by Definition~2.6 of~\cite{hobby-mckenzie}, the
  quotient $\lb\eta,1\rb$ is tame.
\end{proof}

\begin{proof}[Proof of Theorem~\ref{simplefact}]
  Our assumption that the growth rate of $\m a$ is linear
  implies, by Corollary~\ref{abelian_cor}~(1), that
  \begin{equation}\label{no_strab_q}
  \text{no power of $\m a^n$ of has a nontrivial strongly abelian 
    homomorphic image.}
  \end{equation}
  In particular, if
  $\alpha_1,\ldots,\alpha_\ell$ is the list of all maximal
  congruences of~$\m A$,
  then for each~$i$, the solvability of $\m A$ implies that  
  $\m a/\alpha_i$ is abelian, and (\ref{no_strab_q})
  implies that it is not strongly abelian.
  Therefore, $\langle \alpha_i,1\rangle$ has type $\atyp$
  for each~$i$.
  It follows also that $\m A$ is 
  abelian, since it is a subdirect product of the abelian
  algebras $\m a/\alpha_i$. 

  Call two maximal congruences
  $\alpha_i$ and~$\alpha_j$ of $\m a$ equivalent, if
  the $\lb\alpha_i,1\rb$-minimal sets and the
  $\lb\alpha_j,1\rb$-minimal sets are the same. This is an
  equivalence relation. For each equivalence class, consider
  the intersection $\beta_i$ of its members. Thus we get
  congruences $\beta_1,\ldots,\beta_m$ of $\m a$ such that their
  intersection is zero. In view of (\ref{no_strab_q}), 
  Corollary~\ref{tame} shows that
  $\lb\beta_i,1\rb$ is a tame quotient for every~$i$, and
  the set of all coatoms above~$\beta_i$ forms an
  equivalence class, for every~$i$. Denote $\m A/\beta_i$ by
  $\m B_i$. Then $\m A$ can be viewed as a subdirect
  subalgebra of $\m B_1\times\dots\times\m B_m$.

  Fix a minimal set $U_i$ corresponding to the coatoms whose
  intersection is $\beta_i$. This is a minimal set for the
  tame quotient $\lb\beta_i,1\rb$. Since~$\m A$ is abelian,
  its type~$\atyp$ minimal sets are affine and E-minimal
  (see the remark at the beginning of Section~\ref{abvar}).
  This means that if $\alpha_i$ and $\alpha_j$ are not
  equivalent, then $\alpha_i$ collapses every
  $\lb\alpha_j,1\rb$-minimal set (by which we mean that
  every $\lb\alpha_j,1\rb$-minimal set is contained in a
  single $\alpha_i$-class). Therefore if $i\ne j$, then
  $\beta_i$ collapses~$U_j$.  Hence $\beta_i$ restricts
  trivially to~$U_i$ (that is, $\beta_i|_{U_i}=0_{U_i}$),
  since $\beta_1\wedge\ldots\wedge\beta_m=0$. In fact, $U_i$
  is a $\lb\beta_i,1\rb$-trace, and therefore $U_i/\beta_i$
  is polynomially isomorphic to a vector space over a finite
  field.

  Since $\m A$ is abelian and satisfies (\ref{no_strab_q}),
  Theorem~\ref{spreadnilp} implies that $\m A$ is a spread
  of its type~$\atyp$ minimal sets.  Applying
  Lemma~\ref{proj} to $\m a$ and its maximal congruences
  $\alpha_1,\ldots,\alpha_\ell$ we see that every type
  $\atyp$ quotient of $\m a$ has the same minimal sets as
  $\langle\alpha_i,1\rangle$ for some $\alpha_i$.  In a
  spread, each set can be replaced with a polynomially
  isomorphic one, therefore we can use the representatives
  $U_1,\ldots,U_m$. Thus there exists a polynomial~$t$ such
  that
  \begin{equation}\label{fullA}
  t(U_1,\ldots,U_1,U_2,\ldots,U_2,\ldots,U_m,\ldots,U_m)=A\,.
  \end{equation}
  Write this polynomial as
  $t(\wec{x}^1,\wec{x}^2,\ldots,\wec{x}^m)$, where
  $\wec{x}^i$ is the string of variables where $U_i$ occurs.

  Let $0_i\in U_i$ be fixed arbitrarily, and substitute
  $0_i$ to every variable in $\wec{x}^i$ for~$i\ne 1$. We
  get a polynomial
  \[
  f(\wec{x}^1)=f(x_1^1,\ldots,x_n^1)=
  t(\wec{x}^1,\hat 0_2,\ldots,\hat 0_m)
  \]
  of~$\m A$. Let $T_1=f(U_1,\ldots,U_1)$.

  Each $\beta_j$ collapses~$T_1$ whenever $j\ne 1$, and
  therefore $\beta_1$ restricts trivially to~$T_1$ (using
  again that
  $\beta_1\wedge\ldots\wedge\beta_m=0$). Therefore, as $U_1$
  is a $\lb\beta_1,1\rb$-trace, for every $a\ne b\in T_1$
  there exists a unary polynomial mapping $T_1$ to~$U_1$ that
  separates $a$ and~$b$. Thus we can apply Lemma~3.8
  of~\cite{kkv1} to $S=U_1$, zero element $0_1$ and~$f$. The
  proof of this lemma shows that there exist 
  \begin{enumerate}
  \item unary polynomials $g_1,\ldots,g_k$ mapping $T_1$
    to~$U_1$, that satisfy $g_i(0_1)=0_1$, and
  \item $k$-ary polynomials $\ell_1,\ldots,\ell_n$
    satisfying $\ell_i(\hat 0_1)=0_1$ and
    $\ell_i(U_1,\ldots,U_1)\subseteq U_1$
  \end{enumerate}
  such that for $f'(\wec{y})=
  f\big(\ell_1(\wec{y}),\ldots,\ell_n(\wec{y})\big)$ we have
  \[
  g_i\big(f'(y_1,\ldots,y_k)\big) = y_i
  \]
  when each $y_1,\ldots,y_k\in U_1$ and
  \begin{equation}\label{coord}
    f'\big(g_1(x),\ldots,g_k(x)\big) = x
  \end{equation}
  for every~$x\in T_1$. Thus $f'(U_1,\ldots,U_1)=T_1$. Note that
  $f'(\hat 0_1)=f(\hat 0_1)$ holds.

  We defined polynomials $g_i=g_i^1$ and $\ell_i=\ell_i^1$
  above for the first block of variables $\wec{x}^1$ of the
  polynomial $t(\wec{x}^1,\ldots,\wec{x}^m)$. Do this in an
  analogous way for all other blocks of variables
  $\wec{x}^j$ to obtain unary polynomials~$g_i^j$ ($1\le
  i\le k_j$) and $k_j$-ary polynomials~$\ell_i^j$ ($1\le
  i\le n_j$), and the set~$T_j$. We shall substitute these
  polynomials into $t$ in the following way. Consider the
  $j$-th block $\wec{x}^j=(x_1^j,\ldots,x_{n_j}^j)$ of the
  variables of $t$. Then make the substitution
  \[
  x_i^j \to \ell_i^j\big(g_1^j(y_j),\ldots,g_{k_j}^j(y_j)\big)
  \]
  for $1\le i\le n_j$ and $1\le j\le m$. This way, we get a
  polynomial $r(y_1,\ldots,y_m)$.

  Equation~(\ref{coord}) and its analogues ensure that
  \[
  r(0_1,\ldots,0_{j-1},c,0_{j+1},\ldots,0_m)=c
  \]
  holds whenever $c\in T_j$. Thus if $c_j\in T_j$, then
  $r(c_1,\ldots,c_m)\equiv_{\beta_j}c_j$. 
  
  Let $a\in A$. By equation~(\ref{fullA}) we have $a =
  t(\wec{a}^1,\ldots,\wec{a}^m)$ for appropriate vectors
  $\wec{a}^j$ such that each component of $\wec{a}^j$ is
  in~$U_j$. Replace each $\wec{a}^j$ by $\hat 0_j$ for $j\ge
  2$, and call the resulting element~$c_1\in T_1$. Since
  each $U_j$ is contained in a $\beta_i$-class for
  every~$i\ne j$ we see that $c_1\equiv_{\beta_1} a$. Do the
  analogous substitutions for all other variables to obtain
  elements $c_i\in T_i$ such that $a\equiv_{\beta_i}
  c_i$. Then $r(c_1,\ldots,c_m)\equiv_{\beta_i}a$ for
  every~$i$, and since the $\beta_i$-s intersect to zero we
  have that $r(c_1,\ldots,c_m)=a$. 

  We show that if $c_j'\in T_j$ are such that
  $r(c_1',\ldots,c_m')=a$, then $c_j=c_j'$ for
  every~$j$. Indeed, $c_j \equiv_{\beta_j}
  a=r(c_1',\ldots,c_m')\equiv_{\beta_j}c_j'$, and $\beta_j$
  restricts trivially to~$T_j$.

  In other words, $r:T_1\times\dots\times T_m\to A$ is a
  bijection. Corollary~3.7 of~\cite{kkv1} states that $T_j$
  is the range of an idempotent polynomial~$e_j$, and the
  induced algebra on $T_j$ is isomorphic to an algebra
  polynomially equivalent to a full matrix power of $\m
  A|_{U_j}$. Therefore $T_j$ has an induced Maltsev
  polynomial $d_j$. Let
  \[
  d(x,y,z)=r(\ldots,d_j\big(e_j(x),e_j(y),e_j(z)\big),\ldots)\,.
  \]
  We prove that $d(x,x,z)=r(\ldots,e_j(z),\ldots)=z$.  Let
  $z=r(z_1,\ldots,z_m)$, where $z_j\in T_j$. Then
  $z\equiv_{\beta_j} z_j$, so $e_j(z)\equiv_{\beta_j}
  e_j(z_j)=z_j$, and so $e_j(z)=z_j$, since $\beta_j$
  restricts trivially to~$T_j$. Similarly,
  $d(x,z,z)=x$. Hence $d$ is a Maltsev polynomial of~$\m A$
  and the proof of~ Theorem~\ref{simplefact} is complete.
\end{proof}

We actually proved that $\m A$ is the full direct product of
the algebras~$\m B_i$, since $T_i$ and $B_i$ are in a
bijective correspondence via factoring modulo~$\beta_i$.

The following example shows that the direct product of two
affine algebras need not have a Maltsev polynomial, even if its growth rate
is linear, and so Theorem~\ref{simplefact} cannot be
improved to say that if $\m A/\alpha_1$ and $\m A/\alpha_2$
have Maltsev polynomials, then so does $\m A/(\alpha_1\wedge \alpha_2)$.

\begin{exmp}\label{abelian_spread_not_maltsev}
Consider $V=\BZ_2^2$ as a vector space over the two-element
field $\BZ_2^2$. Let
\[
P=
\begin{pmatrix}
1&0\\
0&0
\end{pmatrix}
\qquad\text{and}\qquad
N=
\begin{pmatrix}
0&0\\
1&0
\end{pmatrix}\,.
\]
Define an operation $f$ on $V$ by
\[
f(\wec{x},\wec{y})= P\wec{x}+N\wec{y}.
\]
Clearly, $P^2=P$, $PN=N^2=0$ and $NP=N$. Therefore
$\{0,P,N\}$ is a semigroup under matrix multiplication, and
the clone of all term operations of the algebra 
$\langle V;+,f,\wec{0}\rangle$ consists
of all functions of the form
\[
M_1\wec{x}_1+\ldots+M_n\wec{x}_n\,,
\]
where each $M_i$ is either $P$, $N$, or the identity
matrix. If one deletes $+$ from the basic operations, then
the identity matrix cannot occur as a coefficient (except
for projections), and there can be at most one instance of
$P$ and at most one instance of~$N$. It is left to the
reader to check that $f(V,V)=V$.

Next we define two algebras $\m B$ and $\m C$. The language
contains the operation symbols $+$, $\oplus$, a $4$-ary $g$,
and $0$. In the algebra~$\m B$ the underlying set is~$V$,
$+$ interprets as the usual addition, $\oplus$ as the
constant zero function, and $0$ as the zero
vector~$\wec{0}$. In the algebra $\m C$, the only difference
is that $+$ is interpreted as constant zero, and $\oplus$ as
the usual addition of~$V$. Finally,
\begin{align*}
  g^{\m B}(\wec{x},\wec{y},\wec{u},\wec{v}) &=
  P\wec{x}+N\wec{y}=f(\wec{x},\wec{y}),\\
  g^{\m C}(\wec{x},\wec{y},\wec{u},\wec{v}) &=
  P\wec{u}\oplus N\wec{v}=f(\wec{u},\wec{v})\,.
\end{align*}

Both algebras $\m b$ and $\m c$ have Maltsev terms, namely
$x-y+z$ for $\m b$ and $x\ominus y\oplus z$ for $\m c$. 
In addition, both algebras are abelian, hence affine.
Now consider the algebra 
$\m A=\m B\times \m C$. It follows that $\m a$ is also abelian.
Our goal is to show that $\m a$ has linear growth rate, 
but has no Maltsev polynomial. 

To see that the growth rate of $\m a$ is linear, observe
first that $e_B(x)=x+(\wec{0},\wec{0})$ is an idempotent
unary polynomial of $\m A$ mapping $A=B\times C$ to the
subset $U=B\times \{\wec{0}\}$. Similarly, $e_C(x)=x\oplus
(\wec{0},\wec{0})$ is an idempotent unary polynomial of $\m
A$ mapping $A$ to the subset $W=\{\wec{0}\}\times C$. The
induced algebras $\m A|_U$ and $\m A|_W$ are abelian and
have Maltsev polynomials, hence they are affine algebras.
Moreover,
\[
g^{\m A}(U,U,W,W)=A.
\]
Thus $\m a$ has linear growth rate by Theorem~\ref{affine_prep},
Lemma~\ref{spreadaffrate}
and Corollary~\ref{abelian_cor}~(2).

It remains to prove that $\m A$ does not have a Maltsev polynomial. 
Suppose that $\m a$ has a Maltsev polynomial.
Since $\m a$ is abelian, we get that $\m a$ is affine, and hence it
has a Maltsev term $M(x,y,z)$.
Since $\m b$, $\m c$ are affine, each one has a unique Maltsev term, so
$M^{\m b}(x,y,z)=x-y+z$ and $M^{\m c}(x,y,z)=x\ominus y\oplus z$.
Thus $M^{\m a}$ acts componentwise as $x-y+z$ in the $\m b$-coordinate 
and as $x\ominus y\oplus z$ in the $\m c$-coordinate.
In particular, if $\wec{v}=(0,1)^T\in V$, then
for the elements
$(\wec{v},\wec{0}),(\wec{0},\wec{0}),(\wec{0},\wec{v})$ of $A$ we get that
$M^{\m a}\bigl((\wec{v},\wec{0}),(\wec{0},\wec{0}),(\wec{0},\wec{v})\bigr)
=(\wec{v},\wec{v})$.
This implies that
$\{(\wec{v},\wec{0}),(\wec{0},\wec{0}),(\wec{0},\wec{v})\}$
is not a subuniverse of $\m a$.
On the other hand, it is not hard to check that the set 
$\{(\wec{v},\wec{0}),(\wec{0},\wec{0}),(\wec{0},\wec{v})\}$
is closed under all operations $+$, $\oplus$, $g$, $0$ of $\m a$.
Indeed, closure under $+$ and $\oplus$ follows, 
because $+$ and $\oplus$ are constant $\wec{0}$ 
in one of the components and the usual addition of $V$ in the other,
and $\{\wec{v},\wec{0}\}$ is closed under addition.
Finally, closure under $g$ follows from the fact that
we have
$f(\wec{v},\wec{v})=f(\wec{v},\wec{0})=
f(\wec{0},\wec{v})=\wec{0}$, since
$P\wec{v}=N\wec{v}=\wec{0}$. 
This shows that
$\{(\wec{v},\wec{0}),(\wec{0},\wec{0}),(\wec{0},\wec{v})\}$
is a subuniverse of $\m a$. The contradiction obtained proves that
$\m a$ has no Maltsev polynomial.
\end{exmp}

In this example, $\m A$ is abelian, $d_{\m A}(n)\in O(n)$,
and yet $\m A$ is not affine. This proves that,
for abelian algebras, 
no one of the equivalent conditions (iii), (iv), (v)
or (vi) implies any one of the equivalent conditions
(i) or (ii).

\section{Summary of results. Problems.}\label{summary_sec}

We are now in a position to prove Theorem~\ref{basic}, which
summarizes our main results.

\begin{proof}[Proof of Theorem~\ref{basic}]
[(i)$\Rightarrow$(iv)]
A Maltsev polynomial is a $3$-ary,
$0$-pointed, $2$-cube term. Hence this implication
follows from Corollary~\ref{pointed_polynomial_cor}
which implies that, if $\m a$ is an
algebra with a $0$-pointed, $k$-cube polynomial,
and $\m a^k$ is finitely generated, 
then $d_{\m a}(n)\in O(n^{k-1})$.

[(i)$\Rightarrow$(ii)]
The definition of a pointed cube 
polynomial generalizes that of a Maltsev polynomial.

[(ii)$\Rightarrow$(v)]
Theorem~\ref{pointed_polynomial} shows that
if $\m a$ is an
algebra with a $p$-pointed $k$-cube polynomial,
and $p\geq 1$, then $d_{\m a}(n)$
is bounded above by a polynomial in $n$
if $\m a^{p+k-1}$ is finitely generated.
The restriction on $\m a^{p+k-1}$ is 
satisfied when $\m a$ is finite.
This proves that (ii) implies (v) when $p\geq 1$.
The case $p=0$ is handled similarly using
Corollary~\ref{pointed_polynomial_cor}.

[(iii)$\Rightarrow$(iv)]
A binary polynomial with a unit element is 
a $2$-ary, $1$-pointed, $2$-cube polynomial.
Corollary~\ref{pointed_polynomial_cor}
implies that the growth rate of any finite
algebra that has such a polynomial 
lies in $O(n)$. The induced algebra on
an $\langle\alpha,\beta\rangle$-minimal set
of type $\atyp, \btyp, \ltyp$
or $\styp$ has such a 
binary polynomial: take $d(x,0,y)$ with $d$ a pseudo-Maltsev
polynomial and $0$ in the body
if the type is $\atyp$, and take $x\wedge y$
with $\wedge$ a pseudo-meet polynomial in the other cases.
Therefore, by Lemma~\ref{spreadaffrate} of this paper, 
$d_{\m a}(n)\in O(n)$ whenever $\m a$ is a spread
of minimal sets whose types are not $\utyp$.

[(iv)$\Rightarrow$(v)]
$O(n)\cap 2^{\Omega(n)}=\emptyset$.

[(v)$\Rightarrow$(vi)] Theorem~\ref{first_bounds} 
implies that $d_{\m a}(n)\notin 2^{\Theta(n)}$ is equivalent to
$d_{\m a}(n)\notin 2^{\Omega(n)}$ for finite algebras.
Hence (v)$\Rightarrow$(vi) is just the contrapositive of 
Corollary~\ref{abelian_cor}~(1).

[(vi)$\not\Rightarrow$(v)]
Example~5.3.5 of \cite{kksz-A}
describes finite implication 
algebras with exponential growth. These satisfy
(vi), since no nontrivial implication algebra is strongly
abelian, but do not satisfy (v).

[(ii)$\not\Rightarrow$(iv), (v)$\not\Rightarrow$(iv)]
Given $k\geq 2$, Theorem~5.3.1 of \cite{kksz-A} constructs
a finite algebra with a cube polynomial whose $d$-function
satisfies $d_{\m a}(n)\in \Theta(n^{k-1})$.
When $k=3$ one has 
$d_{\m a}(n)\notin 2^{\Omega(n)}$, yet
$d_{\m a}(n)\notin O(n)$.

[(i)$\not\Rightarrow$(iii), (iv)$\not\Rightarrow$(iii)]
If $\m a$ is a 2-element Boolean algebra, then
$d_{\m a}(n)\in O(\log(n))$ (hence $d_{\m a}(n)\in O(n)$), 
and therefore (i) and (iv) hold. 
But (iii) does not hold, since $\m a$ has no 
type $\atyp$ minimal sets.

[(iii)$\not\Rightarrow$(ii)]
Example~\ref{abelian_spread_not_maltsev}
describes 
an abelian
algebra $\m c$ that is a spread of type 
$\atyp$ minimal sets, but does not have
a Maltsev term. If $\m c$ had a pointed cube
polynomial, then by Theorem~\ref{nilpotent}
it would have Maltsev polynomial.
But it is well known that an abelian algebra
with a Maltsev polynomial has a Maltsev
term, and $\m c$ does not have such a term.

[(iv)$\not\Rightarrow$(ii)]
According to Theorem~\ref{avoid},
any function that can be 
realized as the growth rate of a finite algebra
can also be realized as the growth rate of a finite
algebra that does not have a pointed cube polynomial.
\end{proof}

Recall the six
growth-restricting conditions portrayed in this theorem:
\begin{enumerate}
\item[(i)]
$\m a$ has a Maltsev polynomial.
\item[(ii)]
$\m a$ has a pointed cube polynomial.
\item[(iii)] $\m A$ is a spread of its type~$\atyp$ minimal sets.
\item[(iv)]
$d_{\m a}(n)\in O(n)$.
\item[(v)]
$d_{\m a}(n)\notin 2^{\Omega(n)}$.
\item[(vi)]
No finite power $\m a^n$ has a nontrivial
strongly abelian homomorphic image.
\end{enumerate}

We have shown that for arbitrary finite algebras, the
following implications hold:
\begin{center}
\begin{picture}(200,70)
\setlength{\unitlength}{1mm}

\put(0,0){$(iii)$}
\put(10,0){$\Longrightarrow$}
\put(20,0){$(iv)$}
\put(30,0){$\Longrightarrow$}
\put(40.5,0){$(v)$}
\put(50,0){$\Longrightarrow$}
\put(60,0){$(vi)$.}
\put(22,10){\rotatebox[origin=c]{270}{$\Longrightarrow$}}
\put(21,20){$(i)$}
\put(30,20){$\Longrightarrow$}
\put(40,20){$(ii)$}
\put(42,10){\rotatebox[origin=c]{270}{$\Longrightarrow$}}
\end{picture}
\end{center}

If $\m a$ is a finite solvable algebra, then this can be
strengthened to
\begin{center}
\begin{picture}(200,70)
\setlength{\unitlength}{1mm}

\put(0,0){$(iii)$}
\put(10,0){$\Longrightarrow$}
\put(20,0){$(iv)$}
\put(30,0){$\Longrightarrow$}
\put(40.5,0){$(v)$}
\put(50,0){$\Longrightarrow$}
\put(60,0){$(vi)$.}
\put(10,10){\rotatebox[origin=c]{225}{$\Longrightarrow$}}
\put(21,20){$(i)$}
\put(30,20){$\Longrightarrow$}
\put(40,20){$(ii)$}
\put(30,10){\rotatebox[origin=c]{225}{$\Longrightarrow$}}
\end{picture}
\end{center}

If $\m a$ is a finite left nilpotent algebra, then 
we have established that 
\begin{center}
\begin{picture}(200,70)
\setlength{\unitlength}{1mm}
\put(0,0){$(iii)$}
\put(10,0){$\Longleftrightarrow$}
\put(20,0){$(iv)$}
\put(30,0){$\Longleftrightarrow$}
\put(40.5,0){$(v)$}
\put(50,0){$\Longleftrightarrow$}
\put(60,0){$(vi)$.}
\put(10,10){\rotatebox[origin=c]{225}{$\Longrightarrow$}}
\put(21,20){$(i)$}
\put(30,20){$\Longleftrightarrow$}
\put(40,20){$(ii)$}
\put(30,10){\rotatebox[origin=c]{225}{$\Longrightarrow$}}
\end{picture}
\end{center}

Finally, if $\m a$ is a semisimple abelian algebra or if it
generates an abelian variety, then all six conditions are
equivalent. On the other hand, we gave an example of an
abelian algebra satisfying the properties in the bottom row
but not satisfying those in the top row, so no other
implications hold for finite abelian or nilpotent algebras.

Now let us return to the ``solvability'' diagram. The
example preceding Theorem~6.10 in \cite{kkv1}, which is a
finite, solvable algebra which has no Maltsev polynomial,
but has a binary polynomial with a unit element, shows that
(ii)$\not\Rightarrow$(i) for finite solvable algebras. We
have seen that no item on the bottom row implies any item on
the top row for solvable algebras, so the true implications
yet to be discovered can only be (ii)$\Rightarrow$(iii), or
the reversal of some of the implications along the bottom
row. This suggests some problems.

\begin{prb}
  Does (ii)$\Rightarrow$(iii) hold for finite solvable
  algebras?
\end{prb}

\begin{prb}
Which of the true implications 
(iii)$\Rightarrow$(iv)$\Rightarrow$(v)$\Rightarrow$(vi)
can be reversed for finite solvable algebras?
\end{prb}

\bibliographystyle{plain}

\end{document}